\DeclareSymbolFont{extraup}{U}{zavm}{m}{n}
\DeclareMathSymbol{\varheart}{\mathalpha}{extraup}{86}
\DeclareMathSymbol{\vardiamond}{\mathalpha}{extraup}{87}
\DeclareMathSymbol{\vardiamond}{\mathalpha}{extraup}{87}
\newcommand{\commment}[1]{}
\renewcommand{\phi}{\varphi}
\renewcommand{\emptyset}{\varnothing}
\renewcommand{\epsilon}{\varepsilon}
\newcommand{\nomi}{\mathbf{i}}
\newcommand{\nomj}{\mathbf{j}}
\newcommand{\nomk}{\mathbf{k}}
\newcommand{\bigamp}{\mathop{\mbox{\Large \&}}}
\theoremstyle{plain}
\newtheorem{thm}{Theorem}
\newtheorem{theorem}{Theorem}[section]
\newtheorem{example}[theorem]{Example}
\newtheorem{proposition}[thm]{Proposition}
\newtheorem{lemma}[thm]{Lemma}
\theoremstyle{definition}
\newtheorem{definition}[thm]{Definition}
\title{Sahlqvist Correspondence Theory for Sabotage Modal Logic}
\author{Zhiguang Zhao}
\affil{\small School of Mathematics and Statistics, Taishan University, Tai'an, 271000, China}
\affil{\small zhaozhiguang23@gmail.com}
\date{}
\begin{document}
\maketitle
\begin{abstract}
\noindent

Sabotage modal logic (SML) \cite{vB05} is a kind of dynamic logic. It extends static modal logic with a dynamic modality which is interpreted as ``after deleting an arrow in the frame, the formula is true". In the present paper, we are aiming at solving an open problem stated in \cite{AuvBGr18}, namely giving a Sahlqvist-type correspondence theorem \cite{Sa75} for sabotage modal logic. In this paper, we define sabotage Sahlqvist formulas and give an algorithm $\mathsf{ALBA}^{\mathsf{SML}}$ to compute the first-order correspondents of sabotage Sahlqvist formulas. We give some remarks and future directions at the end of the paper.

{\em Keywords:} Sabotage modal logic, dynamic logic, modal logic, Sahlqvist correspondence theory, algorithmic method.

{\em Math. Subject Class.} 03B45, 03B99.
\end{abstract}

\section{Introduction}

Sabotage modal logic (SML) \cite{vB05} belongs to the class of logics collectively called dynamic logics. It extends static modal logic with a dynamic modality $\Diamondblack$ such that $\Diamondblack\phi$ is interpreted as ``after deleting an arrow in the frame, $\phi$ is true". There are several existing works on sabotage modal logic. In \cite{AuvBGr18}, a bisimulation characterization theorem as well as a tableau system were given for sabotage modal logic, \cite{LoRo03a} proved the undecidability of the satisfiability problem and gave the complexity of the model-checking problem, and \cite{LoRo03b} gave the complexity of solving sabotage game. Several similar formalisms are also investigated, such as graph modifiers logic \cite{AuBadCHe09}, swap logic \cite{ArFeHo13} and arrow update logic \cite{KoRe11}, modal logic of definable link deletion \cite{Li18}, modal logic of stepwise removal \cite{vBMiZB19}. These logics are collectively called relation changing modal logics \cite{ArFeHo15,ArFeHoMa16}.
In the present paper, we are aiming at solving an open problem stated in \cite{AuvBGr18}, namely giving a Sahlqvist-type correspondence theorem \cite{Sa75} for sabotage modal logic. We define the Sahlqvist formulas in the sabotage modal language and give the sabotage counterpart of the algorithm $\mathsf{ALBA}^{\mathsf{SML}}$ (Ackermann Lemma Based Algorithm), which is sound over Kripke frames and is successful on  sabotage Sahlqvist formulas, to show that every Sahlqvist formula in the sabotage modal language has a first-order correspondent.

The structure of the paper is as follows: in Section \ref{Sec:Prelim}, we give a brief sketch on the preliminaries of sabotage modal logic, including its syntax, semantics as well as the standard translation. In Section \ref{Sec:UC} we define sabotage Sahlqvist formulas and the algorithm $\mathsf{ALBA}^{\mathsf{SML}}$, show its soundness over Kripke frames and its success on sabotage Sahlqvist inequalities. In Section \ref{Sec:discussion} we discuss the completeness and canonicity issues and give some further directions.

\section{Preliminaries on sabotage modal logic}\label{Sec:Prelim}

In this section, we collect some preliminaries on sabotage modal logic. For further details, see \cite{AuvBGr18}.

Given a set $\mathsf{Prop}$ of propositional variables, the set of sabotage modal formulas is recursively defined as follows:
$$\phi::=\bot \mid \top \mid p\mid \neg\phi \mid (\phi\land\phi) \mid \Diamond\phi \mid \Diamondblack\phi$$
where $p\in\mathsf{Prop}$, $\Diamond$ is the alethic connective of ordinary modal logic and $\Diamondblack$ is the sabotage connective of sabotage modal logic. We will follow the standard rules for omission of the parentheses. $\lor, \to, \leftrightarrow, \Box, \blacksquare$ can be defined in the standard way. We call a formula \emph{static} if it does not contain $\Diamondblack$ or $\blacksquare$\label{page:static}. An occurence of $p$ is said to be \emph{positive} (resp.\ \emph{negative}) in $\phi$ if $p$ is under the scope of an even (resp. odd) number of negations in the original sabotage modal language. A formula $\phi$ is positive (resp.\ negative) if all occurences of propositional variables in $\phi$ are positive (resp.\ negative).\label{page:positive:negative}

For the semantics of sabotage modal logic, we use Kripke frames $\mathbb{F}=(W,R)$ and Kripke models $\mathbb{M}=(\mathbb{F}, V)$ where $V:\mathsf{Prop}\to\mathcal{P}(W)$. The satisfaction relation is defined as follows:

\begin{center}
\begin{tabular}{l c l}
$(W, R, V),w\Vdash \bot$ & : & never;\\
$(W, R, V),w\Vdash \top$ & : & always;\\
$(W, R, V),w\Vdash p$ & iff & $w\in V(p)$;\\
$(W, R, V),w\Vdash \neg\varphi$ & iff & $(W, R, V),w\nVdash\varphi$;\\
$(W, R, V),w\Vdash\varphi\land\psi$ & iff & $(W,R,V),w\Vdash \varphi$ and $(W, R, V),w\Vdash\psi$;\\
$(W, R, V),w\Vdash\Diamond\varphi$ & iff & there exists a $v\in W$ such that $(w,v)\in R$ and $(W, R, V),v\Vdash\varphi$;\\
$(W, R, V), w\Vdash\Diamondblack\phi$ & iff & there exists an edge $(w_0,w_1)\in R \mbox{ s.t.\ }(W, R\setminus\{(w_0,w_1)\}, V), w\Vdash\phi.$\\
\end{tabular}
\end{center}

Intuitively, $\Diamondblack\phi$ is true at $w$ iff there is an edge $(w_0, w_1)$ of $R$ such that after deleting this edge from $R$, the formula $\phi$ is still true at $w$. It is easy to see that the semantic clause for $\blacksquare$ is defined as follows:
$$(W, R, V), w\Vdash\blacksquare\phi \mbox{ iff for all edges }(w_0,w_1)\in R, (W, R\setminus\{(w_0,w_1)\}, V), w\Vdash\phi.$$

The standard translation of sabotage modal language into first-order logic is given as follows (notice that we need to record the edges already deleted from $R$ so that we know what edges could still be deleted):

\begin{definition}(\cite[Definition 1]{AuvBGr18})
Let $E$ be a set of pairs $(y,z)$ of variables standing for edges and let $x$ be a designated variable. The translation $ST^{E}_{x}$ is recursively defined as follows:
\begin{itemize}
\item $ST^{E}_{x}(p):=Px$;
\item $ST^{E}_{x}(\bot):=x\neq x$;
\item $ST^{E}_{x}(\neg\phi):=\neg ST^{E}_{x}(\phi)$;
\item $ST^{E}_{x}(\phi\land\psi):=ST^{E}_{x}(\phi)\land ST^{E}_{x}(\psi)$;
\item $ST^{E}_{x}(\Diamond\phi):=\exists y(Rxy\land (\bigwedge\limits_{(v,w)\in E}\neg(x=v\land y=w))\land ST^{E}_{y}(\phi))$;
\item $ST^{E}_{x}(\Diamondblack\phi):=\exists y\exists z(Ryz\land (\bigwedge\limits_{(v,w)\in E}\neg(y=v\land z=w))\land ST^{E\cup\{(y,z)\}}_{x}(\phi))$.
\end{itemize}
\end{definition}

It is proved in \cite[Theorem 1]{AuvBGr18} that this translation is correct:

\begin{theorem}
For any pointed model $(\mathbb{M},w)$ and sabotage modal formula $\phi$, 
$$(\mathbb{M},w)\Vdash\phi\mbox{ iff }\mathbb{M}\vDash ST^{\emptyset}_{x}(\phi)[w].$$
\end{theorem}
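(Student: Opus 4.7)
The theorem as stated only mentions the case $E = \varnothing$, but a direct structural induction on $\phi$ on that statement alone would fail at the $\Diamondblack$ case, because the recursive call uses $ST^{E \cup \{(y,z)\}}_{x}(\phi)$ with a nonempty set $E$. Therefore the plan is to strengthen the statement, proving it for arbitrary $E$, and then specialize to $E = \varnothing$.

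The precise strengthened claim I would prove is the following. Let $\mathbb{M} = (W,R,V)$ be a Kripke model, let $E$ be a finite set of pairs of first-order variables, let $g$ be an assignment into $W$ interpreting all variables occurring in $E$ together with $x$, and let $g(E) := \{(g(v), g(w)) : (v,w) \in E\}$. Then
\[
(W, R \setminus g(E), V),\, g(x) \Vdash \phi \quad \text{iff} \quad \mathbb{M},\, g \vDash ST^{E}_{x}(\phi).
\]
Taking $E = \varnothing$ and $g(x) = w$ recovers the theorem as stated.

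The proof proceeds by induction on the complexity of $\phi$. The base cases $p, \bot, \top$ and the Boolean cases $\neg$ and $\land$ are routine: they only consult the valuation $V$ or combine sub-translations, and the relation $R \setminus g(E)$ plays no role. For the $\Diamond\phi$ case, the existential witness $y$ must range over successors of $g(x)$ in the \emph{current} relation $R \setminus g(E)$; the conjunct $Rxy \land \bigwedge_{(v,w)\in E}\neg(x=v \land y=w)$ encodes exactly that, and the IH applied to the smaller formula $\phi$ (with the same $E$ and the extended assignment $g[y \mapsto \text{witness}]$) finishes this case. The $\Diamondblack\phi$ case is the crux: semantically, one picks an edge $(w_0,w_1)$ in the current relation $R \setminus g(E)$ and evaluates $\phi$ in the model with that further edge removed; syntactically, one picks witnesses $y,z$ satisfying $Ryz$ and $\bigwedge_{(v,w)\in E}\neg(y=v\land z=w)$ and invokes $ST^{E\cup\{(y,z)\}}_{x}$. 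Applying the induction hypothesis to $\phi$ with the enlarged bookkeeping set $E \cup \{(y,z)\}$ and the extended assignment $g[y \mapsto w_0, z \mapsto w_1]$ yields exactly the semantic clause, since $g[y \mapsto w_0, z \mapsto w_1](E \cup \{(y,z)\}) = g(E) \cup \{(w_0,w_1)\}$.

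The main obstacle—really the only non-routine point—is precisely the need to set up this stronger induction hypothesis with $E$ as a parameter and to keep the correspondence between the syntactic bookkeeping $E$ under the assignment $g$ and the semantic sequence of edge-deletions consistent across the $\Diamondblack$ step. Once that invariant is stated cleanly, every inductive case is mechanical.
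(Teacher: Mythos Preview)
Your proposal is correct. The strengthened invariant---tracking an arbitrary finite set $E$ together with an assignment $g$ and proving that $(W, R\setminus g(E), V),\, g(x)\Vdash\phi$ iff $\mathbb{M},g\vDash ST^{E}_{x}(\phi)$---is exactly what is needed to make the induction go through at the $\Diamondblack$ case, and the verification of each clause is as routine as you indicate. The only detail worth making explicit is that the bound variables $y$ (in the $\Diamond$ clause) and $y,z$ (in the $\Diamondblack$ clause) are chosen fresh, i.e.\ not already occurring in $E$ or equal to $x$; this is what guarantees $g[y\mapsto u](E)=g(E)$ and $g[y\mapsto w_0,z\mapsto w_1](E\cup\{(y,z)\})=g(E)\cup\{(w_0,w_1)\}$.

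As for comparison with the paper: the paper does not supply its own argument for this theorem. It is stated as a result quoted from \cite[Theorem~1]{AuvBGr18}, with no proof given here (and the analogous Proposition for the expanded language is likewise asserted as ``easy to see''). Your write-up is therefore not competing with an alternative proof in the paper but rather filling in the details the paper elected to import; the approach you take is the standard one for correctness of standard translations and is presumably what the cited source does as well.
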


\section{Algorithmic correspondence for sabotage modal logic}\label{Sec:UC}

In the present section, we will develop the correspondence algorithm $\mathsf{ALBA}^{\mathsf{SML}}$ for sabotage modal logic, in the style of \cite{CoGoVa06,CoPa12}. The basic idea is to use an algorithm $\mathsf{ALBA}^{\mathsf{SML}}$ to transform the modal formula $\phi(\vec p)$ into an equivalent set of pure quasi-(universally quantified) inequalities which does not contain occurrences of propositional variables, and therefore can be translated into the first-order correspondence language via the standard translation of the expanded language of sabotage modal logic (which will be defined on page \pageref{subsec:SML:expanded:lang}). 

The key ingredients of the algorithmic correspondence proof can be listed as follows:
\begin{itemize}
\item An expanded sabotage modal language as the syntax of the algorithm, as well as their interpretations in the relational semantics;
\item An algorithm $\mathsf{ALBA}$ which transforms a given sabotage modal formula $\phi(\vec p)$ into equivalent pure quasi-(universally quantified) inequalities $\mathsf{Pure}(\phi(\vec p))$;
\item A soundness proof of the algorithm;
\item A syntactically identified class of formulas on which the algorithm is successful;
\item A first-order correspondence language and first-order translation which transform pure quasi-(universally quantified) inequalities into their equivalent first-order correspondents.
\end{itemize}

In the remainder of the chapter, we will define an expanded sabotage modal language which the algorithm will manipulate (Section \ref{Subsub:expanded:language:SML}), define the first-order correspondence language of the expanded sabotage modal language and the standard translation (Section \ref{Subsub:FOL:ST}). We report on the definition of Sahlqvist inequalities (Section \ref{Subsec:Sahlqvist}), define a modified version of the algorithm $\mathsf{ALBA}^{\mathsf{SML}}$ (Section \ref{Subsec:ALBA}), and show its soundness (Section \ref{Subsec:soundness}) and success on Sahlqvist inequalities (Section \ref{Subsec:Success}).

\subsection{The expanded sabotage modal language $\mathcal{L}_{\blacksquare}^{+}$, the first-order correspondence language and the standard translation} \label{subsec:SML:expanded:lang}
\subsubsection{The expanded sabotage modal language $\mathcal{L}_{\blacksquare}^{+}$}\label{Subsub:expanded:language:SML}
In the present subsection, we give the definition of the expanded sabotage modal language $\mathcal{L}_{\blacksquare}^{+}$ and its standard translations, which will be used in the execution of the algorithm:
$$\phi::=p \mid \bot \mid \top \mid \nomi \mid \neg\phi \mid (\phi\land\phi) \mid (\phi\lor\phi) \mid (\phi\to\phi) \mid \Box\phi \mid \Diamond\phi \mid \blacksquare\phi \mid \Diamondblack\phi \mid $$
$$\Box^{S}\phi \mid \Diamond^{S}\phi \mid (\Box^{S})^{-1}\phi \mid (\Diamond^{S})^{-1}\phi \mid \mathsf{A}\phi \mid \mathsf{E}\phi \mid \forall\nomi\phi \mid \exists\nomi\phi$$

$$S::=\emptyset \mid S\cup\{(\nomi_{k0}, \nomi_{k1})\} $$

where $\nomi$ is called nominal as in hybrid logic \cite[Chapter 14]{BeBlWo06}, $\nomi_{k0}, \nomi_{k1}$ are fresh nominals not in $S$. We use the notation $\phi(\vec p)$ to indicate that the propositional variables occurring in $\phi$ are all in $\vec p$. We call a formula \emph{pure} if it does not contain propositional variables.

When interpreting the formulas in the expanded language, we assume that we start from a given pointed model $((W, R_0, V), w)$, and use $S$ to record the edges deleted from $R_0$. $\Box^{S}$, $\Diamond^{S}$ correspond to the relation $R_0\setminus\{(V(\nomi_{k0}), V(\nomi_{k1}))\mid (\nomi_{k0}, \nomi_{k1})\in S\}$ (denoted as $R_0\setminus S$), $(\Box^{S})^{-1}$, $(\Diamond^{S})^{-1}$ correspond to the relation $R_0^{-1}\setminus\{(V(\nomi_{k1}), V(\nomi_{k0}))\mid (\nomi_{k0}, \nomi_{k1})\in S\}$ (denoted as $(R_0\setminus S)^{-1}$), which intuitively means first delete the edges in S and then take the inverse relation. Unlabelled $\Box$ and $\Diamond$ correspond to the relation $R$ after certain deletions of edges. Therefore, we can say that $\Box^{S}$, $\Diamond^{S}$, $(\Box^{S})^{-1}$, $(\Diamond^{S})^{-1}$ are ``absolute connectives'' whose interpretations just depend on $R_0$ and $S$, while $\Box$ and $\Diamond$ are ``contextual connectives'' whose interpretations depend on the concrete $R$ after certain steps of deletions. For $\blacksquare$ and $\Diamondblack$, their interpretations depend on the context. $\mathsf{A}$ and $\mathsf{E}$ are global box and diamond modalities respectively, $(W, R, V),w\Vdash\forall\nomi\phi$ indicates that for all valuation variant $V^{\nomi}_{v}$ such that $V^{\nomi}_{v}$ is the same as $V$ except that $V^{\nomi}_{v}(\nomi)=\{v\}$, $(W, R, V^{\nomi}_{v}), w\Vdash\phi$, and $(W, R, V),w\Vdash\exists\nomi\phi$ is the corresponding existential statement.

For the semantics of the expanded sabotage modal language, the valuation is defined as  $V:\mathsf{Prop}\cup\mathsf{Nom}\to\mathcal{P}(W)$ similar to hybrid logic, and for the modal and dynamic connectives, the additional semantic clauses can be given as follows (notice that here $R$ is the ``actual'' accessibility relation in the model $(W, R, V)$ after some (maybe none) edges have been deleted, while $R_0$ is the ``starting accessibility'' relation when no edge has been deleted yet, and $R_0\setminus S$ is the notation for $R_0\setminus\{(V(\nomi_{k0}), V(\nomi_{k1}))\mid (\nomi_{k0}, \nomi_{k1})\in S\}$):

\begin{center}
\begin{tabular}{l c l}
$(W, R, V), w\Vdash \Box\phi$ & iff & for all $v$ s.t. $(w,v)\in R$, $(W, R, V), v\Vdash \phi$\\
$(W, R, V), w\Vdash \Diamond\phi$ & iff & there exists a $v$ s.t. $(w,v)\in R$ and $(W, R, V), v\Vdash \phi$\\
$(W, R, V), w\Vdash \blacksquare\phi$ & iff & for all edges $(w_0,w_1)\in R, (W, R\setminus\{(w_0,w_1)\}, V), w\Vdash\phi$\\
$(W, R, V), w\Vdash \Diamondblack\phi$ & iff & there exists an edge $(w_0,w_1)\in R$ s.t.\ $(W, R\setminus\{(w_0,w_1)\}, V), w\Vdash\phi$\\
$(W, R, V), w\Vdash \Box^{S}\phi$ & iff & for all $v$ s.t. $(w,v)\in(R_0\setminus S)$, $(W, R, V), v\Vdash \phi$\\
$(W, R, V), w\Vdash \Diamond^{S}\phi$ & iff & there exists a $v$ s.t.\  $(w,v)\in(R_0\setminus S)$ and $(W, R, V), v\Vdash \phi$\\
$(W, R, V), w\Vdash (\Box^{S})^{-1}\phi$ & iff & for all $v$ s.t. $(v,w)\in(R_0\setminus S)$, $(W, R, V), v\Vdash\phi$\\
$(W, R, V), w\Vdash (\Diamond^{S})^{-1}\phi$ & iff & there exists a $v$ s.t.\  $(v,w)\in(R_0\setminus S)$ and $(W, R, V), v\Vdash\phi$\\
$(W, R, V), w\Vdash \mathsf{A}\phi$ & iff & for all $v\in W$, $(W, R, V), v\Vdash\phi$\\
$(W, R, V), w\Vdash \mathsf{E}\phi$ & iff & there exists a $v\in W$ s.t.\ $(W, R, V), v\Vdash\phi$\\
$(W, R, V), w\Vdash\forall\nomi\phi$ & iff & for all $v\in W$, $(W, R, V^{\nomi}_{v}), w\Vdash\phi$\\
$(W, R, V), w\Vdash\exists\nomi\phi$ & iff & there exists a $v\in W$ s.t.\ $(W, R, V^{\nomi}_{v}), w\Vdash\phi$.\\
\end{tabular}
\end{center}

Here we do not require that each pair of nominals in $S$ denote different edges in $R_0$.

For the convenience of the algorithm, we introduce the following definitions:

\begin{definition}
$\ $
\begin{itemize}
\item An \emph{inequality} is of the form $\phi\leq^{S}_{S'}\psi$, where $\phi$ and $\psi$ are formulas, $S$ and $S'$ record the context of $\phi$ and $\psi$ respectively, i.e.\ which edges have already been deleted. Its interpretation is given as follows:
$$(W, R_0, V)\Vdash\phi\leq^{S}_{S'}\psi\mbox{ iff }$$$$(\mbox{for all }w\in W, \mbox{ if }(W, R_0\setminus S, V), w\Vdash\phi, \mbox{ then }(W, R_0\setminus S', V),w\Vdash\psi).$$

We use $\phi\leq\psi$ to denote $\phi\leq^{\emptyset}_{\emptyset}\psi$.

\item A \emph{quasi-inequality} is of the form $\phi_1\leq^{S_{1}}_{S'_{1}}\psi_1\ \&\ \ldots\ \&\ \phi_n\leq^{S_{n}}_{S'_{n}}\psi_n\ \Rightarrow\ \phi\leq^{S}_{S'}\psi$. Its interpretation is given as follows:
$$(W, R_0, V)\Vdash\phi_1\leq^{S_{1}}_{S'_{1}}\psi_1\ \&\ \ldots\ \&\ \phi_n\leq^{S_{n}}_{S'_{n}}\psi_n\ \Rightarrow\ \phi\leq^{S}_{S'}\psi\mbox{ iff }$$
$$(W, R_0, V)\Vdash\phi\leq^{S}_{S'}\psi\mbox{ holds whenever }(W, R_0, V)\Vdash\phi_i\leq^{S_i}_{S'_i}\psi_i\mbox{ for all }1\leq i\leq n.$$

\item A \emph{Mega-inequality} is defined inductively as follows:

$$\mathsf{Mega}::=\mathsf{Ineq}\mid \mathsf{Mega}\bigamp \mathsf{Mega} \mid \forall\nomi\forall\nomj(\nomi\leq^{S}_{S}\Diamond^{S}\nomj\Rightarrow \mathsf{Mega})$$

where $\mathsf{Ineq}$ is an inequality, $\bigamp$ is the meta-conjunction and $\Rightarrow$ is the meta-implication. Its interpretation is given as follows:

\begin{itemize}
\item $(W, R_0, V)\Vdash \mathsf{Ineq}$ iff the inequality holds as defined in the definition above;

\item $(W, R_0, V)\Vdash \mathsf{Mega_{1}}\bigamp\mathsf{Mega_{2}}$ iff $(W, R_0, V)\Vdash \mathsf{Mega_{1}}$ and $(W, R_0, V)\Vdash \mathsf{Mega_{2}}$;

\item $(W, R_0, V)\Vdash\forall\nomi\forall\nomj(\nomi\leq^{S}_{S}\Diamond^{S}\nomj\Rightarrow \mathsf{Mega})$ iff for all $w, v$, if $(w,v)\in (R_0\setminus S)$ then $(W,R_0,V^{\nomi,\nomj}_{w,v})\Vdash\mathsf{Mega}$, 
where $V^{\nomi,\nomj}_{w,v}$ is the same as $V$ except that $V^{\nomi,\nomj}_{w,v}(\nomi)=\{w\}$, $V^{\nomi,\nomj}_{w,v}(\nomj)=\{v\}$.

\end{itemize}
\item A \emph{universally quantified inequality} is defined as $\forall \nomi_1\ldots\forall\nomi_n(\phi\leq^{S}_{S'}\psi)$, and its interpretation is given as follows:

$(W, R_0, V)\Vdash\forall \nomi_1\ldots\forall\nomi_n(\phi\leq^{S}_{S'}\psi)$ iff for all $w_1, \ldots, w_n\in W$, $(W, R_0, V^{\nomi_1, \ldots, \nomi_n}_{w_1, \ldots, w_n})\Vdash\phi\leq^{S}_{S'}\psi$, where $V^{\nomi_1, \ldots, \nomi_n}_{w_1, \ldots, w_n}$ is the same as $V$ except that $V^{\nomi_1, \ldots, \nomi_n}_{w_1, \ldots, w_n}(\nomi_i)=\{w_i\}$, $i=1, \ldots, n$.

\item A \emph{quasi-universally quantified inequality} is defined as $\mathsf{UQIneq_1}\&\ldots\& \mathsf{UQIneq_n}\Rightarrow\mathsf{UQIneq}$ where $\mathsf{UQIneq}, \mathsf{UQIneq_i}$ are universally quantified inequalities. Its interpretation is given as follows:

$$(W, R_0, V)\Vdash\mathsf{UQIneq_1}\&\ldots\& \mathsf{UQIneq_n}\Rightarrow\mathsf{UQIneq}\mbox{ iff }$$
$$(W, R_0, V)\Vdash\mathsf{UQIneq}\mbox{ holds whenever }(W, R_0, V)\Vdash\mathsf{UQIneq_i}\mbox{ for all }1\leq i\leq n.$$

\end{itemize}
\end{definition}

It is easy to see that $(W, R_0, V)\Vdash\phi\leq^{\emptyset}_{\emptyset}\psi$ iff $(W, R_0, V)\Vdash\phi\to\psi$. We will find it easy to work with inequalities $\phi\leq\psi$ in place of implicative formulas $\phi\to\psi$ in Section \ref{Subsec:Sahlqvist}.

For inequalities, we have the following properties, which will be useful in the soundness proofs:
\begin{proposition}
\begin{itemize}
\item $(W,R_0,V)\Vdash\nomi\leq^{S}_{S}\Diamond^{S}\nomj$ iff $(V(\nomi), V(\nomj))\in (R_0\setminus S)$;
\item $(W,R_0,V)\Vdash\nomi\leq^{S}_{S'}\alpha$ iff $(W,(R_0\setminus S'),V), V(\nomi)\Vdash\alpha$, where $\alpha$ is a formula in the expanded sabotage modal language;
\item $(W,R_0,V)\Vdash A(\nomi\to\Diamond^{S}\nomj)$ iff $(V(\nomi), V(\nomj))\in (R_{0}\setminus S)$.
\end{itemize}
\end{proposition}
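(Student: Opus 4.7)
The plan is to establish each of the three equivalences by a direct unfolding of the semantic clauses for $\leq^{S}_{S'}$, $\Diamond^{S}$, and $\mathsf{A}$, exploiting two structural observations. First, since $\nomi$ and $\nomj$ are nominals, their satisfaction at a world $w$ depends only on the valuation $V$, forcing $w = V(\nomi)$ (respectively $w = V(\nomj)$) as the unique witness, and is insensitive to whichever accessibility relation appears in the evaluation triple. Second, the absolute connective $\Diamond^{S}$ always accesses successors via the fixed relation $R_{0}\setminus S$, irrespective of the contextual relation $R$ in the triple. These two observations make all three arguments essentially transparent.

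For item (1), I would first apply the definition of $\leq^{S}_{S}$ to rewrite the left-hand side as: for every $w\in W$, if $(W, R_{0}\setminus S, V), w \Vdash \nomi$ then $(W, R_{0}\setminus S, V), w \Vdash \Diamond^{S}\nomj$. The nominal-singleton observation collapses the premise to $w = V(\nomi)$, and the conclusion unfolds, via the clause for $\Diamond^{S}$, to the existence of some $v\in W$ with $(V(\nomi), v)\in R_{0}\setminus S$ and $v = V(\nomj)$. Both directions of the equivalence with $(V(\nomi), V(\nomj))\in R_{0}\setminus S$ then follow immediately.

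For item (2), I would likewise unfold $\leq^{S}_{S'}$: the statement reads, for every $w\in W$, if $(W, R_{0}\setminus S, V), w \Vdash \nomi$ then $(W, R_{0}\setminus S', V), w \Vdash \alpha$. Since nominal satisfaction is insensitive to the accessibility relation, the premise singles out $w = V(\nomi)$ uniformly in $S$, and the universal implication collapses to the single conjunct $(W, R_{0}\setminus S', V), V(\nomi)\Vdash \alpha$, as required. For item (3), the clause for $\mathsf{A}$ turns $(W, R_{0}, V)\Vdash \mathsf{A}(\nomi \to \Diamond^{S}\nomj)$ into: for every $v\in W$, $v\Vdash \nomi$ implies $v\Vdash \Diamond^{S}\nomj$, after which the nominal-pinning argument from item (1) yields $(V(\nomi), V(\nomj))\in R_{0}\setminus S$.

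I do not anticipate any substantive obstacle, since each claim is essentially a rewriting of the relevant semantic clause. The only step requiring a moment's care is keeping track of the distinction between the contextual relation $R$ appearing in an evaluation triple $(W, R, V)$ and the fixed relation $R_{0}\setminus S$ used by the absolute connective $\Diamond^{S}$, together with the reminder that the truth of a nominal does not depend on whichever of the two is currently in play.
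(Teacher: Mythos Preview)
Your proposal is correct. The paper does not supply a proof for this proposition at all; it is stated as an immediate consequence of the semantic definitions and then used later in the soundness arguments. Your direct unfolding of the clauses for $\leq^{S}_{S'}$, $\Diamond^{S}$, and $\mathsf{A}$, together with the observation that nominals pin down a unique world independently of the accessibility relation and that $\Diamond^{S}$ is absolute in $R_{0}\setminus S$, is exactly the routine verification the paper is implicitly relying on.
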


\subsubsection{The first-order correspondence language and the standard translation}\label{Subsub:FOL:ST}

In the first-order correspondence language, we have a binary relation symbol $R$ corresponding to the binary relation, a set of constant symbols $i$ corresponding to each nominal $\nomi$, a set of unary predicate symbols $P$ corresponding to each propositional variable $p$. 

The standard translation of the expanded sabotage modal language is defined as follows:
\begin{definition}
Let $E$ be a finite set of pairs $(y,z)$ of variables standing for edges and let $x$ be a designated variable. The translation $ST^{E}_{x}$ is recursively defined as follows:

\begin{itemize}
\item $ST^{E}_{x}(\bot):=x\neq x$;
\item $ST^{E}_{x}(\top):=x=x$;
\item $ST^{E}_{x}(\nomi):=x=i$;
\item $ST^{E}_{x}(p):=Px$;
\item $ST^{E}_{x}(\neg\phi):=\neg ST^{E}_{x}(\phi)$;
\item $ST^{E}_{x}(\phi\land\psi):=ST^{E}_{x}(\phi)\land ST^{E}_{x}(\psi)$;
\item $ST^{E}_{x}(\phi\lor\psi):=ST^{E}_{x}(\phi)\lor ST^{E}_{x}(\psi)$;
\item $ST^{E}_{x}(\phi\to\psi):=ST^{E}_{x}(\phi)\to ST^{E}_{x}(\psi)$;
\item $ST^{E}_{x}(\Box\phi):=\forall y(Rxy\land (\bigwedge\limits_{(v,w)\in E}\neg(x=v\land y=w))\to ST^{E}_{y}(\phi))$;
\item $ST^{E}_{x}(\Diamond\phi):=\exists y(Rxy\land (\bigwedge\limits_{(v,w)\in E}\neg(x=v\land y=w))\land ST^{E}_{y}(\phi))$;
\item $ST^{E}_{x}(\blacksquare\phi):=\forall y\forall z(Ryz\land (\bigwedge\limits_{(v,w)\in E}\neg(y=v\land z=w))\to ST^{E\cup\{(y,z)\}}_{x}(\phi))$;
\item $ST^{E}_{x}(\Diamondblack\phi):=\exists y\exists z(Ryz\land (\bigwedge\limits_{(v,w)\in E}\neg(y=v\land z=w))\land ST^{E\cup\{(y,z)\}}_{x}(\phi))$;
\item $ST^{E}_{x}(\Box^{S}\phi):=\forall y(Rxy\land (\bigwedge\limits_{(\nomi_{k0}, \nomi_{k1})\in S}\neg(x=i_{k0}\land y=i_{k1}))\to ST^{E}_{y}(\phi))$;
\item $ST^{E}_{x}(\Diamond^{S}\phi):=\exists y(Rxy\land (\bigwedge\limits_{(\nomi_{k0}, \nomi_{k1})\in S}\neg(x=i_{k0}\land y=i_{k1}))\land ST^{E}_{y}(\phi))$;
\item $ST^{E}_{x}((\Box^{S})^{-1}\phi):=\forall y(Ryx\land (\bigwedge\limits_{(\nomi_{k0}, \nomi_{k1})\in S}\neg(y=i_{k0}\land x=i_{k1}))\to ST^{E}_{y}(\phi))$;
\item $ST^{E}_{x}((\Diamond^{S})^{-1}\phi):=\exists y(Ryx\land (\bigwedge\limits_{(\nomi_{k0}, \nomi_{k1})\in S}\neg(y=i_{k0}\land x=i_{k1}))\land ST^{E}_{y}(\phi))$.
\item $ST^{E}_{x}(\mathsf{A}\phi):=\forall y ST^{E}_{y}(\phi)$;
\item $ST^{E}_{x}(\mathsf{E}\phi):=\exists y ST^{E}_{y}(\phi)$;
\item $ST^{E}_{x}(\forall\nomi\phi):=\forall i ST^{E}_{x}(\phi)$;
\item $ST^{E}_{x}(\exists\nomi\phi):=\exists i ST^{E}_{x}(\phi)$.
\end{itemize}
\end{definition}

It is easy to see that this translation is correct:

\begin{proposition}
For any pointed model $(\mathbb{M},w)$ and sabotage modal formula $\phi$, 
$$(\mathbb{M},w)\Vdash\phi\mbox{ iff }\mathbb{M}\vDash ST^{\emptyset}_{x}(\phi)[w].$$
\end{proposition}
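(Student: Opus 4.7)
The statement is the standard-translation correctness lemma, and as usual it needs to be strengthened before an induction on formula complexity will go through. The hitch is that the semantic clauses for $\blacksquare, \Diamondblack$ modify the accessibility relation as evaluation proceeds, while the translation tracks these modifications \emph{syntactically} via the auxiliary set $E$ of variable-pairs. So the proposition as stated (the case $E=\emptyset$) is not a suitable induction loading. The plan is to prove instead the following generalisation: for every formula $\phi$, every Kripke model $\mathbb{M}=(W,R_0,V)$, every assignment $g$ sending variables (including the free variables occurring in $E$ and the constants $i$ associated with nominals $\nomi$ appearing in $\phi$) to elements of $W$, every designated variable $x$, and every finite set $E$ of pairs of variables,
\[
(W, R_0\setminus g[E], V), g(x) \Vdash \phi \quad \text{iff} \quad \mathbb{M}\vDash ST^{E}_{x}(\phi)[g],
\]
where $g[E]=\{(g(v),g(w))\mid (v,w)\in E\}$. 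The proposition then follows by specialising to $E=\emptyset$.

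The induction proceeds on the complexity of $\phi$. The base cases ($\bot,\top,\nomi,p$) and the Boolean cases ($\neg,\land,\lor,\to$) are immediate from the definitions and the induction hypothesis. For the contextual modalities $\Diamond\phi$ and $\Box\phi$ the clause to match is quantification over $v$ such that $(g(x),v)\in R_0\setminus g[E]$; this is exactly what the translation encodes, since the conjunct $\bigwedge_{(v,w)\in E}\neg(x=v\land y=w)$ says precisely that $(g(x),g(y))\notin g[E]$, after which one applies the induction hypothesis to $\phi$ with the same $E$ and a suitably extended assignment. For the absolute modalities $\Box^{S},\Diamond^{S},(\Box^{S})^{-1},(\Diamond^{S})^{-1}$ the argument is analogous, with the proviso that $S$ contains nominals $\nomi_{k0},\nomi_{k1}$ whose semantic interpretation is the singleton $V(\nomi_{k0})$, etc., which matches the use of the constants $i_{k0},i_{k1}$ on the first-order side. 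The global modalities $\mathsf{A},\mathsf{E}$ and the binders $\forall\nomi,\exists\nomi$ are handled in the standard way familiar from hybrid logic.

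The only genuinely new cases are the sabotage connectives $\blacksquare\phi$ and $\Diamondblack\phi$. For $\Diamondblack\phi$, the semantic clause asks for an edge $(w_0,w_1)\in R_0\setminus g[E]$ such that $\phi$ holds at $g(x)$ in the model where this edge is further removed, i.e.\ in $(W,(R_0\setminus g[E])\setminus\{(w_0,w_1)\},V)$. Picking fresh variables $y,z$ and the extended assignment $g'=g[y\mapsto w_0,z\mapsto w_1]$, this is precisely $(W,R_0\setminus g'[E\cup\{(y,z)\}],V),g'(x)\Vdash \phi$, to which the induction hypothesis applies with the enlarged set $E\cup\{(y,z)\}$, delivering $\mathbb{M}\vDash ST^{E\cup\{(y,z)\}}_{x}(\phi)[g']$; the translation clause for $\Diamondblack$ is then exactly the existential closure over such $y,z$ with the side condition that $(y,z)\notin E$, which is what the conjunction $\bigwedge_{(v,w)\in E}\neg(y=v\land z=w)$ encodes. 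The case for $\blacksquare\phi$ is dual.

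The main obstacle, and the only subtlety worth careful bookkeeping, is maintaining the invariant between the semantic relation $R_0\setminus g[E]$ (which varies as $E$ grows during the recursion) and the syntactic encoding of the deleted edges inside $ST^{E}_{x}$; in particular, one must be careful with freshness of the variables $y,z$ introduced for $\Diamondblack$ and $\blacksquare$ so that the updated assignment $g'$ agrees with $g$ on all variables previously in $E$. Once this is set up, each case is a routine unfolding of the semantic clause against the corresponding clause of the translation.
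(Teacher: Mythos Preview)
Your proposal is correct. The paper does not actually give a proof of this proposition; it simply asserts that ``it is easy to see that this translation is correct'' and moves on. Your strengthening of the statement to arbitrary $E$ (with the semantic side evaluated in the model $(W, R_0\setminus g[E], V)$) is exactly the right induction loading, and the cases you outline are the expected ones. One small point of bookkeeping: in the expanded language the absolute connectives $\Box^{S},\Diamond^{S},(\Box^{S})^{-1},(\Diamond^{S})^{-1}$ are interpreted using the \emph{starting} relation $R_0$ rather than the current relation $R_0\setminus g[E]$, so the induction hypothesis must retain $R_0$ as a separate parameter in the background (not just the current relation); your write-up implicitly does this, but it is worth stating explicitly. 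Also, the constant symbols $i$ for nominals are interpreted by the structure (via $V$), not by the variable assignment $g$; your parenthetical suggesting otherwise is a minor slip that does not affect the argument.
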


For inequalities, quasi-inequalities, mega-inequalities, universally quantified inequalities and quasi-universally quantified inequalities, the standard translation is given in a global way:

\begin{definition}

\begin{itemize}
\item $ST(\phi\leq^{S}_{S'}\psi):=\forall x(ST^{S}_{x}(\phi)\to ST^{S'}_{x}(\psi))$;
\item $ST(\phi_1\leq^{S_{1}}_{S'_{1}}\psi_1\&\ldots\&\phi_n\leq^{S_{n}}_{S'_{n}}\psi_n\Rightarrow\phi\leq^{S}_{S'}\psi):= ST(\phi_1\leq^{S_1}_{S'_1}\psi_1)\land\ldots\land ST(\phi_n\leq^{S_n}_{S'_n}\psi_n)\to ST(\phi\leq^{S}_{S'}\psi)$;
\item $ST(\mathsf{Mega}_1\ \&\ \mathsf{Mega}_2)=ST(\mathsf{Mega}_1)\land ST(\mathsf{Mega}_2)$;
\item $ST(\forall\nomi\forall\nomj(\nomi\leq^{S}_{S}\Diamond^{S}\nomj\Rightarrow \mathsf{Mega})):=\forall i\forall j(Rij\land(\bigwedge\limits_{(v,w)\in S}\neg(i=v\land j=w))\to ST(\mathsf{Mega}))$;
\item $ST(\forall\nomi_1\ldots\forall\nomi_n\mathsf{Ineq}):=\forall i_1\ldots\forall i_n ST(\mathsf{Ineq})$;
\item $ST(\mathsf{UQIneq_1}\&\ldots\&\mathsf{UQIneq_n}\Rightarrow\mathsf{UQIneq}):= ST(\mathsf{UQIneq_1})\land\ldots\land ST(\mathsf{UQIneq_n})\to ST(\mathsf{UQIneq})$.
\end{itemize}
\end{definition}

\begin{proposition}\label{Prop:ST:ineq:quasi:mega}
For any model $\mathbb{M}$ and inequality $\mathsf{Ineq}$, quasi-inequality $\mathsf{Quasi}$, mega-inequality $\mathsf{Mega}$, universally quantified inequality $\mathsf{UQIneq}$, quasi-universally quantified inequality $\mathsf{QUQIneq}$,

$$\mathbb{M}\Vdash\mathsf{Ineq}\mbox{ iff }\mathbb{M}\vDash ST(\mathsf{Ineq});$$
$$\mathbb{M}\Vdash\mathsf{Quasi}\mbox{ iff }\mathbb{M}\vDash ST(\mathsf{Quasi});$$
$$\mathbb{M}\Vdash\mathsf{Mega}\mbox{ iff }\mathbb{M}\vDash ST(\mathsf{Mega});$$
$$\mathbb{M}\Vdash\mathsf{UQIneq}\mbox{ iff }\mathbb{M}\vDash ST(\mathsf{UQIneq});$$
$$\mathbb{M}\Vdash\mathsf{QUQIneq}\mbox{ iff }\mathbb{M}\vDash ST(\mathsf{QUQIneq}).$$
\end{proposition}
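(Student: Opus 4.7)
The plan is to prove the five equivalences by first establishing a parameterized formula-level correctness lemma for $ST^E_x$ over the full expanded language $\mathcal{L}_{\blacksquare}^{+}$, and then to lift that lemma to each kind of (in)equality by unfolding definitions together with a straightforward structural induction on the meta-logical construct.

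First I would prove the following auxiliary lemma: for every formula $\phi$ of $\mathcal{L}_{\blacksquare}^{+}$, every model $\mathbb{M}=(W,R_0,V)$, every finite set $E$ of pairs of nominals, and every $w\in W$,
\[ (W, R_0\setminus E, V), w\Vdash\phi \quad\text{iff}\quad \mathbb{M}\vDash ST^{E}_{x}(\phi)[w]. \]
The proof is by induction on $\phi$. The cases for $\bot,\top,\nomi,p$ and the Boolean connectives are immediate. The cases for $\Diamond$ and $\Box$ rely on the fact that the conjunct $\bigwedge_{(v,w)\in E}\neg(x=v\land y=w)$ captures exactly the condition $(x,y)\in R_0\setminus E$. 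The crucial inductive cases are $\blacksquare\phi$ and $\Diamondblack\phi$: here the deletion context is dynamically extended to $E\cup\{(y,z)\}$, which matches the semantic clause where the edge $(w_0,w_1)$ is removed from the current relation; one then applies the induction hypothesis at the extended set. The cases for $\Box^{S},\Diamond^{S},(\Box^{S})^{-1},(\Diamond^{S})^{-1}$ exploit the fact that the nominals appearing in $S$ have fixed singleton extensions under $V$, so the conjuncts $\neg(x=i_{k0}\land y=i_{k1})$ precisely exclude the edges listed in $S$, yielding the relation $R_0\setminus S$ (or its converse). The $\mathsf{A},\mathsf{E},\forall\nomi,\exists\nomi$ cases are routine quantifier manipulations.

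With this lemma in hand, the first equivalence of the proposition follows by unfolding: $(W,R_0,V)\Vdash\phi\leq^{S}_{S'}\psi$ says that for every $w$, $(W,R_0\setminus S,V),w\Vdash\phi$ implies $(W,R_0\setminus S',V),w\Vdash\psi$, which by the lemma is equivalent to $\mathbb{M}\vDash\forall x(ST^{S}_{x}(\phi)\to ST^{S'}_{x}(\psi))$. The remaining four equivalences then follow by induction on the meta-logical structure: the meta-connectives $\&$ and $\Rightarrow$ translate transparently to $\land$ and $\to$; the universal prefixes $\forall\nomi_{1}\cdots\forall\nomi_{n}$ translate to first-order universal quantification over the corresponding constants $i_{1},\ldots,i_{n}$; and for the mega-inequality binding $\forall\nomi\forall\nomj(\nomi\leq^{S}_{S}\Diamond^{S}\nomj\Rightarrow\mathsf{Mega})$, the edge-restriction in the antecedent is handled by invoking the preceding proposition, which identifies $(W,R_0,V)\Vdash\nomi\leq^{S}_{S}\Diamond^{S}\nomj$ with the first-order statement $Rij\land\bigwedge_{(v,w)\in S}\neg(i=v\land j=w)$.

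The main obstacle, I expect, is not any single case but the bookkeeping of two kinds of deletion contexts running in parallel: the dynamically tracked $E$ accumulated through nested $\blacksquare,\Diamondblack$ occurrences inside the formula-level translation, and the syntactically fixed sets $S,S'$ appearing as indices on inequalities and on the $\Box^{S}$-style connectives. Ensuring that each occurrence in the translation is paired with the correct context, and that the fresh nominal-edges introduced by the $\blacksquare$/$\Diamondblack$ clauses are genuinely disjoint from the existing ones in the ambient valuation, is where care is needed; once this bookkeeping is set up cleanly, the arguments collapse to routine structural inductions.
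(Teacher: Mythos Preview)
The paper does not actually supply a proof of this proposition; it is stated immediately after the relevant definitions and left as routine. Your plan is correct and is precisely the natural argument: establish a parameterized correctness lemma for $ST^{E}_{x}$ by structural induction on $\mathcal{L}_{\blacksquare}^{+}$-formulas (generalizing the preceding formula-level proposition from $E=\emptyset$ to arbitrary $E$), and then unfold the meta-definitions to lift it to the five compound expressions.

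One minor imprecision worth tightening: you phrase the auxiliary lemma with $E$ ``a finite set of pairs of nominals'', but, as you yourself observe in the $\Diamondblack/\blacksquare$ cases, the translation extends $E$ by pairs $(y,z)$ of fresh first-order \emph{variables}, not nominals. So the lemma should be stated relative to a variable assignment $g$, with $E$ allowed to mix nominal-pairs (interpreted via $V$) and variable-pairs (interpreted via $g$), and the conclusion reading $(W,R_0\setminus E^{V,g},V),g(x)\Vdash\phi$ iff $\mathbb{M}\vDash_{g}ST^{E}_{x}(\phi)$. This is exactly the bookkeeping you flag in your final paragraph; once the statement is set up this way the induction is mechanical.
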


\subsection{Sahlqvist inequalities}\label{Subsec:Sahlqvist}

In the present section, since we will use the algorithm $\mathsf{ALBA}^{\mathsf{SML}}$ which is based on the classsification of nodes in the signed generation trees of sabotage modal formulas, we will use the unified correspondence style definition (cf.\ \cite{CPSZ,CPZ:Trans,PaSoZh16}) to define Sahlqvist inequalities. We will collect all the necessary preliminaries on Sahlqvist formulas/inequalities. 

\begin{definition}[Order-type of propositional variables](cf.\ \cite[page 346]{CoPa12})
For an $n$-tuple $(p_1, \ldots, p_n)$ of propositional variables, an order-type $\epsilon$ of $(p_1, \ldots, p_n)$ is an element in $\{1,\partial\}^{n}$. In the order-type $\epsilon$, we say that $p_i$ has order-type 1 if $\epsilon_i=1$, and denote $\epsilon(p_i)=1$ or $\epsilon(i)=1$; we say that $p_i$ has order-type $\partial$ if $\epsilon_i=\partial$, and denote $\epsilon(p_i)=\partial$ or $\epsilon(i)=\partial$.
\end{definition}

\begin{definition}[Signed generation tree]\label{adef: signed gen tree}(cf.\ \cite[Definition 4]{CPZ:Trans})
The \emph{positive} (resp.\ \emph{negative}) {\em generation tree} of any given formula $\phi$ is defined by first labelling the root of the generation tree of $\phi$ with $+$ (resp.\ $-$) and then labelling the children nodes as follows:
\begin{itemize}
\item Assign the same sign to the children nodes of any node labelled with $\lor, \land, \Box$, $\Diamond$, $\blacksquare, \Diamondblack, \Box^{S}, \Diamond^{S}, (\Box^{S})^{-1}, (\Diamond^{S})^{-1},\mathsf{A},\mathsf{E},\forall\nomi, \exists\nomi$;
\item Assign the opposite sign to the child node of any node labelled with $\neg$;
\item Assign the opposite sign to the first child node and the same sign to the second child node of any node labelled with $\to$.
\end{itemize}
Nodes in signed generation trees are \emph{positive} (resp.\ \emph{negative}) if they are signed $+$ (resp.\ $-$).
\end{definition}

Signed generation trees will be used in the inequalities $\phi\leq\psi$, where the positive generation tree $+\phi$ and the negative generation tree $-\psi$ will be considered. We will also say that an inequality $\phi\leq\psi$ is \emph{uniform} in a variable $p_i$ if all occurrences of $p_i$ in $+\phi$ and $-\psi$ have the same sign, and that $\phi\leq\psi$ is $\epsilon$-\emph{uniform} in an array $\vec{p}$ if $\phi\leq\psi$ is uniform in $p_i$, occurring with the sign indicated by $\epsilon$ (i.e., $p_i$ has the sign $+$ if $\epsilon(p_i)=1$, and has the sign $-$ if $\epsilon(p_i)=\partial$), for each propositional variable $p_i$ in $\vec{p}$.

For any given formula $\phi(p_1,\ldots p_n)$, any order-type $\epsilon$ over $n$, and any $1 \leq i \leq n$, an \emph{$\epsilon$-critical node} in a signed generation tree $\ast\phi$ (where $\ast\in\{+,-\}$) is a leaf node $+p_i$ when $\epsilon_i = 1$ or $-p_i$ when $\epsilon_i = \partial$. An $\epsilon$-{\em critical branch} in a signed generation tree is a branch from an $\epsilon$-critical node. The $\epsilon$-critical occurrences are intended to be those which the algorithm $\mathsf{ALBA}^{\mathsf{SML}}$ will solve for. We say that $+\phi$ (resp.\ $-\phi$) {\em agrees with} $\epsilon$, and write $\epsilon(+\phi)$ (resp.\ $\epsilon(-\phi)$), if every leaf node in the signed generation tree of $+\phi$ (resp.\ $-\phi$) is $\epsilon$-critical.

We will also use the notation $+\psi\prec \ast \phi$ (resp.\ $-\psi\prec \ast \phi$) to indicate that an occurence of a subformula $\psi$ inherits the positive (resp.\ negative) sign from the signed generation tree $\ast \phi$, where $\ast\in\{+,-\}$. We will write $\epsilon(\gamma) \prec \ast \phi$ (resp.\ $\epsilon^\partial(\gamma) \prec \ast \phi$) to indicate that the signed generation subtree $\gamma$, with the sign inherited from $\ast \phi$, agrees with $\epsilon$ (resp.\ with $\epsilon^\partial$). We say that a propositional variable $p$ is \emph{positive} (resp.\ \emph{negative}) in $\phi$ if $+p\prec+\phi$ (resp.\ $-p\prec+\phi$).\label{page:epsilon:subtree}

\begin{definition}\label{adef:good:branches}(cf.\ \cite[Definition 5]{CPZ:Trans})
Nodes in signed generation trees are called \emph{outer nodes} and \emph{inner nodes}, according to Table \ref{aJoin:and:Meet:Friendly:Table}.

A branch in a signed generation tree is called a \emph{excellent branch} if it is the concatenation of two paths $P_1$ and $P_2$, one of which might be of length $0$, such that $P_1$ is a path from the leaf consisting (apart from variable nodes) of inner nodes only, and $P_2$ consists (apart from variable nodes) of outer nodes only.
\begin{table}
\begin{center}
\begin{tabular}{| c | c |}
\hline
Outer &Inner\\
\hline
\begin{tabular}{c c c c c c c c c c }
$+$ & $\vee$ & $\wedge$ &$\Diamond$ & $\Diamondblack$ & $\neg$\\
$-$ & $\wedge$ & $\vee$ &$\Box$ & $\blacksquare$ & $\neg$ & $\to$\\
\end{tabular}
&
\begin{tabular}{c c c c c}
$+$ &$\wedge$ &$\Box$ & $\blacksquare$ & $\neg$ \\
$-$ &$\vee$ &$\Diamond$ & $\Diamondblack$ & $\neg$ \\
\end{tabular}\\
\hline
\end{tabular}
\end{center}
\caption{Outer and Inner nodes.}\label{aJoin:and:Meet:Friendly:Table}
\vspace{-1em}
\end{table}
\end{definition}

\begin{definition}[Sahlqvist and inequalities]\label{aInducive:Ineq:Def}(cf.\ \cite[Definition 6]{CPZ:Trans})
For any order-type $\epsilon$, the signed generation tree $*\phi$ of a formula $\phi(p_1,\ldots p_n)$ is \emph{$\epsilon$-Sahlqvist} if for all $1 \leq i \leq n$, every $\epsilon$-critical branch with leaf $p_i$ is excellent. An inequality $\phi\leq\psi$ is \emph{$\epsilon$-Sahlqvist} if the signed generation trees $+\phi$ and $-\psi$ are $\epsilon$-Sahlqvist. An inequality $\phi\leq\psi$ is \emph{Sahlqvist} if it is \emph{$\epsilon$}-Sahlqvist) for some $\epsilon$.
\end{definition}

\subsection{The algorithm $\mathsf{ALBA}^{\mathsf{SML}}$ for the sabotage modal language}\label{Subsec:ALBA}

In the present section, we define the correspondence algorithm $\mathsf{ALBA}^{\mathsf{SML}}$ for sabotage modal logic, in the style of \cite{CoGoVa06,CoPa12}. The algorithm goes in four steps. 

\begin{enumerate}

\item \textbf{Preprocessing and first approximation}:

In the generation tree of $+\phi$ and $-\psi$\footnote{The discussion below relies on the definition of signed generation tree in Section \ref{Subsec:Sahlqvist}. In what follows, we identify a formula with its signed generation tree.},

\begin{enumerate}
\item Apply the distribution rules:

\begin{enumerate}
\item Push down $+\Diamond, +\Diamondblack, -\neg, +\land, -\to$ by distributing them over nodes labelled with $+\lor$ which are outer nodes, and

\item Push down $-\Box, -\blacksquare,+\neg, -\lor, -\to$ by distributing them over nodes labelled with $-\land$ which are outer nodes.

\end{enumerate}

\item Apply the splitting rules:

$$\infer{\alpha\leq\beta\ \ \ \alpha\leq\gamma}{\alpha\leq\beta\land\gamma}
\qquad
\infer{\alpha\leq\gamma\ \ \ \beta\leq\gamma}{\alpha\lor\beta\leq\gamma}
$$

\item Apply the monotone and antitone variable-elimination rules:

$$\infer{\alpha(\perp)\leq\beta(\perp)}{\alpha(p)\leq\beta(p)}
\qquad
\infer{\beta(\top)\leq\alpha(\top)}{\beta(p)\leq\alpha(p)}
$$

for $\beta(p)$ positive in $p$ and $\alpha(p)$ negative in $p$.

\end{enumerate}

We denote by $\mathsf{Preprocess}(\phi\leq\psi)$ the finite set $\{\phi_i\leq\psi_i\}_{i\in I}$ of inequalities obtained after the exhaustive application of the previous rules. Then we apply the following first approximation rule to every inequality in $\mathsf{Preprocess}(\phi\leq\psi)$:

$$\infer{\nomi_0\leq\phi_i\ \ \ \psi_i\leq \neg\nomi_1}{\phi_i\leq\psi_i}
$$

Here, $\nomi_0$ and $\nomi_1$ are special fresh nominals. Now we get a set of inequalities $\{\nomi_0\leq\phi_i, \psi_i\leq \neg\nomi_1\}_{i\in I}$.

\item \textbf{The reduction stage}:

In this stage, for each $\{\nomi_0\leq\phi_i, \psi_i\leq \neg\nomi_1\}$, we first add superscripts and subscripts $\emptyset$ to the two $\leq$s, and then apply the following rules to prepare for eliminating all the proposition variables in $\{\nomi_0\leq^{\emptyset}_{\emptyset}\phi_i, \psi_i\leq^{\emptyset}_{\emptyset} \neg\nomi_1\}$:

\begin{enumerate}

\item \textbf{Substage 1: Decomposing the outer part}

In the current substage, the following rules are applied to decompose the outer part of the Sahlqvist signed formula:

\begin{enumerate}

\item Splitting rules:

$$\infer{\alpha\leq^{S}_{S'}\beta\ \ \ \alpha\leq^{S}_{S'}\gamma}{\alpha\leq^{S}_{S'}\beta\land\gamma}
\qquad
\infer{\alpha\leq^{S}_{S'}\gamma\ \ \ \beta\leq^{S}_{S'}\gamma}{\alpha\lor\beta\leq^{S}_{S'}\gamma}
$$

\item Approximation rules:
$$\infer{\nomj\leq^{S'}_{S'}\alpha\ \ \ \nomi\leq^{S}_{S'}\Diamond^{S'} \nomj}{\nomi\leq^{S}_{S'}\Diamond\alpha}
\qquad
\infer{\alpha\leq^{S}_{S} \neg\nomj\ \ \ \Box^{S} \neg\nomj\leq^{S}_{S'}\neg \nomi}{\Box\alpha\leq^{S}_{S'}\neg \nomi}
$$

$$\infer{\nomi_{m0}\leq^{S'}_{S'}\Diamond^{S'}\nomi_{m1}\ \ \ \ \ \ \ \nomi \leq^{S}_{S'\cup\{(\nomi_{m0},\nomi_{m1})\}}\alpha}{\nomi\leq^{S}_{S'}\Diamondblack\alpha}
\qquad
\infer{\nomi_{m0}\leq^{S}_{S}\Diamond^{S}\nomi_{m1}\ \ \ \ \ \ \alpha\leq^{S\cup\{(\nomi_{m0},\nomi_{m1})\}}_{S'}\neg\nomi}{\blacksquare\alpha\leq^{S}_{S'} \neg\nomi}
$$

$$\infer{\nomj\leq^{S}_{S}\alpha\ \ \ \ \ \ \ \beta\leq^{S}_{S}\neg\nomk\ \ \ \ \ \ \ \nomj\rightarrow\neg\nomk\leq^{S}_{S'}\neg\nomi}{\alpha\rightarrow\beta\leq^{S}_{S'}\neg\nomi}
$$

The nominals introduced by the approximation rules must not occur in the system before applying the rule.

\item Residuation rules:

$$\infer{\alpha\leq^{S'}_{S}\neg\nomi}{\nomi\leq^{S}_{S'}\neg\alpha}
\qquad
\infer{\nomi\leq^{S'}_{S}\alpha}{\neg\alpha\leq^{S}_{S'}\neg\nomi}
$$

\end{enumerate}

\item \textbf{Substage 2: Decomposing the inner part}

In the current substage, the following rules are applied to decompose the inner part of the Sahlqvist signed formula:

\begin{enumerate}

\item Splitting rules:

$$\infer{\alpha\leq^{S}_{S'}\beta\ \ \ \alpha\leq^{S}_{S'}\gamma}{\alpha\leq^{S}_{S'}\beta\land\gamma}
\qquad
\infer{\alpha\leq^{S}_{S'}\gamma\ \ \ \beta\leq^{S}_{S'}\gamma}{\alpha\lor\beta\leq^{S}_{S'}\gamma}
$$

\item Residuation rules:

$$\infer{\beta\leq^{S'}_{S}\neg\alpha}{\alpha\leq^{S}_{S'}\neg\beta}
\qquad
\infer{\neg\beta\leq^{S'}_{S}\alpha}{\neg\alpha\leq^{S}_{S'}\beta}
\qquad
\infer{\alpha\leq^{S}_{S'}(\Box^{S})^{-1}\beta}{\Diamond\alpha\leq^{S}_{S'}\beta}
\qquad
\infer{(\Diamond^{S'})^{-1}\alpha\leq^{S}_{S'}\beta}{\alpha\leq^{S}_{S'}\Box\beta}
$$

$$\infer{\forall\nomi_{m0}\forall\nomi_{m1}(\nomi_{m0}\leq^{S'}_{S'}\Diamond^{S'}\nomi_{m1}\ \Rightarrow\ \alpha \leq^{S}_{S'\cup\{(\nomi_{m0},\nomi_{m1})\}} \beta)}{\alpha\leq^{S}_{S'}\blacksquare\beta}
$$

$$\infer{\forall\nomi_{m0}\forall\nomi_{m1}(\nomi_{m0}\leq^{S}_{S}\Diamond^{S}\nomi_{m1}\ \Rightarrow\ \alpha \leq^{S\cup\{(\nomi_{m0},\nomi_{m1})\}}_{S'} \beta)}{\Diamondblack\alpha\leq^{S}_{S'}\beta}
$$

The nominals introduced by the residuation rules must not occur in the system before applying the rule.

\item Second splitting rules:
$$\infer{\forall\nomi_{m0}\forall\nomi_{m1}
(\nomi_{m0}\leq^{S}_{S}\Diamond^{S}\nomi_{m1}\Rightarrow\mathsf{Mega_1})
\ \ \ 
\forall\nomi_{m0}\forall\nomi_{m1}
(\nomi_{m0}\leq^{S}_{S}\Diamond^{S}\nomi_{m1}\Rightarrow\mathsf{Mega_2})}
{\forall\nomi_{m0}\forall\nomi_{m1}
(\nomi_{m0}\leq^{S}_{S}\Diamond^{S}\nomi_{m1} \Rightarrow\ \mathsf{Mega_1} \bigamp\mathsf{Mega_2})}$$

Here $\mathsf{Mega_1}$ and $\mathsf{Mega_2}$ denote mega-inequalities.

\end{enumerate}

\item \textbf{Substage 3: Preparing for the Ackermann rules}

In this substage, we prepare for eliminating the propositional variables by the Ackermann rules, with the help of the following packing rules:\\

Packing rules:
$$\infer{\exists\nomi_{m_{0}0}\exists\nomi_{m_{0}1}\ldots\exists\nomi_{m_{k}0}\exists\nomi_{m_{k}1}(
A(\nomi_{m_{0}0}\to\Diamond^{S_{0}}\nomi_{m_{0}1})
\land
A(\nomi_{m_{k}0}\to\Diamond^{S_{k}}\nomi_{m_{k}1})
\land\alpha)\leq^{\emptyset}_{\emptyset} p}
{\forall\nomi_{m_{0}0}\forall\nomi_{m_{0}1}
(\nomi_{m_{0}0}\leq^{S_{0}}_{S_{0}}\Diamond^{S_{0}}\nomi_{m_{0}1}\Rightarrow\ldots
\forall\nomi_{m_{k}0}\forall\nomi_{m_{k}1}(\nomi_{m_{k}0}\leq^{S_{k}}_{S_{k}}\Diamond^{S_{k}}\nomi_{m_{k}1}\ \Rightarrow\ \alpha\leq^{S}_{S'} p)\ldots)}$$
where $\alpha$ is pure and does not contain contextual connectives $\Box,\Diamond,\blacksquare,\Diamondblack$;

$$\infer{p\leq^{\emptyset}_{\emptyset}
\forall\nomi_{m_{0}0}\forall\nomi_{m_{0}1}\ldots\forall\nomi_{m_{k}0}\forall\nomi_{m_{k}1}(A(\nomi_{m_{0}0}\to\Diamond^{S_{0}}\nomi_{m_{0}1})
\land
A(\nomi_{m_{k}0}\to\Diamond^{S_{k}}\nomi_{m_{k}1})
\to\beta)}
{\forall\nomi_{m_{0}0}\forall\nomi_{m_{0}1}
(\nomi_{m_{0}0}\leq^{S_{0}}_{S_{0}}\Diamond^{S_{0}}\nomi_{m_{0}1}\Rightarrow\ldots
\forall\nomi_{m_{k}0}\forall\nomi_{m_{k}1}(\nomi_{m_{k}0}\leq^{S_{k}}_{S_{k}}\Diamond^{S_{k}}\nomi_{m_{k}1}\ \Rightarrow\ p\leq^{S}_{S'}\beta)\ldots)}$$
where $\beta$ is pure and does not contain contextual connectives $\Box,\Diamond,\blacksquare,\Diamondblack$.

$$\infer{\forall\nomi_{m_{0}0}\forall\nomi_{m_{0}1}\ldots\forall\nomi_{m_{k}0}\forall\nomi_{m_{k}1}(\top\leq^{\emptyset}_{S'}
A(\nomi_{m_{0}0}\to\Diamond^{S_{0}}\nomi_{m_{0}1})
\land
A(\nomi_{m_{k}0}\to\Diamond^{S_{k}}\nomi_{m_{k}1})\land\alpha
\to\gamma)}
{\forall\nomi_{m_{0}0}\forall\nomi_{m_{0}1}
(\nomi_{m_{0}0}\leq^{S_{0}}_{S_{0}}\Diamond^{S_{0}}\nomi_{m_{0}1}\Rightarrow\ldots
\forall\nomi_{m_{k}0}\forall\nomi_{m_{k}1}(\nomi_{m_{k}0}\leq^{S_{k}}_{S_{k}}\Diamond^{S_{k}}\nomi_{m_{k}1}\ \Rightarrow\ \alpha\leq^{S}_{S'}\gamma)\ldots)}$$
where $\alpha$ is pure and does not contain contextual connectives $\Box,\Diamond,\blacksquare,\Diamondblack$.

$$\infer{\forall\nomi_{m_{0}0}\forall\nomi_{m_{0}1}\ldots\forall\nomi_{m_{k}0}\forall\nomi_{m_{k}1}(\top\leq^{\emptyset}_{S}
A(\nomi_{m_{0}0}\to\Diamond^{S_{0}}\nomi_{m_{0}1})
\land
A(\nomi_{m_{k}0}\to\Diamond^{S_{k}}\nomi_{m_{k}1})\land\gamma
\to\alpha)}
{\forall\nomi_{m_{0}0}\forall\nomi_{m_{0}1}
(\nomi_{m_{0}0}\leq^{S_{0}}_{S_{0}}\Diamond^{S_{0}}\nomi_{m_{0}1}\Rightarrow\ldots
\forall\nomi_{m_{k}0}\forall\nomi_{m_{k}1}(\nomi_{m_{k}0}\leq^{S_{k}}_{S_{k}}\Diamond^{S_{k}}\nomi_{m_{k}1}\ \Rightarrow\ \gamma\leq^{S}_{S'}\alpha)\ldots)}$$
where $\alpha$ is pure and does not contain contextual connectives $\Box,\Diamond,\blacksquare,\Diamondblack$.

\item \textbf{Substage 4: The Ackermann stage}

In this substage, we compute the minimal/maximal valuation for propositional variables and use the Ackermann rules to eliminate all the propositional variables.  These two rules are the core of $\mathsf{ALBA}$, since their application eliminates proposition variables. In fact, all the preceding steps are aimed at reaching a shape in which the rules can be applied. Notice that an important feature of these rules is that they are executed on the whole set of (universally quantified) inequalities, and not on a single inequality.\\

The right-handed Ackermann rule:

The system 
$\left\{ \begin{array}{ll}
\alpha_1\leq^{\emptyset}_{\emptyset} p \\
\vdots\\
\alpha_n\leq^{\emptyset}_{\emptyset} p \\
\forall\vec{\nomi_1}(\beta_1\leq^{T_1}_{T'_1}\gamma_1)\\
\vdots\\
\forall\vec{\nomi_m}(\beta_m\leq^{T_m}_{T'_m}\gamma_m)\\

\end{array} \right.$ 

is replaced by 
$\left\{ \begin{array}{ll}
\forall\vec{\nomi_1}(\beta_1((\alpha_1\lor\ldots\lor\alpha_n)/p)\leq^{T_1}_{T'_1}\gamma_1((\alpha_1\lor\ldots\lor\alpha_n)/p)) \\
\vdots\\
\forall\vec{\nomi_m}(\beta_m((\alpha_1\lor\ldots\lor\alpha_n)/p)\leq^{T_m}_{T'_m}\gamma_m((\alpha_1\lor\ldots\lor\alpha_n)/p)) \\

\end{array} \right.$

where:

\begin{enumerate}
\item $p, \vec{\nomi_1}, \ldots, \vec{\nomi_m}$ do not occur in $\alpha_1, \ldots, \alpha_n$;
\item Each $\beta_i$ is positive in $p$, and each $\gamma_i$ negative in $p$, for $1\leq i\leq m$;
\item Each $\alpha_i$ is pure and contains no contextual modalities $\Box, \Diamond, \blacksquare, \Diamondblack$.
\end{enumerate}

The left-handed Ackermann rule:

The system
$\left\{ \begin{array}{ll}
p\leq^{\emptyset}_{\emptyset}\alpha_1 \\
\vdots\\
p\leq^{\emptyset}_{\emptyset}\alpha_n \\
\forall\vec{\nomi_1}(\beta_1\leq^{T_1}_{T'_1}\gamma_1)\\
\vdots\\
\forall\vec{\nomi_m}(\beta_m\leq^{T_m}_{T'_m}\gamma_m)\\

\end{array} \right.$

is replaced by
$\left\{ \begin{array}{ll}
\forall\vec{\nomi_1}(\beta_1((\alpha_1\land\ldots\land\alpha_n)/p)\leq^{T_1}_{T'_1}\gamma_1((\alpha_1\land\ldots\land\alpha_n)/p)) \\
\vdots\\
\forall\vec{\nomi_m}(\beta_m((\alpha_1\land\ldots\land\alpha_n)/p)\leq^{T_m}_{T'_m}\gamma_m((\alpha_1\land\ldots\land\alpha_n)/p)) \\

\end{array} \right.$

where:
\begin{enumerate}
\item $p, \vec{\nomi_1}, \ldots, \vec{\nomi_m}$ do not occur in $\alpha_1, \ldots, \alpha_n$;
\item Each $\beta_i$ is negative in $p$, and each $\gamma_i$ positive in $p$, for $1\leq i\leq m$.
\item Each $\alpha_i$ is pure and contains no contextual modalities $\Box, \Diamond, \blacksquare, \Diamondblack$.
\end{enumerate}
\end{enumerate}

\item \textbf{Output}: If in the previous stage, for some $\{\nomi_0\leq\phi_i, \psi_i\leq \neg\nomi_1\}$, the algorithm gets stuck, i.e.\ some proposition variables cannot be eliminated by the application of the reduction rules, then the algorithm halts and output ``failure''. Otherwise, each initial tuple $\{\nomi_0\leq\phi_i, \psi_i\leq \neg\nomi_1\}$ of inequalities after the first approximation has been reduced to a set of pure (universally quantified) inequalities $\mathsf{Reduce}(\phi_i\leq\psi_i)$, and then the output is a set of quasi-(universally quantified) inequalities $\{\&\mathsf{Reduce}(\phi_i\leq\psi_i)\Rightarrow \nomi_0\leq \neg\nomi_1: \phi_i\leq\psi_i\in\mathsf{Preprocess}(\phi\leq\psi)\}$, where $\&$ is the big meta-conjunction in quasi-inequalities. Then the algorithm use the standard translation to transform the quasi-(universally quantified) inequalities into first-order formulas.
\end{enumerate}

\subsection{Soundness of $\mathsf{ALBA}^{\mathsf{SML}}$}\label{Subsec:soundness}

In the present subsection, we will prove the soundness of the algorithm $\mathsf{ALBA}^{\mathsf{SML}}$ with respect to Kripke frames. The basic proof structure is similar to \cite{CoPa12}.

\begin{theorem}[Soundness]\label{Thm:Soundness}
If $\mathsf{ALBA}^{\mathsf{SML}}$ runs successfully on $\phi\leq\psi$ and outputs $\mathsf{FO}(\phi\leq\psi)$, then for any Kripke frame $\mathbb{F}=(W,R_0)$, $$\mathbb{F}\Vdash\phi \leq \psi\mbox{ iff }\mathbb{F}\models\mathsf{FO}(\phi \leq \psi).$$
\end{theorem}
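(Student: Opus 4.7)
I plan to factor $\mathsf{ALBA}^{\mathsf{SML}}$ into its constituent rule-applications and show that each one preserves validity on Kripke frames. Writing the final output as a finite conjunction of quasi-universally-quantified inequalities $\mathsf{QUQIneq}_i$ of the form $\&\mathsf{Reduce}(\phi_i\leq\psi_i)\Rightarrow\nomi_0\leq\neg\nomi_1$, the theorem will follow from a chain of equivalences
\[
\mathbb{F}\Vdash\phi\leq\psi \iff \mathbb{F}\Vdash \mathsf{Preprocess}(\phi\leq\psi) \iff \cdots \iff \mathbb{F}\Vdash \bigwedge_i \mathsf{QUQIneq}_i \iff \mathbb{F}\models \mathsf{FO}(\phi\leq\psi),
\]
whose final step is exactly Proposition \ref{Prop:ST:ineq:quasi:mega}, with $\mathbb{F}\Vdash\cdot$ understood as validity under every admissible valuation (nominals interpreted as singletons). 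Each intermediate equivalence is a separate soundness lemma for one rule.

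\textbf{Preprocessing and first approximation.} The distribution rules for $+\Diamond,+\Diamondblack,+\land,-\Box,-\blacksquare,-\lor,-\to$ are sound because each of these connectives distributes over the relevant Boolean connective in the relational semantics; in particular $\Diamondblack(\varphi\lor\psi)\equiv\Diamondblack\varphi\lor\Diamondblack\psi$ and $\blacksquare(\varphi\land\psi)\equiv\blacksquare\varphi\land\blacksquare\psi$, since their clauses are (dually) existential over deleted edges \emph{and} in the satisfaction of the argument. Splitting rules are trivial, and monotone/antitone variable-elimination is justified by the polarity analysis: replacing $p$ by $\bot$ in a subtree positive in $p$ (respectively $\top$ in one negative in $p$) produces the validity-equivalent strongest (respectively weakest) instance. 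For the first approximation, the equivalence between $\phi_i\leq\psi_i$ and the quasi-inequality $\nomi_0\leq\phi_i\ \&\ \psi_i\leq\neg\nomi_1\Rightarrow\nomi_0\leq\neg\nomi_1$ is a contrapositive computation: its failure forces some $w$ with $w\Vdash\phi_i$ and $w\not\Vdash\psi_i$, obtained by taking $V(\nomi_0)=V(\nomi_1)=w$.

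\textbf{Reduction rules.} The splitting and classical residuation rules are sound by propositional semantics together with the observation that, under a fixed context $S'$, the modalities $\Diamond^{S'}$ and $(\Box^{S'})^{-1}$ (respectively $\Box^{S'}$ and $(\Diamond^{S'})^{-1}$) form a residuated pair along $R_0\setminus S'$. The approximation rules for $\Diamond$ and $\Box$ are sound because the minimal (maximal) valuation witnessing $\Diamond\alpha$ (respectively $\Box\alpha$) at $V(\nomi)$ under context $S'$ can be named by a fresh nominal $\nomj$. The approximation and residuation rules for $\Diamondblack$ and $\blacksquare$ similarly name a fresh pair $(\nomi_{m0},\nomi_{m1})$ for the witness (respectively generic) deleted edge, with the guard $\nomi_{m0}\leq^{S'}_{S'}\Diamond^{S'}\nomi_{m1}$ asserting that this edge truly lies in $R_0\setminus S'$ and the subscript $S'\cup\{(\nomi_{m0},\nomi_{m1})\}$ recording its subsequent deletion. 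The packing rules then regroup guarded universal quantifiers, accumulated in the mega-inequality form, into a single first-order conjunct inside the scope of the global $\mathsf{A}$, so that the ensuing Ackermann step sees a single inequality of shape $\alpha'\leq^{\emptyset}_{\emptyset}p$ (or $p\leq^{\emptyset}_{\emptyset}\beta'$) with $\alpha'$ (respectively $\beta'$) pure and free of contextual modalities.

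\textbf{Ackermann stage and main obstacle.} The core step is the Ackermann rules. Given premises $\alpha_j\leq^{\emptyset}_{\emptyset}p$ in which each $\alpha_j$ is pure and contains no contextual modalities $\Box,\Diamond,\blacksquare,\Diamondblack$, I define the minimal valuation $V^*(p):=\bigcup_j V(\alpha_j)$; the absence of contextual modalities is exactly what makes $V(\alpha_j)$ independent of the evolving sub/superscripts, so the same set can be meaningfully substituted into positions carrying different $(S,S')$-annotations. Monotonicity of each $\beta_i$ in $p$ and antitonicity of each $\gamma_i$ in $p$, read off from the signed-subtree analysis of Section \ref{Subsec:Sahlqvist}, then guarantees that the rewritten universally quantified inequalities are equivalent to the originals. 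The principal obstacle I anticipate is a careful verification that monotonicity genuinely propagates through nested \emph{contextual} $\blacksquare/\Diamondblack$, which dynamically enlarge $S$ while descending the formula: one has to check by induction on formula structure that, even as the context grows, the polarity of each occurrence of $p$ is preserved and the semantic denotation is monotone in $V(p)$ at fixed frame. Once this induction is in place, the chain of equivalences closes and the theorem follows by one final application of Proposition \ref{Prop:ST:ineq:quasi:mega}.
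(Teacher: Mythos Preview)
Your proposal is correct and follows essentially the same route as the paper: both factor the soundness theorem into a chain of frame-level equivalences, one link per stage/rule of $\mathsf{ALBA}^{\mathsf{SML}}$, terminating with Proposition~\ref{Prop:ST:ineq:quasi:mega}. The only minor slip is notational---in the residuation rule for $\Diamond$ the converse box is $(\Box^{S})^{-1}$ (keyed to the \emph{superscript} $S$, since that is the context in which $\Diamond\alpha$ is evaluated), not $(\Box^{S'})^{-1}$; and your flagged ``obstacle'' about monotonicity propagating through nested $\blacksquare/\Diamondblack$ is exactly the induction the paper invokes tacitly in the $\Leftarrow$ direction of Lemma~\ref{Lemma:Right:Ackermann}.
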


\begin{proof}
The proof goes similarly to \cite[Theorem 8.1]{CoPa12}. Let $\phi_i\leq\psi_i$, $1\leq i\leq n$ denote the inequalities produced by preprocessing $\phi\leq\psi$ after Stage 1, and $\{\nomi_{0}\leq\phi_{i}, \psi_{i}\leq\neg\nomi_{1}\}$ denote the inequalities after the first-approximation rule, $\mathsf{Reduce}(\phi_i\leq\psi_i)$ denote the set of pure (universally quantified) inequalities after Stage 2, and $\mathsf{FO}(\phi \leq \psi)$ denote the standard translation of the quasi-(universally quantified) inequalities into first-order formulas, then we have the following chain of equivalences:

It suffices to show the equivalence from (\ref{aCrct:Eqn0}) to (\ref{aCrct:Eqn4}) given below:

\begin{eqnarray}
&&\mathbb{F}\Vdash\phi\leq\psi\label{aCrct:Eqn0}\\
&&\mathbb{F}\Vdash\phi_i\leq\psi_i,\mbox{ for all }1\leq i\leq n\label{aCrct:Eqn1}\\
&&\mathbb{F}\Vdash(\nomi_{0}\leq^{\emptyset}_{\emptyset}\phi_{i}\ \&\ \psi_{i}\leq^{\emptyset}_{\emptyset}\neg\nomi_{1})\Rightarrow\nomi_{0}\leq\neg\nomi_{1} \mbox{ for all }1\leq i\leq n\label{aCrct:Eqn2}\\
&&\mathbb{F}\Vdash\mathsf{Reduce}(\phi_{i}\leq\psi_{i})\Rightarrow\nomi_{0}\leq\neg\nomi_{1} \mbox{ for all }1\leq i\leq n\label{aCrct:Eqn3}\\
&&\mathbb{F}\Vdash\mathsf{FO}(\phi\leq\psi)\label{aCrct:Eqn4}
\end{eqnarray}

\begin{itemize}
\item The equivalence between (\ref{aCrct:Eqn0}) and (\ref{aCrct:Eqn1}) follows from Proposition \ref{prop:Soundness:stage:1};
\item the equivalence between (\ref{aCrct:Eqn1}) and (\ref{aCrct:Eqn2}) follows from Proposition \ref{prop:Soundness:first:approximation};
\item the equivalence between (\ref{aCrct:Eqn2}) and (\ref{aCrct:Eqn3}) follows from Propositions \ref{Prop:Substage:1}, \ref{Prop:Substage:2}, \ref{Prop:Substage:3}, \ref{Prop:Substage:4};
\item the equivalence between (\ref{aCrct:Eqn3}) and (\ref{aCrct:Eqn4}) follows from Proposition \ref{Prop:ST:ineq:quasi:mega}.
\end{itemize}
\end{proof}

In the remainder of this subsection, we prove the soundness of the rules in Stage 1, 2 and 3.

\begin{proposition}[Soundness of the rules in Stage 1]\label{prop:Soundness:stage:1}
For the distribution rules, the splitting rules and the monotone and antitone variable-elimination rules, they are sound in both directions in $\mathbb{F}$, i.e.,\ it is sound from the premise to the conclusion and the other way round.
\end{proposition}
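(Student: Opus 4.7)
The plan is to verify each of the three rule-families independently, checking both the top-to-bottom and bottom-to-top directions under an arbitrary frame $\mathbb{F}=(W,R_0)$ and arbitrary valuation $V$. For the splitting rules, soundness is immediate from the semantic clauses of $\wedge$ and $\vee$: unfolding $(W,R_0,V)\Vdash\alpha\leq\beta\wedge\gamma$ yields ``for every $w$, $w\Vdash\alpha$ implies $w\Vdash\beta$ and $w\Vdash\gamma$'', which is patently equivalent to the conjunction of $\alpha\leq\beta$ and $\alpha\leq\gamma$; the rule for $\vee$ on the left is dual.

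For the distribution rules I would handle each semantic equivalence one by one, leaning on the fact that every connective in $\mathcal{L}_{\blacksquare}^{+}$ is either monotone or antitone in each argument. The key identities are $\Diamond(\varphi\vee\psi)\equiv\Diamond\varphi\vee\Diamond\psi$, $\Box(\varphi\wedge\psi)\equiv\Box\varphi\wedge\Box\psi$, $\Diamondblack(\varphi\vee\psi)\equiv\Diamondblack\varphi\vee\Diamondblack\psi$, $\blacksquare(\varphi\wedge\psi)\equiv\blacksquare\varphi\wedge\blacksquare\psi$, the De Morgan laws, classical lattice distributivity for $\wedge/\vee$, and the classical equivalences $(\varphi\vee\psi)\to\chi\equiv(\varphi\to\chi)\wedge(\psi\to\chi)$ and $\varphi\to(\psi\wedge\chi)\equiv(\varphi\to\psi)\wedge(\varphi\to\chi)$. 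The sabotage cases reduce to commuting the edge quantifier $\exists(w_0,w_1)\in R$ or $\forall(w_0,w_1)\in R$ with the boolean connective immediately below, which is unproblematic because the edge-deletion $R\mapsto R\setminus\{(w_0,w_1)\}$ does not depend on the boolean argument. The outer-node hypothesis from Table~\ref{aJoin:and:Meet:Friendly:Table} is exactly what guarantees that each distribution step pushes a connective through a disjunction (resp.\ conjunction) on which it is monotone, so the rewritten formula remains semantically equivalent.

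For the monotone and antitone variable-elimination rules I would invoke the standard minimal/maximal-valuation argument. A routine induction on formulas shows that positive subformulas are monotone and negative subformulas are antitone in the relevant propositional variable. The top-to-bottom direction is immediate by instantiating $V(p)=\varnothing$ (resp.\ $V(p)=W$). For the converse, given $\mathbb{F}\Vdash\alpha(\bot)\leq\beta(\bot)$, fix any $V$ and any $w$ with $w\Vdash\alpha(p)$ under $V$. Antitonicity of $\alpha$ in $p$ together with $\varnothing\subseteq V(p)$ yields $w\Vdash\alpha(p)$ under $V[p\mapsto\varnothing]$, i.e.\ $w\Vdash\alpha(\bot)$; applying the hypothesis at $V[p\mapsto\varnothing]$ gives $w\Vdash\beta(\bot)$, and monotonicity of $\beta$ then gives $w\Vdash\beta(p)$ back under $V$. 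The second rule is dual, using the maximal valuation $V[p\mapsto W]$.

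The most error-prone step is the distribution-rule case analysis: for each of $\Diamond, \Diamondblack, \Box, \blacksquare, \neg, \wedge, \vee, \to$ with a given sign one must match the shape of the rule to a valid semantic distributivity and verify that the outer/inner classification is respected. The remaining work is essentially bookkeeping, and no step relies on frame-specific assumptions, so the argument transfers uniformly across all Kripke frames as required.
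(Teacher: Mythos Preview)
Your proposal is correct and follows essentially the same route as the paper: the distribution rules are justified by the same list of semantic equivalences (including the sabotage identities $\Diamondblack(\varphi\lor\psi)\leftrightarrow\Diamondblack\varphi\lor\Diamondblack\psi$ and $\blacksquare(\varphi\land\psi)\leftrightarrow\blacksquare\varphi\land\blacksquare\psi$), the splitting rules by unfolding the semantics of $\land$ and $\lor$, and the variable-elimination rules by the minimal/maximal-valuation argument combined with monotonicity/antitonicity. The only cosmetic difference is that the paper phrases the converse direction of the elimination rule as the chain $\alpha(p)\leq\alpha(\bot)\leq\beta(\bot)\leq\beta(p)$ at the level of frame validity, whereas you argue pointwise at a fixed $w$; these are equivalent presentations of the same idea.
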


\begin{proof}
For the soundness of the distribution rules, it follows from the fact that the following equivalences are valid in $\mathbb{F}$:
\begin{itemize}
\item $\Diamond(\alpha\lor\beta)\leftrightarrow\Diamond\alpha\lor\Diamond\beta$;
\item $\Diamondblack(\alpha\lor\beta)\leftrightarrow\Diamondblack\alpha\lor\Diamondblack\beta$;
\item $\neg(\alpha\lor\beta)\leftrightarrow\neg\alpha\land\neg\beta$;
\item $(\alpha\lor\beta)\land\gamma\leftrightarrow(\alpha\land\gamma)\lor(\beta\land\gamma)$;
\item $\alpha\land(\beta\lor\gamma)\leftrightarrow(\alpha\land\beta)\lor(\alpha\land\gamma)$;
\item $((\alpha\lor\beta)\to\gamma)\leftrightarrow((\alpha\to\gamma)\land(\beta\to\gamma))$;
\item $\Box(\alpha\land\beta)\leftrightarrow\Box\alpha\land\Box\beta$;
\item $\blacksquare(\alpha\land\beta)\leftrightarrow\blacksquare\alpha\land\blacksquare\beta$;
\item $\neg(\alpha\land\beta)\leftrightarrow\neg\alpha\lor\neg\beta$;
\item $(\alpha\land\beta)\lor\gamma\leftrightarrow(\alpha\lor\gamma)\land(\beta\lor\gamma)$;
\item $\alpha\lor(\beta\land\gamma)\leftrightarrow(\alpha\lor\beta)\land(\alpha\lor\gamma)$;
\item $(\alpha\to\beta\land\gamma)\leftrightarrow(\alpha\to\beta)\land(\alpha\to\gamma)$.
\end{itemize}

For the soundness of the splitting rules, it follows from the following fact:

$$\mathbb{F}\Vdash\alpha\leq\beta\land\gamma\mbox{ iff }(\mathbb{F}\Vdash\alpha\leq\beta\mbox{ and }\mathbb{F}\Vdash\alpha\leq\gamma);$$
$$\mathbb{F}\Vdash\alpha\lor\beta\leq\gamma\mbox{ iff }(\mathbb{F}\Vdash\alpha\leq\gamma\mbox{ and }\mathbb{F}\Vdash\beta\leq\gamma).$$

For the soundness of the monotone and antitone variable elimination rules, we show the soundness for the first rule. Suppose $\alpha(p)$ is negative in $p$ and $\beta$ is positive in $p$. 

If $\mathbb{F}\Vdash\alpha(p)\leq\beta(p)$, then for all valuations $V$, $(\mathbb{F}, V)\Vdash\alpha(p)\leq\beta(p)$, thus for the valuation $V^{p}_{\emptyset}$ such that $V^{p}_{\emptyset}$ is the same as $V$ except that $V^{p}_{\emptyset}(p)=\emptyset$, $(\mathbb{F}, V^{p}_{\emptyset})\Vdash\alpha(p)\leq\beta(p)$, therefore $(\mathbb{F}, V^{p}_{\emptyset})\Vdash\alpha(\bot)\leq\beta(\bot)$, thus $(\mathbb{F}, V)\Vdash\alpha(\bot)\leq\beta(\bot)$, so $\mathbb{F}\Vdash\alpha(\bot)\leq\beta(\bot)$.

For the other direction, suppose $\mathbb{F}\vDash\alpha(\bot)\leq\beta(\bot)$, then by the fact that $\alpha(p)$ is negative in $p$ and $\beta$ is positive in $p$, we have that $\mathbb{F}\vDash\alpha(p)\leq\alpha(\bot)$ and $\mathbb{F}\vDash\beta(\bot)\leq\beta(p)$, therefore $\mathbb{F}\vDash\alpha(p)\leq\beta(p)$.

The soundness of the other rule is similar.
\end{proof}

\begin{proposition}\label{prop:Soundness:first:approximation}
(\ref{aCrct:Eqn1}) and (\ref{aCrct:Eqn2}) are equivalent, i.e.\ the first-approximation rule is sound in $\mathbb{F}$.
\end{proposition}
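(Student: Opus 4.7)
The plan is to establish each direction by a nominal-instantiation argument, using that $\nomi_0$ and $\nomi_1$ are fresh nominals (hence interpreted as singletons not occurring in $\phi_i,\psi_i$) and that the annotations $\emptyset,\emptyset$ mean the inequalities are evaluated with respect to the unmodified accessibility relation $R_0$. Since the first-approximation rule is applied independently to each $\phi_i\leq\psi_i$ produced by Stage~1, it suffices to prove, for each $i$, the equivalence
$$\mathbb{F}\Vdash \phi_i\leq\psi_i \iff \mathbb{F}\Vdash \bigl(\nomi_0\leq^{\emptyset}_{\emptyset}\phi_i\ \&\ \psi_i\leq^{\emptyset}_{\emptyset}\neg\nomi_1\bigr)\Rightarrow \nomi_0\leq\neg\nomi_1.$$

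First I would unpack both sides using the semantics from Section~\ref{Subsub:expanded:language:SML}: the left-hand side asserts that for every valuation $V$ and every $w\in W$, $(\mathbb{F},V),w\Vdash\phi_i$ implies $(\mathbb{F},V),w\Vdash\psi_i$; the right-hand side asserts that for every valuation $V$, whenever the unique element of $V(\nomi_0)$ forces $\phi_i$ and the unique element of $V(\nomi_1)$ refutes $\psi_i$, we have $V(\nomi_0)\neq V(\nomi_1)$. In particular, the annotation $\emptyset,\emptyset$ reduces $\nomi_0\leq^{\emptyset}_{\emptyset}\phi_i$ to ``$V(\nomi_0)\Vdash\phi_i$'' and $\psi_i\leq^{\emptyset}_{\emptyset}\neg\nomi_1$ to ``$V(\nomi_1)\not\Vdash\psi_i$''.

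For the forward direction I would fix any $V$ satisfying the two premises and argue by contraposition: if $V(\nomi_0)=V(\nomi_1)=w$, then the first premise yields $w\Vdash \phi_i$, hence by hypothesis $w\Vdash \psi_i$, contradicting the second premise. For the converse I would fix a $\mathsf{Prop}$-valuation $V'$ and a world $w\in W$ with $(\mathbb{F},V'),w\Vdash \phi_i$, suppose toward contradiction that $w\not\Vdash \psi_i$, and extend $V'$ to a $\mathsf{Prop}\cup\mathsf{Nom}$-valuation $V$ by setting $V(\nomi_0)=V(\nomi_1)=\{w\}$ (and arbitrarily on the remaining nominals). Freshness of $\nomi_0,\nomi_1$ ensures the truth of $\phi_i$ and $\psi_i$ at $w$ is unchanged under $V$; both premises of the quasi-inequality then hold, forcing $V(\nomi_0)\neq V(\nomi_1)$, a contradiction.

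There is no deep obstacle here: once the semantics is unwound, the argument is classical propositional manipulation. The only points requiring care are the bookkeeping ones, namely invoking the freshness of $\nomi_0,\nomi_1$ so that extending a $\mathsf{Prop}$-valuation by singleton nominals leaves $\phi_i$ and $\psi_i$ undisturbed, and observing that the $\emptyset,\emptyset$ annotation collapses the relativised satisfaction to ordinary satisfaction in $(W,R_0)$.
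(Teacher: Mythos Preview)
Your proposal is correct and follows essentially the same argument as the paper's own proof: both directions hinge on instantiating the fresh nominals $\nomi_0,\nomi_1$ at a single witnessing world $w$ and using that their freshness leaves the truth of $\phi_i,\psi_i$ unchanged. The only cosmetic differences are that you phrase the forward direction via contraposition on $V(\nomi_0)=V(\nomi_1)$ whereas the paper argues directly, and you speak of extending a $\mathsf{Prop}$-valuation rather than modifying an existing valuation at $\nomi_0,\nomi_1$.
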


\begin{proof}
(\ref{aCrct:Eqn1}) $\Rightarrow$ (\ref{aCrct:Eqn2}): Suppose $\mathbb{F}\Vdash\phi_i\leq\psi_i$. Then for any valuation $V$ on $\mathbb{F}$, if $(\mathbb{F},V)\Vdash\nomi_{0}\leq^{\emptyset}_{\emptyset}\phi_{i}$ and $(\mathbb{F},V)\Vdash\psi_{i}\leq^{\emptyset}_{\emptyset}\neg\nomi_{1}$, then $(\mathbb{F},V),V(\nomi_{0})\Vdash\phi_{i}$ and $(\mathbb{F},V),V(\nomi_{1})\nVdash\psi_{i}$, so by $\mathbb{F}\Vdash\phi_{i}\leq\psi_{i}$ we have $(\mathbb{F},V),V(\nomi_{0})\Vdash\psi_{i}$, so $\nomi_{0}\neq\nomi_{1}$, so $(\mathbb{F},V)\Vdash\nomi_{0}\leq\neg\nomi_{1}$.

(\ref{aCrct:Eqn2}) $\Rightarrow$ (\ref{aCrct:Eqn1}): Suppose $\mathbb{F}\Vdash(\nomi_{0}\leq^{\emptyset}_{\emptyset}\phi_{i}\ \&\ \psi_{i}\leq^{\emptyset}_{\emptyset}\neg\nomi_{1})\Rightarrow\nomi_{0}\leq\neg\nomi_{1}$. Then if $\mathbb{F}\nVdash\phi_i\leq\psi_i$, then there is a valuation $V$ on $\mathbb{F}$ and a $w\in W$ such that $(\mathbb{F}, V), w\Vdash\phi_i$ and $(\mathbb{F}, V), w\nVdash\psi_i$. Then by taking $V^{\nomi_{0}, \nomi_{1}}_{w, w}$ to be the valuation which is the same as $V$ except that $V^{\nomi_{0}, \nomi_{1}}_{w, w}(\nomi_{0})=V^{\nomi_{0}, \nomi_{1}}_{w, w}(\nomi_{1})=\{w\}$, then since $\nomi_{0}, \nomi_{1}$ do not occur in $\phi_{i}$ and $\psi_{i}$, we have that $(\mathbb{F}, V^{\nomi_{0}, \nomi_{1}}_{w, w}), w\Vdash\phi_i$ and $(\mathbb{F}, V^{\nomi_{0}, \nomi_{1}}_{w, w}), w\nVdash\psi_i$, therefore $(\mathbb{F}, V^{\nomi_{0}, \nomi_{1}}_{w, w})\Vdash\nomi_{0}\leq^{\emptyset}_{\emptyset}\phi_i$ and $(\mathbb{F}, V^{\nomi_{0}, \nomi_{1}}_{w, w})\Vdash\psi_i\leq^{\emptyset}_{\emptyset}\neg\nomi_{1}$, by $\mathbb{F}\Vdash(\nomi_{0}\leq^{\emptyset}_{\emptyset}\phi_{i}\ \&\ \psi_{i}\leq^{\emptyset}_{\emptyset}\neg\nomi_{1})\Rightarrow\nomi_{0}\leq\neg\nomi_{1}$, we have that $(\mathbb{F}, V^{\nomi_{0}, \nomi_{1}}_{w, w})\Vdash\nomi_{0}\leq\neg\nomi_{1}$, so $(\mathbb{F}, V^{\nomi_{0}, \nomi_{1}}_{w, w}),w\Vdash\nomi_{0}$ implies that $(\mathbb{F}, V^{\nomi_{0}, \nomi_{1}}_{w, w}),w\Vdash\neg\nomi_{1}$, a contradiction. So $\mathbb{F}\Vdash\phi_i\leq\psi_i$.
\end{proof}

The next step is to show the soundness of Stage 2, for which it suffices to show the soundness of each rule in each substage.

\begin{proposition}\label{Prop:Substage:1}
The splitting rules, the approximation rules for $\Diamond,\Box,\Diamondblack,\blacksquare,\to$, the residuation rules for $\neg$ in Substage 1 are sound in $\mathbb{F}$.
\end{proposition}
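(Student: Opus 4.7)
The plan is to verify each rule of Substage 1 individually, by unpacking the semantic clause for $\phi\leq^{S}_{S'}\psi$ given in the definition of inequality and then quantifying over all valuations on $\mathbb{F}$. For each rule the claim is that, for every valuation $V$, the premise inequalities hold in $(\mathbb{F},V)$ iff, for some valuation variant that interprets the fresh nominals on the conclusion side, the conclusion inequalities hold; this is exactly the form of soundness needed in the chain from (\ref{aCrct:Eqn2}) to (\ref{aCrct:Eqn3}), since the universal closure over the nominals in the enclosing quasi-inequality absorbs those variants.

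The splitting rules are immediate from the pointwise clauses for $\wedge$ and $\vee$: $(W,R_0,V)\Vdash \alpha\leq^{S}_{S'}\beta\wedge\gamma$ unfolds to the conjunction of $\alpha\leq^{S}_{S'}\beta$ and $\alpha\leq^{S}_{S'}\gamma$, using $v\Vdash\beta\wedge\gamma$ iff $v\Vdash\beta$ and $v\Vdash\gamma$; the $\vee$-splitting is dual. For the approximation rule for $\Diamond$, the premise $\nomi\leq^{S}_{S'}\Diamond\alpha$ says that at $V(\nomi)$ in the model with relation $R_0\setminus S'$ there is a successor satisfying $\alpha$; naming such a successor by a fresh $\nomj$ via a variant $V^{\nomj}_{v}$ yields the pair $\nomi\leq^{S}_{S'}\Diamond^{S'}\nomj$ and $\nomj\leq^{S'}_{S'}\alpha$, using the middle item of the Proposition preceding Substage 1. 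The converse direction is where freshness of $\nomj$ matters: since $\nomj$ does not occur in $\alpha$ or elsewhere, an existential witness on $\nomj$ yields precisely the existential witness demanded by $\Diamond\alpha$. The rule for $\Box$ is handled dually, with $\neg\nomi$ on the right playing the role of the co-witness.

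The approximation rules for $\Diamondblack$ and $\blacksquare$ are essentially the same argument but where the witness is an edge rather than a point: the semantic clause for $\Diamondblack\alpha$ at $V(\nomi)$ (with ambient relation $R_0\setminus S'$) provides an edge $(w_0,w_1)\in R_0\setminus S'$ such that $\alpha$ holds at $V(\nomi)$ in the relation $(R_0\setminus S')\setminus\{(w_0,w_1)\}=R_0\setminus(S'\cup\{(w_0,w_1)\})$. Naming $w_0,w_1$ by fresh $\nomi_{m0},\nomi_{m1}$ turns ``$(w_0,w_1)\in R_0\setminus S'$'' into $\nomi_{m0}\leq^{S'}_{S'}\Diamond^{S'}\nomi_{m1}$ and ``$\alpha$ holds at $V(\nomi)$ in $R_0\setminus(S'\cup\{(\nomi_{m0},\nomi_{m1})\})$'' into $\nomi\leq^{S}_{S'\cup\{(\nomi_{m0},\nomi_{m1})\}}\alpha$; the converse again uses freshness. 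The $\to$-approximation rule reduces to naming a witness for $\alpha$ on the left and a co-witness for $\beta$ on the right via fresh $\nomj,\nomk$, after which $\nomj\to\neg\nomk\leq^{S}_{S'}\neg\nomi$ is a pure, nominal-only reformulation of $\alpha\to\beta\leq^{S}_{S'}\neg\nomi$ at the point $V(\nomi)$. The residuation rules for $\neg$ are a direct consequence of the Boolean equivalence $w\Vdash\neg\alpha$ iff $w\not\Vdash\alpha$: for example $\nomi\leq^{S}_{S'}\neg\alpha$ says $V(\nomi)\not\Vdash\alpha$ in $R_0\setminus S'$, which is the same as $\alpha\leq^{S'}_{S}\neg\nomi$ after swapping the roles of ``assumption side'' and ``conclusion side'' in the inequality, forcing the swap of the superscript and subscript.

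The main obstacle is the $\Diamondblack$/$\blacksquare$ case, because there the rule simultaneously introduces fresh nominals for both endpoints of a witness edge and modifies the deletion set $S'$. The verification must check two things at once: that $(V(\nomi_{m0}),V(\nomi_{m1}))$ really lies in the intended relation $R_0\setminus S'$ (captured by the left premise), and that setting $V(\nomi_{m0})=w_0$, $V(\nomi_{m1})=w_1$ forces $R_0\setminus(S'\cup\{(\nomi_{m0},\nomi_{m1})\})$ to coincide with $(R_0\setminus S')\setminus\{(w_0,w_1)\}$ used in the semantic clause of $\Diamondblack$. Both points follow from the definition of $R_0\setminus S'$ and the freshness of $\nomi_{m0},\nomi_{m1}$, but they must be spelled out explicitly; the remaining rules are routine once this bookkeeping has been checked.
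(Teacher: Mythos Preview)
Your proposal is correct and follows essentially the same route as the paper: the paper proves the proposition by breaking it into separate lemmas for the splitting rules, the $\Diamond/\Box$ approximation rules, the $\Diamondblack/\blacksquare$ approximation rules, the $\to$ approximation rule, and the $\neg$ residuation rules, and in each case the argument is exactly the ``name the witness by a fresh nominal via a valuation variant'' verification you describe (with the asymmetric two-item format: premise implies a variant satisfying the conclusion, and conversely any valuation satisfying the conclusion already satisfies the premise). Your identification of the $\Diamondblack/\blacksquare$ bookkeeping as the only nontrivial point matches the paper's emphasis.
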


\begin{proof}
By Lemma \ref{Lemma:Splitting:substage12}, \ref{Lemma:approximation:white:substage1}, \ref{Lemma:approximation:black:Substage1}, \ref{Lemma:Approximation:to:substage1} and \ref{Lemma:residuation:neg:substage12} below.
\end{proof}

\begin{lemma}\label{Lemma:Splitting:substage12}
The splitting rules in Substage 1 and Substage 2 are sound in $\mathbb{F}$.
\end{lemma}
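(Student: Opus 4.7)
The plan is to prove soundness of both splitting rules by directly unfolding the semantic clauses of inequalities of the form $\phi\leq^{S}_{S'}\psi$ given in the definition on page above, and then reducing to the standard observation that $\land$ is interpreted as meet and $\lor$ as join pointwise. Because the superscripts $S$ and subscripts $S'$ only affect \emph{which} accessibility relation is used on each side, and the splitting rules do not touch them, the argument is essentially the same as in the classical modal setting (cf.\ \cite{CoPa12}); no interaction between the dynamic edge-deletion bookkeeping and the propositional connectives occurs here.

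Concretely, for the first rule I would fix an arbitrary frame $\mathbb{F}=(W,R_0)$ and valuation $V$, and compute
\begin{align*}
(W,R_0,V)\Vdash \alpha\leq^{S}_{S'}\beta\land\gamma
&\iff \forall w\in W,\ (W,R_0\setminus S,V),w\Vdash\alpha \text{ implies } (W,R_0\setminus S',V),w\Vdash\beta\land\gamma\\
&\iff \forall w\in W,\ (W,R_0\setminus S,V),w\Vdash\alpha \text{ implies both } \\
&\qquad (W,R_0\setminus S',V),w\Vdash\beta \text{ and } (W,R_0\setminus S',V),w\Vdash\gamma\\
&\iff (W,R_0,V)\Vdash \alpha\leq^{S}_{S'}\beta \text{ and } (W,R_0,V)\Vdash \alpha\leq^{S}_{S'}\gamma.
\end{align*}
The crucial middle step is the semantic clause for $\land$ in the expanded sabotage modal language, which holds at a point iff both conjuncts do, regardless of the underlying relation. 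Taking the universal closure over valuations $V$ yields equivalence of the premise and conclusion at the frame level.

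The argument for the second rule is dual: one unfolds $(W,R_0,V)\Vdash \alpha\lor\beta\leq^{S}_{S'}\gamma$ as the universally quantified implication that for every $w$, if $(W,R_0\setminus S,V),w\Vdash\alpha\lor\beta$ then $(W,R_0\setminus S',V),w\Vdash\gamma$, and then splits on whether $\alpha$ or $\beta$ is the satisfied disjunct at $w$, using the semantic clause that $(W,R_0\setminus S,V),w\Vdash\alpha\lor\beta$ iff $(W,R_0\setminus S,V),w\Vdash\alpha$ or $(W,R_0\setminus S,V),w\Vdash\beta$. This yields equivalence with the conjunction of $\alpha\leq^{S}_{S'}\gamma$ and $\beta\leq^{S}_{S'}\gamma$.

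There is no genuine obstacle here: the whole point of attaching the edge-deletion labels $S, S'$ to the $\leq$ symbol rather than to the formulas $\beta$, $\gamma$ individually is precisely so that the Boolean splitting rules behave as in the classical case. The only thing to verify carefully is that the same label is inherited by both premises, which is exactly how the rules are stated. Hence both directions of both rules follow immediately, completing the proof.
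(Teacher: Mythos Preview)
Your proof is correct and follows essentially the same approach as the paper: the paper's proof simply asserts the two model-level equivalences $(\mathbb{F},V)\Vdash\alpha\leq^{S}_{S'}\beta\land\gamma$ iff $(\mathbb{F},V)\Vdash\alpha\leq^{S}_{S'}\beta$ and $(\mathbb{F},V)\Vdash\alpha\leq^{S}_{S'}\gamma$ (and dually for $\lor$), while you additionally spell out the unfolding of the semantic clause for $\leq^{S}_{S'}$ and for $\land$, $\lor$ that justifies them. The content and route are the same; you have merely been more explicit.
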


\begin{proof}
For the soundness of the splitting rules, it follows from the fact that for any Kripke frame $\mathbb{F}=(W,R_0)$, any valuation $V$ on $\mathbb{F}$, 
\begin{itemize}
\item $(\mathbb{F},V)\Vdash\alpha\leq^{S}_{S'}\beta\land\gamma\mbox{ iff }((\mathbb{F},V)\Vdash\alpha\leq^{S}_{S'}\beta\mbox{ and }(\mathbb{F},V)\Vdash\alpha\leq^{S}_{S'}\gamma),$
\item $(\mathbb{F},V)\Vdash\alpha\lor\beta\leq^{S}_{S'}\gamma\mbox{ iff }((\mathbb{F},V)\Vdash\alpha\leq^{S}_{S'}\gamma\mbox{ and }(\mathbb{F},V)\Vdash\beta\leq^{S}_{S'}\gamma).$
\end{itemize}
\end{proof}

\begin{lemma}\label{Lemma:approximation:white:substage1}
The approximation rules for $\Diamond,\Box$ in Substage 1 are sound in $\mathbb{F}$.
\end{lemma}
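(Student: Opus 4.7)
My plan is to reduce the soundness of each rule to a clean semantic equivalence on inequalities, and then lift it to the surrounding quasi-inequality using the freshness condition on the newly introduced nominal $\nomj$. The key reading is that an inequality $\phi \leq^{S}_{S'} \psi$ evaluates $\phi$ against the relation $R_0 \setminus S$ and $\psi$ against $R_0 \setminus S'$; in particular, the contextual $\Diamond$ on the right of $\nomi \leq^{S}_{S'} \Diamond\alpha$ ranges over $R_0 \setminus S'$-successors, which is exactly the relation used to interpret the absolute modality $\Diamond^{S'}$, and similarly the contextual $\Box$ on the left of $\Box\alpha \leq^{S}_{S'} \neg\nomi$ ranges over $R_0 \setminus S$-successors, matching $\Box^{S}$.

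For the $\Diamond$-rule, I will show that for any valuation $V$, the statement $(\mathbb{F}, V) \Vdash \nomi \leq^{S}_{S'} \Diamond\alpha$ is equivalent to the existence of $w \in W$ making both $(\mathbb{F}, V^{\nomj}_w) \Vdash \nomi \leq^{S}_{S'} \Diamond^{S'}\nomj$ and $(\mathbb{F}, V^{\nomj}_w) \Vdash \nomj \leq^{S'}_{S'} \alpha$ hold. The forward direction unpacks $\Diamond\alpha$ at $V(\nomi)$ in $R_0\setminus S'$ to produce a witness successor, and uses the freshness of $\nomj$ to assign it that name. The backward direction repackages the two premises into the existence of an $R_0 \setminus S'$-successor of $V(\nomi)$ satisfying $\alpha$ in $R_0\setminus S'$, i.e., into $\Diamond\alpha$ holding at $V(\nomi)$. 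Freshness of $\nomj$ then promotes this local equivalence to an equivalence of the full quasi-inequalities: since $\nomj$ occurs in no other part of the system, universally quantifying over the enlarged stock of valuations (which now fixes $\nomj$) is consistent with existentially witnessing $w$.

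The $\Box$-rule is handled dually. The conclusion $\Box\alpha \leq^{S}_{S'} \neg\nomi$ says exactly that $V(\nomi)$ fails $\Box\alpha$ in $R_0\setminus S$, equivalently that there is $w$ with $(V(\nomi), w) \in R_0 \setminus S$ refuting $\alpha$ in $R_0 \setminus S$. The first premise $\alpha \leq^{S}_{S} \neg\nomj$ captures ``$V(\nomj)$ refutes $\alpha$ in $R_0 \setminus S$'', and the second $\Box^{S}\neg\nomj \leq^{S}_{S'} \neg\nomi$ captures ``$(V(\nomi), V(\nomj)) \in R_0 \setminus S$'', because $\Box^{S}\neg\nomj$ is absolute and fails at $V(\nomi)$ precisely when $V(\nomj)$ is an $R_0\setminus S$-successor of $V(\nomi)$. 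Conjoining the two premises recovers exactly the failure witness for $\Box\alpha$ at $V(\nomi)$, and freshness of $\nomj$ once more lifts the local equivalence to the system level.

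The main obstacle is purely label bookkeeping: one must check that the relation against which the absolute modality $\Diamond^{S'}$ (respectively $\Box^{S}$) introduced by the rule is interpreted genuinely coincides with the relation against which the contextual modality it replaces was being interpreted, and that the label pattern $\leq^{S}_{S'}$ of the conclusion is transcribed correctly into the labels $\leq^{S'}_{S'}$ and $\leq^{S}_{S}$ on the premises. Once these alignments are verified, the semantic unpacking is entirely routine and no induction on formula structure is needed, since each rule only acts on the outermost modality.
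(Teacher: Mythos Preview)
Your proposal is correct and follows essentially the same route as the paper: unpack $\nomi\leq^{S}_{S'}\Diamond\alpha$ as ``$V(\nomi)$ satisfies $\Diamond\alpha$ in $R_0\setminus S'$'', extract an $(R_0\setminus S')$-successor witness, name it $\nomj$ via a $\nomj$-variant valuation (using freshness), and conversely repackage the two premises; then treat $\Box$ dually. The only cosmetic difference is that you make the lifting to the system level via freshness of $\nomj$ explicit, whereas the paper leaves that step implicit and simply records the two directions of the local equivalence.
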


\begin{proof}
We prove for $\Diamond$, the case for $\Box$ is similar.
For the soundness of the approximation rule for $\Diamond$, it suffices to show that for any Kripke frame $\mathbb{F}=(W,R_0)$, any valuation $V$ on $\mathbb{F}$, 
\begin{enumerate}
\item if $(\mathbb{F},V)\Vdash\nomi\leq^{S}_{S'}\Diamond\alpha$, then there is a valuation $V^{\nomj}$ such that $V^{\nomj}$ is the same as $V$ except $V^{\nomj}(\nomj)$, and $(\mathbb{F},V^{\nomj})\Vdash\nomi\leq^{S}_{S'}\Diamond^{S'}\nomj$ and $(\mathbb{F},V^{\nomj})\Vdash\nomj\leq^{S'}_{S'}\alpha$;
\item if $(\mathbb{F},V)\Vdash\nomi\leq^{S}_{S'}\Diamond^{S'}\nomj$ and $(\mathbb{F},V)\Vdash\nomj\leq^{S'}_{S'}\alpha$, then $(\mathbb{F},V)\Vdash\nomi\leq^{S}_{S'}\Diamond\alpha$.
\end{enumerate}
For item 1, if $(\mathbb{F},V)\Vdash\nomi\leq^{S}_{S'}\Diamond\alpha$, then $(W,(R_0\setminus S'), V), V(\nomi)\Vdash\Diamond\alpha$, therefore there exists a $w\in W$ such that $(V(\nomi),w)\in (R_0\setminus S')$ and $(W,(R_0\setminus S'), V), w\Vdash\alpha$. Now take $V^{\nomj}$ such that $V^{\nomj}$ is the same as $V$ except that $V^{\nomj}(\nomj)=\{w\}$, then $(V^{\nomj}(\nomi), V^{\nomj}(\nomj))\in (R_0\setminus S')$, so $(\mathbb{F},V^{\nomj})\Vdash\nomi\leq^{S}_{S'}\Diamond^{S'}\nomj$ and $(\mathbb{F},V^{\nomj})\Vdash\nomj\leq^{S'}_{S'}\alpha$.

For item 2, suppose $(\mathbb{F},V)\Vdash\nomi\leq^{S}_{S'}\Diamond^{S'}\nomj$ and $(\mathbb{F},V)\Vdash\nomj\leq^{S'}_{S'}\alpha$. Then $(V(\nomi), V(\nomj))\in (R_0\setminus S')$ and $(W,(R_0\setminus S'),V),V(\nomj)\Vdash\alpha$, so $(W,(R_0\setminus S'),V),V(\nomi)\Vdash\Diamond\alpha$, therefore $(\mathbb{F},V)\Vdash\nomi\leq^{S}_{S'}\Diamond\alpha$.
\end{proof}

\begin{lemma}\label{Lemma:approximation:black:Substage1}
The approximation rules for $\Diamondblack,\blacksquare$ in Substage 1 are sound in $\mathbb{F}$.
\end{lemma}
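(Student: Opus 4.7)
The plan is to follow the template of the proof of Lemma \ref{Lemma:approximation:white:substage1}, adapting it to handle the edge-deletion that is performed semantically by $\Diamondblack$ and $\blacksquare$. The central technical observation is the set-theoretic identity
\[
(R_0 \setminus S') \setminus \{(w_0, w_1)\} \;=\; R_0 \setminus \bigl(S' \cup \{(w_0, w_1)\}\bigr),
\]
which lets us \emph{externalize} the internal edge-deletion performed by $\Diamondblack$ (respectively $\blacksquare$) into the bookkeeping superscript/subscript of an indexed inequality, with fresh nominals $\nomi_{m0}, \nomi_{m1}$ serving as syntactic placeholders for the endpoints of the deleted edge.

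For the $\Diamondblack$ rule I would establish both directions. First, if $(\mathbb{F},V)\Vdash\nomi\leq^{S}_{S'}\Diamondblack\alpha$, unfolding the semantic clause for $\Diamondblack$ at $V(\nomi)$ under the relation $R_0\setminus S'$ yields an edge $(w_0, w_1) \in R_0 \setminus S'$ such that $\alpha$ holds at $V(\nomi)$ under $(R_0\setminus S')\setminus\{(w_0,w_1)\}$. Taking $V'$ identical to $V$ except $V'(\nomi_{m0})=\{w_0\}$ and $V'(\nomi_{m1})=\{w_1\}$, the first conclusion $\nomi_{m0}\leq^{S'}_{S'}\Diamond^{S'}\nomi_{m1}$ is precisely the assertion that $(w_0,w_1)\in R_0\setminus S'$, and the second conclusion $\nomi\leq^{S}_{S'\cup\{(\nomi_{m0},\nomi_{m1})\}}\alpha$ follows from the identity above together with the freshness of $\nomi_{m0},\nomi_{m1}$ in $\alpha$. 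Conversely, given any extension $V'$ of $V$ for which both conclusion inequalities hold, one reverses the argument: extract $(V'(\nomi_{m0}),V'(\nomi_{m1}))\in R_0\setminus S'$ from the first inequality and apply the identity to the second to recover the semantic witness for $\Diamondblack\alpha$ at $V(\nomi)$.

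The $\blacksquare$ rule is handled dually. The preliminary observation is that an indexed inequality of the form $\beta\leq^{T}_{T'}\neg\nomi$ is equivalent, since $V(\nomi)$ is a singleton, to the negated statement that $\beta$ holds at $V(\nomi)$ under $R_0\setminus T$. Applying this to the premise shows that $(\mathbb{F},V)\Vdash\blacksquare\alpha\leq^{S}_{S'}\neg\nomi$ unfolds to: there exists $(w_0, w_1)\in R_0\setminus S$ such that $\alpha$ fails at $V(\nomi)$ under $R_0\setminus(S\cup\{(w_0,w_1)\})$. Applying the same observation to the second conclusion $\alpha\leq^{S\cup\{(\nomi_{m0},\nomi_{m1})\}}_{S'}\neg\nomi$ shows that the pair of conclusions, witnessed by an extension $V'$ with $V'(\nomi_{m0})=\{w_0\}, V'(\nomi_{m1})=\{w_1\}$, expresses exactly the same existential statement. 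Both directions are then established by the same extension/extraction argument as for $\Diamondblack$.

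I expect the main obstacle to be the careful handling of the two contexts $S$ and $S'$ in each indexed inequality and verifying that the freshness of $\nomi_{m0}, \nomi_{m1}$ ensures that passing from $V$ to $V'$ does not affect the truth of $\alpha$ or of any other subformula appearing in the system; the rest of the argument amounts to bookkeeping with the identity above.
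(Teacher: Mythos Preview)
Your proposal is correct and follows essentially the same approach as the paper: unfold the semantics of $\Diamondblack$ (resp.\ $\blacksquare$) at $V(\nomi)$ in the model $(W,R_0\setminus S',V)$ (resp.\ $(W,R_0\setminus S,V)$), extract the witnessing edge $(w_0,w_1)$, and extend $V$ on the fresh nominals $\nomi_{m0},\nomi_{m1}$ to $w_0,w_1$, using the identity $(R_0\setminus S')\setminus\{(w_0,w_1)\}=R_0\setminus(S'\cup\{(w_0,w_1)\})$ to move between the $\Diamondblack$-clause and the augmented subscript. The paper only spells out the $\Diamondblack$ case and leaves $\blacksquare$ as ``similar'', whereas you additionally sketch the dual argument for $\blacksquare$ via the equivalence of $\beta\leq^{T}_{T'}\neg\nomi$ with failure of $\beta$ at $V(\nomi)$; this is a harmless elaboration of the same idea.
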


\begin{proof}
We prove for $\Diamondblack$, the case for $\blacksquare$ is similar. For the soundness of the approximation rule for $\Diamondblack$, it suffices to show that for any Kripke frame $\mathbb{F}=(W,R_0)$, any valuation $V$ on $\mathbb{F}$, 
\begin{enumerate}
\item if $(\mathbb{F}, V)\Vdash\nomi\leq^{S}_{S'}\Diamondblack\alpha$, then there is a valuation $V^{\nomi_{m0}, \nomi_{m1}}$ such that $V^{\nomi_{m0}, \nomi_{m1}}$ is the same as $V$ except $V^{\nomi_{m0}, \nomi_{m1}}(\nomi_{m0})$ and $V^{\nomi_{m0}, \nomi_{m1}}(\nomi_{m1})$, and $(\mathbb{F}, V^{\nomi_{m0}, \nomi_{m1}})\Vdash\nomi_{m0}\leq^{S'}_{S'}\Diamond^{S'}\nomi_{m1}$ and $(\mathbb{F}, V^{\nomi_{m0}, \nomi_{m1}})\Vdash\nomi\leq^{S}_{S'\cup\{(\nomi_{m0},\nomi_{m1})\}}\alpha$;
\item if $(\mathbb{F}, V)\Vdash\nomi_{m0}\leq^{S'}_{S'}\Diamond^{S'}\nomi_{m1}$ and $(\mathbb{F}, V)\Vdash\nomi\leq^{S}_{S'\cup\{(\nomi_{m0},\nomi_{m1})\}}\alpha$, then $(\mathbb{F}, V)\Vdash\nomi\leq^{S}_{S'}\Diamondblack\alpha$.
\end{enumerate}
For item 1, if $(\mathbb{F}, V)\Vdash\nomi\leq^{S}_{S'}\Diamondblack\alpha$, then $(W,(R_0\setminus S'), V), V(\nomi)\Vdash\Diamondblack\alpha$, therefore there are $(w, v)\in(R_0\setminus S')$ such that $(W, ((R_0\setminus S')\setminus\{(w,v)\}), V), V(\nomi)\Vdash\alpha$. Now take $V^{\nomi_{m0}, \nomi_{m1}}$ such that $V^{\nomi_{m0}, \nomi_{m1}}$ is the same as $V$ except $V^{\nomi_{m0}, \nomi_{m1}}(\nomi_{m0})=\{w\}$ and $V^{\nomi_{m0}, \nomi_{m1}}(\nomi_{m1})=\{v\}$, then by $(w,v)\in (R_0\setminus S')$, we have that $(W, R_0, V^{\nomi_{m0}, \nomi_{m1}})\Vdash \nomi_{m0}\leq^{S'}_{S'}\Diamond^{S'}\nomi_{m1}$, and from $(W, ((R_0\setminus S')\setminus\{(w,v)\}), V), V(\nomi)\Vdash\alpha$ we have that $(W, ((R_0\setminus S')\setminus\{(V^{\nomi_{m0}, \nomi_{m1}}(\nomi_{m0}),V^{\nomi_{m0}, \nomi_{m1}}(\nomi_{m1}))\}),V^{\nomi_{m0}, \nomi_{m1}}), V^{\nomi_{m0}, \nomi_{m1}}(\nomi)\Vdash\alpha$, so $(\mathbb{F}, V^{\nomi_{m0}, \nomi_{m1}})\Vdash\nomi\leq^{S}_{S'\cup\{(\nomi_{m0},\nomi_{m1})\}}\alpha$.

For item 2, if $(\mathbb{F}, V)\Vdash\nomi_{m0}\leq^{S'}_{S'}\Diamond^{S'}\nomi_{m1}$ and $(\mathbb{F},V)\Vdash\nomi\leq^{S}_{S'\cup\{(\nomi_{m0},\nomi_{m1})\}}\alpha$, then $(V(\nomi_{m0}), V(\nomi_{m1}))\in (R_0\setminus S')$, and $(W,(R_0\setminus (S'\cup\{(\nomi_{m0},\nomi_{m1})\})),V),V(\nomi)\Vdash\alpha$, so $(W, (R_0\setminus S'), V), V(\nomi)\Vdash\Diamondblack\alpha$, therefore $(\mathbb{F}, V)\Vdash\nomi\leq^{S}_{S'}\Diamondblack\alpha$.
\end{proof}

\begin{lemma}\label{Lemma:Approximation:to:substage1}
The approximation rule for $\to$ in Substage 1 is sound in $\mathbb{F}$.
\end{lemma}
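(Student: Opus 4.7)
The plan is to adapt the strategy used for $\Diamond$ and $\Diamondblack$ in the preceding lemmas, exploiting the special shape of inequalities whose right-hand side is $\neg\nomi$. In both directions the goal is to show that for any Kripke frame $\mathbb{F}=(W,R_0)$ and valuation $V$, the conclusion $\alpha\to\beta\leq^{S}_{S'}\neg\nomi$ holds in $(\mathbb{F},V)$ if and only if there is a valuation $V^{\nomj,\nomk}$ agreeing with $V$ off $\nomj,\nomk$ under which all three premises $\nomj\leq^{S}_{S}\alpha$, $\beta\leq^{S}_{S}\neg\nomk$, and $\nomj\to\neg\nomk\leq^{S}_{S'}\neg\nomi$ hold.

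First I would prove the auxiliary observation that for any formula $\gamma$, $(\mathbb{F},V)\Vdash\gamma\leq^{S}_{S'}\neg\nomi$ iff $(W,R_0\setminus S,V),V(\nomi)\nVdash\gamma$, since the truth of $\neg\nomi$ at $w$ is simply $w\neq V(\nomi)$ and does not depend on the relation. Applied to the conclusion, this yields $(W,R_0\setminus S,V),V(\nomi)\Vdash\alpha$ together with $(W,R_0\setminus S,V),V(\nomi)\nVdash\beta$. Applied to the third premise $\nomj\to\neg\nomk\leq^{S}_{S'}\neg\nomi$, it yields $V(\nomi)\Vdash\nomj$ and $V(\nomi)\Vdash\nomk$, i.e.\ $V(\nomi)=V(\nomj)=V(\nomk)$.

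For the direction from premises to conclusion, I would chain the three reformulated observations: the first premise gives $V(\nomj)\Vdash\alpha$ under $R_0\setminus S$, the second gives $V(\nomk)\nVdash\beta$ under $R_0\setminus S$, and the third forces $V(\nomi)$, $V(\nomj)$, $V(\nomk)$ to coincide, yielding the reformulated conclusion directly. For the converse direction, I would define $V^{\nomj,\nomk}$ to agree with $V$ everywhere except with $V^{\nomj,\nomk}(\nomj)=V^{\nomj,\nomk}(\nomk)=\{V(\nomi)\}$; the three premises then hold immediately from the reformulated conclusion.

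The main obstacle, though mild, is invoking freshness of $\nomj,\nomk$ in $\alpha,\beta$ when transferring satisfaction of $\alpha$ and refutation of $\beta$ from $(\mathbb{F},V)$ to $(\mathbb{F},V^{\nomj,\nomk})$; this is the usual coincidence-lemma step for the expanded sabotage modal language and should be spelled out explicitly, noting that $\alpha$ and $\beta$ retain the same truth value at every world under either valuation since they do not mention $\nomj$ or $\nomk$.
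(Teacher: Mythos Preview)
Your proposal is correct and follows essentially the same route as the paper's proof: both reduce $\gamma\leq^{S}_{S'}\neg\nomi$ to the failure of $\gamma$ at $V(\nomi)$ under $R_0\setminus S$, use the third premise to force $V(\nomi)=V(\nomj)=V(\nomk)$, and in the converse direction set $V^{\nomj,\nomk}(\nomj)=V^{\nomj,\nomk}(\nomk)=\{V(\nomi)\}$. Your explicit mention of the freshness/coincidence step for $\nomj,\nomk$ in $\alpha,\beta$ is a point the paper leaves implicit, but otherwise the arguments coincide.
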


\begin{proof}
For the soundness of the approximation rule for $\to$, it suffices to show that for any Kripke frame $\mathbb{F}=(W,R_0)$, any valuation $V$ on $\mathbb{F}$, 
\begin{enumerate}
\item if $(\mathbb{F},V)\Vdash\alpha\to\beta\leq^{S}_{S'}\neg\nomi$, then there is a valuation $V^{\nomj,\nomk}$ such that $V^{\nomj,\nomk}$ is the same as $V$ except $V^{\nomj,\nomk}(\nomj)$ and $V^{\nomj,\nomk}(\nomk)$, and $(\mathbb{F},V^{\nomj,\nomk})\Vdash\nomj\leq^{S}_{S}\alpha$, $(\mathbb{F},V^{\nomj,\nomk})\Vdash\beta\leq^{S}_{S}\neg \nomk$ and $(\mathbb{F},V^{\nomj,\nomk})\Vdash\nomj\to\neg\nomk\leq^{S}_{S'}\neg\nomi$; 
\item if $(\mathbb{F},V)\Vdash\nomj\leq^{S}_{S}\alpha$, $(\mathbb{F},V)\Vdash\beta\leq^{S}_{S}\neg \nomk$ and $(\mathbb{F},V)\Vdash\nomj\to\neg\nomk\leq^{S}_{S'}\neg\nomi$, then $(\mathbb{F},V)\Vdash\alpha\to\beta\leq^{S}_{S'}\neg\nomi$.

For item 1, if $(\mathbb{F},V)\Vdash\alpha\to\beta\leq^{S}_{S'}\neg\nomi$, then $(W, (R_0\setminus S),V), V(\nomi)\Vdash\alpha$ and $(W, (R_0\setminus S),V), V(\nomi)\Vdash\neg\beta$. Now take $V^{\nomj,\nomk}$ such that $V^{\nomj,\nomk}$ is the same as $V$ except that $V^{\nomj,\nomk}(\nomj)=V^{\nomj,\nomk}(\nomk)=V(\nomi)$, we have that $(W, (R_0\setminus S),V^{\nomj,\nomk}), V^{\nomj,\nomk}(\nomj)\Vdash\alpha$ and $(W, (R_0\setminus S),V^{\nomj,\nomk}), V^{\nomj,\nomk}(\nomk)\Vdash\neg\beta$, so $(\mathbb{F},V^{\nomj,\nomk})\Vdash\nomj\leq^{S}_{S}\alpha$, $(\mathbb{F},V^{\nomj,\nomk})\Vdash\beta\leq^{S}_{S}\neg \nomk$. Since $V^{\nomj,\nomk}(\nomj)=V^{\nomj,\nomk}(\nomk)=V^{\nomj,\nomk}(\nomi)=V(\nomi)$, it is easy to see that $V^{\nomj,\nomk}(\nomj\to\neg\nomk)=V^{\nomj,\nomk}(\neg\nomi)$, so $(\mathbb{F},V^{\nomj,\nomk})\Vdash\nomj\to\neg\nomk\leq^{S}_{S'}\neg\nomi$.

For item 2, if $(\mathbb{F},V)\Vdash\nomj\leq^{S}_{S}\alpha$, $(\mathbb{F},V)\Vdash\beta\leq^{S}_{S}\neg \nomk$ and $(\mathbb{F},V)\Vdash\nomj\to\neg\nomk\leq^{S}_{S'}\neg\nomi$, then $V(\nomj\to\neg\nomk)\subseteq V(\neg\nomi)$, so $V(\nomi)\subseteq V(\nomj\land\nomk)$, since $\nomi,\nomj,\nomk$ are nominals, there interpretations are singletons, so $V(\nomi)=V(\nomj)=V(\nomk)$. Now from $(\mathbb{F},V)\Vdash\nomj\leq^{S}_{S}\alpha$ we have that $(W,(R_0\setminus S),V), V(\nomj)\Vdash\alpha$, and from $(\mathbb{F},V)\Vdash\beta\leq^{S}_{S}\neg \nomk$ we have that $(W,(R_0\setminus S),V), V(\nomk)\Vdash\neg\beta$, so $(W,(R_0\setminus S),V), V(\nomi)\Vdash\alpha$ and $(W,(R_0\setminus S),V), V(\nomi)\Vdash\neg\beta$, so $(\mathbb{F},V)\Vdash\alpha\to\beta\leq^{S}_{S'}\neg\nomi$.
\end{enumerate}
\end{proof}

\begin{lemma}\label{Lemma:residuation:neg:substage12}
The residuation rules for $\neg$ in Substage 1 and 2 are sound in $\mathbb{F}$.
\end{lemma}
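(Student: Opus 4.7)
The plan is to treat each of the four $\neg$-residuation rules (two from Substage 1, two from Substage 2) separately, and in each case show that the premise and the conclusion are logically equivalent over an arbitrary frame $\mathbb{F}=(W,R_0)$ with an arbitrary valuation $V$. Since every rule merely shuffles a negation across $\leq^{S}_{S'}$ while swapping the superscript and subscript, the underlying principle is the classical contrapositive, once the inequality-level semantics has been unfolded. Crucially, the superscript/subscript swap on $\leq$ corresponds exactly to the two relativized satisfaction relations $R_0\setminus S$ and $R_0\setminus S'$ trading places, which is precisely what contraposition requires.

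First I would handle the two Substage 2 rules. Unfolding the definition, $(\mathbb{F},V)\Vdash\alpha\leq^{S}_{S'}\neg\beta$ means: for every $w\in W$, if $(W,R_0\setminus S,V),w\Vdash\alpha$ then $(W,R_0\setminus S',V),w\nVdash\beta$. The contrapositive reads: for every $w$, if $(W,R_0\setminus S',V),w\Vdash\beta$ then $(W,R_0\setminus S,V),w\nVdash\alpha$, which is exactly $(\mathbb{F},V)\Vdash\beta\leq^{S'}_{S}\neg\alpha$. Hence the first Substage 2 rule is a biconditional. The second Substage 2 rule is handled symmetrically, unfolding $(\mathbb{F},V)\Vdash\neg\alpha\leq^{S}_{S'}\beta$ as ``for all $w$, $(W,R_0\setminus S,V),w\nVdash\alpha$ implies $(W,R_0\setminus S',V),w\Vdash\beta$'', whose contrapositive is $(\mathbb{F},V)\Vdash\neg\beta\leq^{S'}_{S}\alpha$.

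For the Substage 1 rules, which involve a nominal $\nomi$, I would use the fact that $V(\nomi)$ is a singleton, say $\{v\}$. Then $(\mathbb{F},V)\Vdash\nomi\leq^{S}_{S'}\neg\alpha$ is equivalent to the single statement $(W,R_0\setminus S',V),v\nVdash\alpha$, because the only world $w$ at which $\nomi$ is true under $R_0\setminus S$ is $v$. Likewise $(\mathbb{F},V)\Vdash\alpha\leq^{S'}_{S}\neg\nomi$ says: for every $w$, if $(W,R_0\setminus S',V),w\Vdash\alpha$ then $w\neq v$, which is the same as $(W,R_0\setminus S',V),v\nVdash\alpha$. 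Hence the two are equivalent, giving soundness of the first Substage 1 rule. The second Substage 1 rule, from $\neg\alpha\leq^{S}_{S'}\neg\nomi$ to $\nomi\leq^{S'}_{S}\alpha$, is analogous: the premise unpacks to ``for every $w$, if $(W,R_0\setminus S,V),w\nVdash\alpha$ then $w\neq v$'', i.e.\ $(W,R_0\setminus S,V),v\Vdash\alpha$, which coincides with the conclusion once we observe (using the singleton nature of $V(\nomi)$) that $\nomi\leq^{S'}_{S}\alpha$ is equivalent to $(W,R_0\setminus S,V),v\Vdash\alpha$.

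I do not expect any step to be a serious obstacle; the only care required is bookkeeping the superscripts and subscripts so that the relativized relation on each side of the contrapositive matches the relation appearing in the swapped inequality. All four cases reduce to a one-line contrapositive after the inequality semantics is fully expanded, which completes the soundness argument.
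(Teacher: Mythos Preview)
Your proposal is correct and follows essentially the same contrapositive argument as the paper for the Substage~2 rules. The only minor difference is organizational: the paper observes that the Substage~1 rules are special cases of the Substage~2 rules (modulo double negation elimination) and therefore handles them in one stroke, whereas you treat the Substage~1 rules separately by exploiting the singleton nature of $V(\nomi)$; both routes are equally valid and arrive at the same one-line contrapositive.
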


\begin{proof}
It is easy to see that the residuation rules for $\neg$ in Substage 1 are special cases of the residuation rules for $\neg$ in Substage 2 (modulo double negation elimination). Now we only prove it for the residuation rule in Substage 2 where negation symbols occur on the right-hand side of the inequalities, the other rule is similar. 

For the soundness of the residuation rule for $\neg$, it suffices to show that for any Kripke frame $\mathbb{F}=(W,R_0)$, any valuation $V$ on $\mathbb{F}$, $(\mathbb{F},V)\Vdash\alpha\leq^{S}_{S'}\neg\beta$ iff $(\mathbb{F},V)\Vdash\beta\leq^{S'}_{S}\neg\alpha$. Indeed, it follows from the following equivalence:
\begin{center}
\begin{tabular}{c l}
& $(\mathbb{F},V)\Vdash\alpha\leq^{S}_{S'}\neg\beta$\\

iff & for all $w\in W$, if $(W, (R_0\setminus S), V), w\Vdash\alpha$, then $(W, (R_0\setminus S'), V), w\nVdash\beta$\\

iff & for all $w\in W$, if $(W, (R_0\setminus S'), V), w\Vdash\beta$, then $(W, (R_0\setminus S), V), w\nVdash\alpha$\\

iff & $(\mathbb{F},V)\Vdash\beta\leq^{S'}_{S}\neg\alpha$.\\
\end{tabular}
\end{center}
\end{proof}

\begin{proposition}\label{Prop:Substage:2}
The splitting rules, the residuation rules for $\neg,\Diamond,\Box,\Diamondblack,\blacksquare$, the second splitting rule in Substage 2 are sound in $\mathbb{F}$.
\end{proposition}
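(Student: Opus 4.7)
My plan is to follow exactly the strategy of Proposition \ref{Prop:Substage:1}: break the claim into one lemma per rule-class and appeal to existing lemmas where available. The splitting rules in Substage 2 are already handled by Lemma \ref{Lemma:Splitting:substage12}, and the residuation rules for $\neg$ in Substage 2 are handled by Lemma \ref{Lemma:residuation:neg:substage12}. What remains is to prove the soundness of the residuation rules for $\Diamond, \Box$, of the residuation rules for $\Diamondblack, \blacksquare$, and of the second splitting rule.

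For the residuation rules for $\Diamond, \Box$, the aim is to show, on any Kripke frame $\mathbb{F} = (W, R_0)$ and valuation $V$, that $(\mathbb{F}, V) \Vdash \Diamond\alpha \leq^S_{S'} \beta$ iff $(\mathbb{F}, V) \Vdash \alpha \leq^S_{S'} (\Box^S)^{-1}\beta$, and dually that $(\mathbb{F}, V) \Vdash \alpha \leq^S_{S'} \Box\beta$ iff $(\mathbb{F}, V) \Vdash (\Diamond^{S'})^{-1}\alpha \leq^S_{S'} \beta$. Unfolding the semantic clauses for $\Diamond, \Box, (\Box^S)^{-1}, (\Diamond^{S'})^{-1}$ --- and recording that $(\Box^S)^{-1}$ reads its accessibility from the absolute relation $R_0 \setminus S$ while $(\Diamond^{S'})^{-1}$ reads it from $R_0 \setminus S'$ --- each side of each equivalence becomes a two-quantifier first-order statement about $R_0 \setminus S$ and $R_0 \setminus S'$, and the equivalence is the textbook commutation of $\exists$ and $\forall$ across an implication. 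The second splitting rule is similarly routine: by unfolding the semantic clause of a universally quantified mega-inequality and using the distributivity of $\forall$ and implication over $\bigamp$, the single mega-inequality in the conclusion decomposes into the conjunction of the two mega-inequalities in the premise.

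The main obstacle is the residuation rules for $\Diamondblack$ and $\blacksquare$, since here the rule replaces a contextual connective by moving a fresh nominal-pair $(\nomi_{m0}, \nomi_{m1})$ into the subscript/superscript recording deleted edges. Concretely, for $\blacksquare$ I want to establish $(\mathbb{F},V) \Vdash \alpha \leq^S_{S'} \blacksquare\beta$ iff $(\mathbb{F}, V) \Vdash \forall\nomi_{m0}\forall\nomi_{m1}(\nomi_{m0}\leq^{S'}_{S'}\Diamond^{S'}\nomi_{m1}\Rightarrow \alpha \leq^S_{S'\cup\{(\nomi_{m0},\nomi_{m1})\}} \beta)$. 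The crucial identity I will use, available because each nominal denotes a singleton and the bookkeeping in $S'$ is done via nominal symbols, is
\[
R_0 \setminus (S' \cup \{(\nomi_{m0}, \nomi_{m1})\}) \ =\ (R_0 \setminus S') \setminus \{(V(\nomi_{m0}), V(\nomi_{m1}))\},
\]
together with the equivalence from the proposition after the inequality definition that $(\mathbb{F}, V) \Vdash \nomi_{m0} \leq^{S'}_{S'} \Diamond^{S'}\nomi_{m1}$ iff $(V(\nomi_{m0}), V(\nomi_{m1})) \in R_0 \setminus S'$. These let me translate the semantic clause ``for all edges $(w_0,w_1) \in R_0 \setminus S'$'' hidden inside $\blacksquare\beta$ into the outer universal quantifier over fresh nominals guarded by $\nomi_{m0} \leq^{S'}_{S'} \Diamond^{S'}\nomi_{m1}$; freshness of $\nomi_{m0}, \nomi_{m1}$ ensures that the corresponding valuation variants leave the interpretations of $\alpha$ and $\beta$ unchanged. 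The case for $\Diamondblack$ is entirely analogous, with $S$ in place of $S'$ and an existential in place of the universal reading of the relevant edge quantifier. Assembling these lemmas in the style of Proposition \ref{Prop:Substage:1} yields the result.
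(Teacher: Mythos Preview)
Your proposal is correct and follows essentially the same approach as the paper: the paper also reduces Proposition~\ref{Prop:Substage:2} to Lemmas~\ref{Lemma:Splitting:substage12} and~\ref{Lemma:residuation:neg:substage12} together with three new lemmas (for $\Diamond,\Box$; for $\Diamondblack,\blacksquare$; and for the second splitting rule) proved exactly along the lines you sketch, including the use of the identity $R_0\setminus(S'\cup\{(\nomi_{m0},\nomi_{m1})\})=(R_0\setminus S')\setminus\{(V(\nomi_{m0}),V(\nomi_{m1}))\}$ and freshness of $\nomi_{m0},\nomi_{m1}$ in the $\blacksquare$ case, and the meta-equivalence $\forall x\forall y(\alpha\to\beta\land\gamma)\leftrightarrow\forall x\forall y(\alpha\to\beta)\land\forall x\forall y(\alpha\to\gamma)$ for the second splitting rule.
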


\begin{proof}
By Lemma \ref{Lemma:Splitting:substage12}, \ref{Lemma:residuation:neg:substage12}, \ref{Lemma:Residuation:white:substage2}, \ref{Lemma:Residuation:black:substage2} and \ref{Lemma:2ndsplitting:substage2}. 
\end{proof}

\begin{lemma}\label{Lemma:Residuation:white:substage2}
The residuation rules for $\Diamond,\Box$ in Substage 2 are sound in $\mathbb{F}$. 
\end{lemma}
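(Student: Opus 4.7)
The plan is to prove both residuation rules sound in both directions simultaneously, by reducing each side of the biconditional to the same universally quantified two-variable first-order statement over $\mathbb{F}=(W,R_0)$. Fix an arbitrary valuation $V$. For the rule relating $\Diamond\alpha\leq^{S}_{S'}\beta$ and $\alpha\leq^{S}_{S'}(\Box^{S})^{-1}\beta$, I would unfold the semantics of $\leq^{S}_{S'}$ on the first side, noting that $\Diamond$ is contextual and is therefore interpreted in the relation $R_0\setminus S$ coming from the left annotation of the inequality. This yields the statement: for all $w,v\in W$, if $(w,v)\in R_0\setminus S$ and $(W,R_0\setminus S,V),v\Vdash\alpha$, then $(W,R_0\setminus S',V),w\Vdash\beta$. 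Unfolding the second side similarly and then using the clause for $(\Box^{S})^{-1}$, whose inverse step is taken with respect to the absolute relation $R_0\setminus S$ while its argument $\beta$ is evaluated in the outer context $R_0\setminus S'$, produces exactly the same statement after reordering the two universal quantifiers.

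For the dual rule relating $\alpha\leq^{S}_{S'}\Box\beta$ and $(\Diamond^{S'})^{-1}\alpha\leq^{S}_{S'}\beta$, I would argue analogously. On the first side, with $\Box$ contextual in $R_0\setminus S'$, the statement unfolds to: for all $w,v\in W$, if $(W,R_0\setminus S,V),w\Vdash\alpha$ and $(w,v)\in R_0\setminus S'$, then $(W,R_0\setminus S',V),v\Vdash\beta$. The second side uses $(\Diamond^{S'})^{-1}$, whose inverse step is taken with respect to $R_0\setminus S'$ (matching the relation used by $\Box$ on the first side), while its argument $\alpha$ is evaluated in the outer context $R_0\setminus S$. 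Both unfoldings yield the same two-variable statement, and soundness in both directions follows.

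The main obstacle is purely bookkeeping: one has to keep straight four distinct accessibility relations, namely $R_0\setminus S$, $R_0\setminus S'$ and their inverses, together with the fundamental distinction between the contextual connectives $\Diamond,\Box$ (whose interpretations follow the annotations of the ambient inequality) and the absolute connectives $(\Box^{S})^{-1},(\Diamond^{S'})^{-1}$ (whose interpretations are fixed by their own superscripts). The crucial design choice of the superscripts $S$ and $S'$ in the conclusions of the two rules is precisely what makes the forward edge on one side and the inverse edge on the other be taken with respect to the same relation; once this alignment is observed, each biconditional reduces to little more than swapping the order of two universal quantifiers.
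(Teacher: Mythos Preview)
Your proposal is correct and follows essentially the same approach as the paper: both arguments amount to unfolding the semantics of $\leq^{S}_{S'}$ together with the clauses for the contextual $\Diamond$/$\Box$ and the absolute $(\Box^{S})^{-1}$/$(\Diamond^{S'})^{-1}$. The only presentational difference is that you handle both directions of each equivalence at once by reducing each side to the same two-variable statement, whereas the paper proves the two directions for the $\Diamond$ rule separately (and leaves $\Box$ as ``similar''); your packaging is slightly more economical but not a different idea.
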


\begin{proof}
We prove it for $\Diamond$, and the rule for $\Box$ is similar.

To show the soundness of the residuation rule for $\Diamond$ in Substage 2, it suffices to show that for any Kripke frame $\mathbb{F}=(W,R_0)$, any valuation $V$ on $\mathbb{F}$, $(\mathbb{F},V)\Vdash\Diamond\alpha\leq^{S}_{S'}\beta$ iff $(\mathbb{F},V)\Vdash\alpha\leq^{S}_{S'}(\Box^{S})^{-1}\beta$.

$\Rightarrow$: if $(\mathbb{F},V)\Vdash\Diamond\alpha\leq^{S}_{S'}\beta$, then for all $w\in W$, if $(W, (R_0\setminus S), V), w\Vdash\Diamond\alpha$, then $(W, (R_0\setminus S'), V), w\Vdash\beta$. Our aim is to show that for all $v\in W$, if $(W, (R_0\setminus S), V), v\Vdash\alpha$, then $(W, (R_0\setminus S'), V), v\Vdash(\Box^{S})^{-1}\beta$.

Consider any $v\in W$ such that $(W, (R_0\setminus S), V), v\Vdash\alpha$. Then for any $u\in W$ such that $(u,v)\in (R_0\setminus S)$, $(W, (R_0\setminus S), V), u\Vdash\Diamond\alpha$. Since $(\mathbb{F},V)\Vdash\Diamond\alpha\leq^{S}_{S'}\beta$, we have that $(W, (R_0\setminus S'), V), u\Vdash\beta$, so for any $u\in W$ such that $(v,u)\in (R_0\setminus S)^{-1}$, $(W, (R_0\setminus S'), V), u\Vdash\beta$, so $(W, (R_0\setminus S'), V), v\Vdash(\Box^{S})^{-1}\beta$.

$\Leftarrow$: if $(\mathbb{F},V)\Vdash\alpha\leq^{S}_{S'}(\Box^{S})^{-1}\beta$, then for all $w\in W$, if $(W, (R_0\setminus S), V), w\Vdash\alpha$, then $(W, (R_0\setminus S'), V), w\Vdash(\Box^{S})^{-1}\beta$. Our aim is to show that for all $v\in W$, if $(W, (R_0\setminus S), V), v\Vdash\Diamond\alpha$, then $(W, (R_0\setminus S'), V), v\Vdash\beta$.

Now assume that $(W, (R_0\setminus S), V), v\Vdash\Diamond\alpha$. Then there is a $u\in W$ such that $(v,u)\in (R_0\setminus S)$ and $(W, (R_0\setminus S), V), u\Vdash\alpha$. By $(\mathbb{F},V)\Vdash\alpha\leq^{S}_{S'}(\Box^{S})^{-1}\beta$, we have that $(W, (R_0\setminus S'), V), u\Vdash(\Box^{S})^{-1}\beta$. Therefore, for $v\in W$, we have $(u,v)\in (R_0\setminus S)^{-1}$, thus $(W, (R_0\setminus S'), V), v\Vdash\beta$.
\end{proof}

\begin{lemma}\label{Lemma:Residuation:black:substage2}
The residuation rules for $\Diamondblack,\blacksquare$ in Substage 2 are sound in $\mathbb{F}$. 
\end{lemma}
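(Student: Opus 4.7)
The plan is to prove both rules along the same lines as Lemma \ref{Lemma:Residuation:white:substage2}, handling the $\blacksquare$ case (the $\Diamondblack$ case being dual). For any Kripke frame $\mathbb{F}=(W,R_0)$ and valuation $V$ on $\mathbb{F}$, I would establish that
\[(\mathbb{F},V)\Vdash\alpha\leq^{S}_{S'}\blacksquare\beta \text{ iff } (\mathbb{F},V)\Vdash\forall\nomi_{m0}\forall\nomi_{m1}(\nomi_{m0}\leq^{S'}_{S'}\Diamond^{S'}\nomi_{m1}\Rightarrow \alpha\leq^{S}_{S'\cup\{(\nomi_{m0},\nomi_{m1})\}}\beta),\]
and the $\Diamondblack$ case is handled symmetrically with $S$ and $S'$ swapping roles.

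The proof proceeds by unpacking both sides using the semantic clauses established earlier. The left-hand side says: for every $w\in W$, if $(W, R_0\setminus S, V), w\Vdash\alpha$, then for every edge $(u,v)\in R_0\setminus S'$, $(W, (R_0\setminus S')\setminus\{(u,v)\}, V), w\Vdash\beta$. The right-hand side says: for every valuation variant $V^{\nomi_{m0},\nomi_{m1}}_{u,v}$, if $(u,v)\in R_0\setminus S'$ then for every $w\in W$, if $(W, R_0\setminus S, V^{\nomi_{m0},\nomi_{m1}}_{u,v}), w\Vdash\alpha$, then $(W, R_0\setminus(S'\cup\{(\nomi_{m0},\nomi_{m1})\}), V^{\nomi_{m0},\nomi_{m1}}_{u,v}), w\Vdash\beta$.

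For the forward direction, given any $u,v\in W$ with $(u,v)\in R_0\setminus S'$, I would choose the variant $V^{\nomi_{m0},\nomi_{m1}}_{u,v}$. Since $\nomi_{m0},\nomi_{m1}$ are fresh and do not occur in $\alpha$ or $\beta$, the truth of these formulas is independent of the reinterpretation, and $R_0\setminus(S'\cup\{(\nomi_{m0},\nomi_{m1})\})$ under $V^{\nomi_{m0},\nomi_{m1}}_{u,v}$ coincides with $(R_0\setminus S')\setminus\{(u,v)\}$. So the implication in the right-hand side follows from the universal statement on the left. For the backward direction, given any edge $(u,v)\in R_0\setminus S'$ witnessing the outer universal on the left, I take the variant $V^{\nomi_{m0},\nomi_{m1}}_{u,v}$, which satisfies $\nomi_{m0}\leq^{S'}_{S'}\Diamond^{S'}\nomi_{m1}$, and the hypothesis gives the required inclusion.

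The main obstacle is purely bookkeeping: keeping track of which context $S$ or $S'$ is being extended by the fresh edge and verifying carefully that $R_0\setminus(S'\cup\{(\nomi_{m0},\nomi_{m1})\})$ under the variant valuation coincides with the concrete set $(R_0\setminus S')\setminus\{(u,v)\}$. No new semantic idea beyond the one in Lemma \ref{Lemma:approximation:black:Substage1} is needed; the proof there already exhibits the required equivalence between choosing an edge and choosing an interpretation for a pair of fresh nominals. Finally, I would note that the freshness of $\nomi_{m0},\nomi_{m1}$ is essential exactly in the step that pushes $V$ to $V^{\nomi_{m0},\nomi_{m1}}_{u,v}$ without disturbing the truth of $\alpha$ and $\beta$.
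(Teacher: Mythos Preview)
Your proposal is correct and follows essentially the same approach as the paper: both prove the $\blacksquare$ case (deferring $\Diamondblack$ to symmetry), unpack the two sides via the semantic clauses for mega-inequalities and $\blacksquare$, and use the freshness of $\nomi_{m0},\nomi_{m1}$ to pass between $V$ and $V^{\nomi_{m0},\nomi_{m1}}_{u,v}$ without disturbing $\alpha,\beta$. The paper presents this as a single chain of ``iff'' steps rather than separate forward/backward directions, but the content is the same.
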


\begin{proof}
We prove it for $\blacksquare$, and the rule for $\Diamondblack$ is similar.

For the residuation rule for $\blacksquare$, it suffices to show that for any Kripke frame $\mathbb{F}=(W, R_0)$, any valuation $V$ on $\mathbb{F}$, $(\mathbb{F},V)\Vdash\alpha\leq^{S}_{S'}\blacksquare\beta$ iff $(\mathbb{F},V)\Vdash\forall\nomi_{m0}\forall\nomi_{m1}(\nomi_{m0}\leq^{S'}_{S'}\Diamond^{S'}\nomi_{m1}\ \Rightarrow\ \alpha \leq^{S}_{S'\cup\{(\nomi_{m0},\nomi_{m1})\}} \beta)$. Indeed, 

$(\mathbb{F},V)\Vdash\forall\nomi_{m0}\forall\nomi_{m1}(\nomi_{m0}\leq^{S'}_{S'}\Diamond^{S'}\nomi_{m1}\ \Rightarrow\ \alpha \leq^{S}_{S'\cup\{(\nomi_{m0},\nomi_{m1})\}} \beta)$,

iff for all $w,v\in W$, if $(w,v)\in (R_0\setminus S')$ then $(W,R_0,V^{\nomi_{m0},\nomi_{m1}}_{w,v})\Vdash\alpha \leq^{S}_{S'\cup\{(\nomi_{m0},\nomi_{m1})\}} \beta$, where $V^{\nomi_{m0},\nomi_{m1}}_{w,v}$ is the same as $V$ except that $V^{\nomi_{m0},\nomi_{m1}}_{w,v}(\nomi_{m0})=\{w\}$, $V^{\nomi_{m0},\nomi_{m1}}_{w,v}(\nomi_{m1})=\{v\}$, 

iff for all $u,v,w\in W$, if $(w,v)\in (R_0\setminus S')$ and $(W,(R_0\setminus S),V^{\nomi_{m0},\nomi_{m1}}_{w,v}),u\Vdash\alpha$, then $(W,(R_0\setminus (S'\cup\{(\nomi_{m0},\nomi_{m1})\})),V^{\nomi_{m0},\nomi_{m1}}_{w,v}),u\Vdash\beta$, 

iff for all $u\in W$, if $(W, (R_0\setminus S),V^{\nomi_{m0},\nomi_{m1}}_{w,v}),u\Vdash\alpha$, then for all $v,w\in W$, if $(w,v)\in (R_0\setminus S')$ then $(W,(R_0\setminus (S'\cup\{(\nomi_{m0},\nomi_{m1})\})),V^{\nomi_{m0},\nomi_{m1}}_{w,v}),u\Vdash\beta$, 

iff for all $u\in W$, if $(W, (R_0\setminus S),V^{\nomi_{m0},\nomi_{m1}}_{w,v}),u\Vdash\alpha$, then $(W, (R_0\setminus S'),V^{\nomi_{m0},\nomi_{m1}}_{w,v}),u\Vdash\blacksquare\beta$,

iff for all $u\in W$, if $(W, (R_0\setminus S),V),u\Vdash\alpha$, then  $(W, (R_0\setminus S'),V),u\Vdash\blacksquare\beta$ (since $\nomi_{m0}$ and $\nomi_{m1}$ do not occur in $\alpha$ and $\beta$),

iff $(\mathbb{F},V)\Vdash\alpha\leq^{S}_{S'}\blacksquare\beta$.
\end{proof}

\begin{lemma}\label{Lemma:2ndsplitting:substage2}
The second splitting rule in Substage 2 is sound in $\mathbb{F}$.
\end{lemma}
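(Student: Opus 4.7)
The plan is to unfold the semantic clauses defining the satisfaction of a mega-inequality at the outermost level, and then exploit the fact that first-order universal quantification distributes over conjunction, so that a statement of the form $\forall x\forall y\,(\Phi(x,y)\Rightarrow \Psi_1(x,y)\land\Psi_2(x,y))$ is equivalent to the conjunction $\forall x\forall y\,(\Phi(x,y)\Rightarrow\Psi_1(x,y))\land\forall x\forall y\,(\Phi(x,y)\Rightarrow\Psi_2(x,y))$.

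More precisely, fix a Kripke frame $\mathbb{F}=(W,R_0)$ and a valuation $V$ on $\mathbb{F}$. By the definition of the semantics of mega-inequalities, $(\mathbb{F},V)\Vdash\forall\nomi_{m0}\forall\nomi_{m1}(\nomi_{m0}\leq^{S}_{S}\Diamond^{S}\nomi_{m1}\Rightarrow\mathsf{Mega_1}\bigamp\mathsf{Mega_2})$ holds iff for all $w,v\in W$ with $(w,v)\in R_0\setminus S$, we have $(W,R_0,V^{\nomi_{m0},\nomi_{m1}}_{w,v})\Vdash\mathsf{Mega_1}\bigamp\mathsf{Mega_2}$. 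By the clause for meta-conjunction, this last statement amounts to $(W,R_0,V^{\nomi_{m0},\nomi_{m1}}_{w,v})\Vdash\mathsf{Mega_1}$ and $(W,R_0,V^{\nomi_{m0},\nomi_{m1}}_{w,v})\Vdash\mathsf{Mega_2}$. Distributing the quantifiers ``for all $w,v$ with $(w,v)\in R_0\setminus S$'' over this conjunction yields exactly $(\mathbb{F},V)\Vdash\forall\nomi_{m0}\forall\nomi_{m1}(\nomi_{m0}\leq^{S}_{S}\Diamond^{S}\nomi_{m1}\Rightarrow\mathsf{Mega_1})$ together with $(\mathbb{F},V)\Vdash\forall\nomi_{m0}\forall\nomi_{m1}(\nomi_{m0}\leq^{S}_{S}\Diamond^{S}\nomi_{m1}\Rightarrow\mathsf{Mega_2})$, which is the desired equivalence.

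There is no real obstacle here; the whole statement reduces to a routine verification that the semantic clauses interact with meta-conjunction as expected. The only point worth being careful about is that both instances of the quantified premise share the same nominals $\nomi_{m0},\nomi_{m1}$ and the same context $S$, so the single universal quantifier can be pushed inside and split coherently; had the two premises used different contexts or different nominal pairs, the rule would not be sound as stated. Since the rule only applies in the uniform case, the equivalence goes through in both directions, and the soundness of the rule is established.
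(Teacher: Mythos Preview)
Your proof is correct and follows essentially the same approach as the paper: the paper's own proof simply invokes the meta-equivalence $\forall x\forall y(\alpha\rightarrow\beta\land\gamma)\leftrightarrow\forall x\forall y(\alpha\rightarrow\beta)\land\forall x\forall y(\alpha\rightarrow\gamma)$, and your argument is just a more explicit unfolding of this using the semantic clauses for mega-inequalities.
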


\begin{proof}
It follows immediately from the meta-equivalence that $\forall x\forall y(\alpha\rightarrow\beta\land\gamma)\leftrightarrow\forall x\forall y(\alpha\rightarrow\beta)\land\forall x\forall y(\alpha\rightarrow\gamma)$.
\end{proof}

\begin{proposition}\label{Prop:Substage:3}
The packing rules in Substage 3 are sound in $\mathbb{F}$.
\end{proposition}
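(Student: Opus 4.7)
The plan is to verify each of the four packing rules in turn. All four have the same structural content: a mega-inequality obtained by a chain of ``$\nomi \leq^{S}_{S} \Diamond^{S} \nomj \Rightarrow \ldots$'' prefixes on top of a body inequality is equivalent to a single (universally or existentially closed) inequality whose formula side carries the edge constraints via the global modality $\mathsf{A}$. The equivalence rests on two observations together with a classical meta-level quantifier swap.

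To prove the first rule, I fix $\mathbb{F} = (W, R_0)$ and a valuation $V$, and unfold both sides. The mega-inequality premise says: for every $V'$ agreeing with $V$ except on the pairs $\nomi_{m_{j}0}, \nomi_{m_{j}1}$ and such that $(V'(\nomi_{m_{j}0}), V'(\nomi_{m_{j}1})) \in R_0 \setminus S_j$ for each $j$, and for every $w \in W$, if $(W, R_0 \setminus S, V'), w \Vdash \alpha$ then $w \in V(p)$. The packed-form conclusion says: for every $w \in W$, if there exists such a $V'$ witnessing both the global truth of each $\mathsf{A}(\nomi_{m_{j}0} \to \Diamond^{S_j} \nomi_{m_{j}1})$ and $(W, R_0, V'), w \Vdash \alpha$, then $w \in V(p)$. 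Two observations bridge the two forms. First, by the proposition characterizing edge constraints (stated immediately after the definition of mega-inequalities), both $\nomi_{m_{j}0} \leq^{S_j}_{S_j} \Diamond^{S_j} \nomi_{m_{j}1}$ and $\mathsf{A}(\nomi_{m_{j}0} \to \Diamond^{S_j} \nomi_{m_{j}1})$ are equivalent to $(V'(\nomi_{m_{j}0}), V'(\nomi_{m_{j}1})) \in R_0 \setminus S_j$. Second, because $\alpha$ is pure and contains no contextual connectives $\Box, \Diamond, \blacksquare, \Diamondblack$, a straightforward induction on $\alpha$ shows that its truth value at a world in $(W, R, V')$ does not depend on $R$, so $(W, R_0 \setminus S, V'), w \Vdash \alpha$ iff $(W, R_0, V'), w \Vdash \alpha$. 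With these two observations, the equivalence reduces to the classical logical swap
\[
\forall \vec{x} \forall w \bigl( C(\vec{x}) \land D(\vec{x}, w) \to B(w)\bigr) \;\leftrightarrow\; \forall w \bigl( \exists \vec{x}(C(\vec{x}) \land D(\vec{x}, w)) \to B(w)\bigr),
\]
where $C(\vec{x})$ encodes the edge constraints and $D(\vec{x}, w)$ encodes $\alpha$ at $w$.

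The second packing rule is dual, with the propositional variable $p$ on the left: the packed form uses $\forall \vec{\nomi}$ together with a top-level implication $\mathsf{A}(\ldots) \land \ldots \land \mathsf{A}(\ldots) \to \beta$ on the right. The same two observations apply with $\beta$ in place of $\alpha$, and the relevant meta-level swap pulls the universal quantification over $\vec{\nomi}$ inside the universal closure over $w$. The third and fourth rules are analogous, except that the right-hand (respectively left-hand) formula $\gamma$ may contain contextual modalities, so its evaluation must be carried out under $R_0 \setminus S'$ (respectively $R_0 \setminus S$). Here the packed form uses the inequality $\top \leq^{\emptyset}_{S'} \ldots$ (respectively $\top \leq^{\emptyset}_{S} \ldots$) precisely to fix this evaluation context, while the pure formula $\alpha$ is evaluated uniformly by observation two.

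The main bookkeeping step, which I expect to be the only delicate point, is tracking the superscripts and subscripts on $\leq^{S}_{S'}$ when moving between the two forms. In particular, one must confirm that an occurrence of $\alpha \leq^{S}_{S'} p$ in the mega form may be replaced by an evaluation of $\alpha$ at $w$ under any context (we pick $\emptyset$ or $S'$ as convenient), which rests precisely on the purity of $\alpha$ and the absence of contextual modalities in it. For the third and fourth rules, one must also check that the outer context $S'$ (respectively $S$) in the packed inequality matches the context used to evaluate the possibly contextual formula $\gamma$. Apart from this bookkeeping, the proof is a straightforward combination of the two observations above and the classical quantifier swaps, so I would write out the first rule in detail and indicate how the other three follow the same template.
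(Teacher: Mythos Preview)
Your proposal is correct and follows essentially the same approach as the paper: the paper also proves only the first packing rule in detail (declaring the others similar), first observes that the absence of contextual connectives in $\alpha$ makes the superscripts/subscripts on $\leq$ irrelevant, then unfolds the mega-inequality semantics, invokes the equivalence between $\nomi\leq^{S}_{S}\Diamond^{S}\nomj$ and $\mathsf{A}(\nomi\to\Diamond^{S}\nomj)$, and finishes with the same $\forall\vec{x}\forall w \leftrightarrow \forall w\exists\vec{x}$ quantifier swap. Your two named observations and the bookkeeping remark about the context $S'$ (resp.\ $S$) in the third and fourth rules make explicit exactly what the paper's chain of ``iff'' steps is doing.
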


\begin{proof}
We only prove the soundness of the first packing rule, the others are similar.

It is easy to see that in the mega-inequality of the premise and in the conclusion, contextual connectives $\Box,\Diamond,\blacksquare,\Diamondblack$ do not occur, so we can ignore the superscripts and subscripts in the inequalities occuring in the rule. 

Therefore, for any Kripke frame $\mathbb{F}=(W,R_0)$ and any valuation $V$ on it, 

$(\mathbb{F},V)\Vdash\forall\nomi_{m_{0}0}\forall\nomi_{m_{0}1}
(\nomi_{m_{0}0}\leq^{S_{0}}_{S_{0}}\Diamond^{S_{0}}\nomi_{m_{0}1}\Rightarrow\ldots
\forall\nomi_{m_{k}0}\forall\nomi_{m_{k}1}(\nomi_{m_{k}0}\leq^{S_{k}}_{S_{k}}\Diamond^{S_{k}}\nomi_{m_{k}1}\ \Rightarrow\ \alpha\leq^{S}_{S'} p)\ldots)$,

iff for all $w_{m_0 0}, w_{m_0 1}, \ldots, w_{m_k 0}, w_{m_k 1}\in W$, if $(w_{m_0 0},w_{m_0 1})\in (R_0\setminus S_0)$, \ldots, $(w_{m_k 0},w_{m_k 1})\in (R_0\setminus S_k)$, then $(\mathbb{F}, V^{\nomi_{m_0 0}, \nomi_{m_0 1}, \ldots, \nomi_{m_k 0}, \nomi_{m_k 1}}_{w_{m_0 0}, w_{m_0 1}, \ldots, w_{m_k 0}, w_{m_k 1}})\Vdash\alpha\leq^{S}_{S'} p$,

iff for all $w_{m_0 0}, w_{m_0 1}, \ldots, w_{m_k 0}, w_{m_k 1}\in W$, if $(w_{m_0 0},w_{m_0 1})\in (R_0\setminus S_0)$, \ldots, $(w_{m_k 0},w_{m_k 1})\in (R_0\setminus S_k)$, then $(\mathbb{F}, V^{\nomi_{m_0 0}, \nomi_{m_0 1}, \ldots, \nomi_{m_k 0}, \nomi_{m_k 1}}_{w_{m_0 0}, w_{m_0 1}, \ldots, w_{m_k 0}, w_{m_k 1}})\Vdash\alpha\leq^{\emptyset}_{\emptyset} p$,

iff for all $w_{m_0 0}, w_{m_0 1}, \ldots, w_{m_k 0}, w_{m_k 1}, v\in W$, if $(w_{m_0 0},w_{m_0 1})\in (R_0\setminus S_0)$, \ldots, $(w_{m_k 0},w_{m_k 1})\in (R_0\setminus S_k)$, $(\mathbb{F}, V^{\nomi_{m_0 0}, \nomi_{m_0 1}, \ldots, \nomi_{m_k 0}, \nomi_{m_k 1}}_{w_{m_0 0}, w_{m_0 1}, \ldots, w_{m_k 0}, w_{m_k 1}}),v\Vdash\alpha$, then $(\mathbb{F}, V^{\nomi_{m_0 0}, \nomi_{m_0 1}, \ldots, \nomi_{m_k 0}, \nomi_{m_k 1}}_{w_{m_0 0}, w_{m_0 1}, \ldots, w_{m_k 0}, w_{m_k 1}}),v\Vdash p$,

iff for all $w_{m_0 0}, w_{m_0 1}, \ldots, w_{m_k 0}, w_{m_k 1}, v\in W$, if $(w_{m_0 0},w_{m_0 1})\in (R_0\setminus S_0)$, \ldots, $(w_{m_k 0},w_{m_k 1})\in (R_0\setminus S_k)$, $(\mathbb{F}, V^{\nomi_{m_0 0}, \nomi_{m_0 1}, \ldots, \nomi_{m_k 0}, \nomi_{m_k 1}}_{w_{m_0 0}, w_{m_0 1}, \ldots, w_{m_k 0}, w_{m_k 1}}),v\Vdash\alpha$, then $(\mathbb{F}, V),v\Vdash p$,

iff for all $w_{m_0 0}, w_{m_0 1}, \ldots, w_{m_k 0}, w_{m_k 1}, v\in W$, if $(\mathbb{F}, V^{\nomi_{m_0 0}, \nomi_{m_0 1}, \ldots, \nomi_{m_k 0}, \nomi_{m_k 1}}_{w_{m_0 0}, w_{m_0 1}, \ldots, w_{m_k 0}, w_{m_k 1}})\Vdash A(\nomi_{m_0 0}\to\nomi_{m_0 1}), \ldots, A(\nomi_{m_k 0}\to\nomi_{m_k 1})$, $(\mathbb{F}, V^{\nomi_{m_0 0}, \nomi_{m_0 1}, \ldots, \nomi_{m_k 0}, \nomi_{m_k 1}}_{w_{m_0 0}, w_{m_0 1}, \ldots, w_{m_k 0}, w_{m_k 1}}),v\Vdash\alpha$, then $(\mathbb{F}, V),v\Vdash p$,

iff for all $w_{m_0 0}, w_{m_0 1}, \ldots, w_{m_k 0}, w_{m_k 1}, v\in W$, if $(\mathbb{F}, V^{\nomi_{m_0 0}, \nomi_{m_0 1}, \ldots, \nomi_{m_k 0}, \nomi_{m_k 1}}_{w_{m_0 0}, w_{m_0 1}, \ldots, w_{m_k 0}, w_{m_k 1}}),v\Vdash A(\nomi_{m_0 0}\to\nomi_{m_0 1})\land\ldots\land A(\nomi_{m_k 0}\to\nomi_{m_k 1})\land\alpha$, then $(\mathbb{F}, V),v\Vdash p$,

iff for all $v\in W$, if there exists $w_{m_0 0}, w_{m_0 1}, \ldots, w_{m_k 0}, w_{m_k 1}\in W$ such that $(\mathbb{F}, V^{\nomi_{m_0 0}, \nomi_{m_0 1}, \ldots, \nomi_{m_k 0}, \nomi_{m_k 1}}_{w_{m_0 0}, w_{m_0 1}, \ldots, w_{m_k 0}, w_{m_k 1}}),v\Vdash A(\nomi_{m_0 0}\to\nomi_{m_0 1})\land\ldots\land A(\nomi_{m_k 0}\to\nomi_{m_k 1})\land\alpha$, then $(\mathbb{F}, V),v\Vdash p$,

iff for all $v\in W$, if $(\mathbb{F}, V),v\Vdash\exists\nomi_{m_{0}0}\exists\nomi_{m_{0}1}\ldots\exists\nomi_{m_{k}0}\exists\nomi_{m_{k}1}(A(\nomi_{m_0 0}\to\nomi_{m_0 1})\land\ldots\land A(\nomi_{m_k 0}\to\nomi_{m_k 1})\land\alpha)$, then $(\mathbb{F}, V),v\Vdash p$,

iff $(\mathbb{F}, V)\Vdash\exists\nomi_{m_{0}0}\exists\nomi_{m_{0}1}\ldots\exists\nomi_{m_{k}0}\exists\nomi_{m_{k}1}(
A(\nomi_{m_{0}0}\to\Diamond^{S_{0}}\nomi_{m_{0}1})
\land
A(\nomi_{m_{k}0}\to\Diamond^{S_{k}}\nomi_{m_{k}1})
\land\alpha)\leq^{\emptyset}_{\emptyset} p$.
\end{proof}

\begin{proposition}\label{Prop:Substage:4}
The Ackermann rules in Substage 4 are sound in $\mathbb{F}$.
\end{proposition}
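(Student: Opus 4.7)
The plan is to prove the right-handed Ackermann rule; the left-handed case is dual. Fix a Kripke frame $\mathbb{F} = (W, R_0)$ and a valuation $V_0$ assigning values to every propositional variable and nominal other than $p$. The soundness of the rule reduces to the following biconditional: there exists a valuation $V$ extending $V_0$ such that $(\mathbb{F}, V) \Vdash \alpha_i \leq^{\emptyset}_{\emptyset} p$ for every $1 \leq i \leq n$ and $(\mathbb{F}, V) \Vdash \forall \vec{\nomi_j}(\beta_j \leq^{T_j}_{T'_j} \gamma_j)$ for every $1 \leq j \leq m$, if and only if $(\mathbb{F}, V_0) \Vdash \forall \vec{\nomi_j}(\beta_j((\alpha_1 \vee \ldots \vee \alpha_n)/p) \leq^{T_j}_{T'_j} \gamma_j((\alpha_1 \vee \ldots \vee \alpha_n)/p))$ for every $1 \leq j \leq m$. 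Once this equivalence is established, the equivalence of the two quasi-universally quantified inequalities on $\mathbb{F}$ follows immediately, because the eventual conclusion $\nomi_0 \leq \neg \nomi_1$ of the surrounding quasi-inequality does not contain $p$.

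For the $(\Leftarrow)$ direction, I would exhibit an explicit witness by setting $V := V_0$ on all variables other than $p$ and defining $V(p) := V_0(\alpha_1 \vee \ldots \vee \alpha_n)$. Then each $\alpha_i \leq^{\emptyset}_{\emptyset} p$ holds trivially (using that $\alpha_i$ does not contain $p$), and each $\beta_j \leq^{T_j}_{T'_j} \gamma_j$ evaluated under $V$ coincides with its substituted version evaluated under $V_0$ by the substitution lemma discussed below. For $(\Rightarrow)$, given any such witness $V$, one has $V(p) \supseteq V_0(\alpha_1 \vee \ldots \vee \alpha_n)$ from the $\alpha_i \leq^{\emptyset}_{\emptyset} p$ premises, and by the monotonicity of $\beta_j$ in $p$ and the antitonicity of $\gamma_j$ in $p$ (proved by a routine induction on their signed generation trees, exactly as in the classical case), shrinking $V(p)$ down to $V_0(\alpha_1 \vee \ldots \vee \alpha_n)$ preserves each $\beta_j \leq^{T_j}_{T'_j} \gamma_j$, which again coincides with its substituted version by the substitution lemma.

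The main obstacle, and the point where the sabotage semantics diverges from the classical modal setting, is verifying the substitution lemma in the presence of the contextual modalities $\Box, \Diamond, \blacksquare, \Diamondblack$. If an occurrence of $p$ in $\beta_j$ lies under such a contextual connective, then after the substitution the formula $\alpha_1 \vee \ldots \vee \alpha_n$ must be evaluated at that inner position against a current relation $R' \subsetneq R_0$ rather than against $R_0$ itself. The resolution is precisely the side condition imposed by the rule: each $\alpha_i$ is pure and contains no contextual connectives. A straightforward induction on the structure of $\alpha_i$ shows that its semantics depends only on $R_0$, the valuation, and the labels $S$ on the absolute modalities $\Box^S, \Diamond^S, (\Box^S)^{-1}, (\Diamond^S)^{-1}$ (together with the global modalities $\mathsf{A}, \mathsf{E}$, Booleans, and nominals), so the interpretation of each $\alpha_i$ at any given world coincides across all $(W, R', V)$ for which $R' \subseteq R_0$. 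A second induction on $\beta_j$ and $\gamma_j$ then propagates this invariance through every contextual modality, Boolean connective, and nominal quantifier, showing that interpreting $\beta_j$ under the extended valuation $V(p) = V_0(\alpha_1 \vee \ldots \vee \alpha_n)$ coincides pointwise with the interpretation of $\beta_j((\alpha_1 \vee \ldots \vee \alpha_n)/p)$ under $V_0$, which completes the soundness proof.
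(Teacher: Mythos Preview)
Your proposal is correct and follows essentially the same route as the paper: reduce to the right-handed case, set the witnessing valuation $V(p)$ to the interpretation of $\alpha_1\lor\cdots\lor\alpha_n$ for one direction, use monotonicity/antitonicity for the other, and crucially exploit the side condition that the $\alpha_i$ contain no contextual connectives so that their interpretation is invariant under edge deletions. The paper's own proof is terser (it assumes $n=m=1$ and states the context-invariance of $\alpha$ and $p$ as a one-line observation rather than spelling out the two inductions), but the argument is the same.
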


\begin{proof}
We only prove it for the right-handed Ackermann rule, the other rule is similar.

Without loss of generality we assume that $n=m=1$. It suffices to show the following right-handed Ackermann lemma:
\begin{lemma}\label{Lemma:Right:Ackermann}
Assume $\alpha$ is pure and contains no contextual modalities $\Box,\Diamond,\blacksquare,\Diamondblack$ and does not contain nominals in $\vec{\nomi}$, $\beta$ is positive in $p$ and $\gamma$ is negative in $p$, then for any Kripke frame $\mathbb{F}=(W,R_0)$ and any valuation $V$ on it, 

$(\mathbb{F},V)\Vdash\forall\vec{\nomi}(\beta(\alpha/p)\leq^{T}_{T'}\gamma(\alpha/p))$\ \ \  iff\ \ \  there exists a valuation $V^p$ such that $(\mathbb{F},V^{p})\Vdash\alpha\leq^{\emptyset}_{\emptyset}p$ and $(\mathbb{F},V^{p})\Vdash\forall\vec{\nomi}(\beta\leq^{T}_{T'}\gamma)$, where $V^{p}$ is the same as $V$ except $V^{p}(p)$.
\end{lemma}

Notice that $\alpha$ and $p$ do not contain contextual modalities, so their valuation do not change when the context is different.

$\Rightarrow$: Take $V^{p}$ such that $V^{p}$ is the same as $V$ except that $V^{p}(p)=V(\alpha)$. Since $\alpha$ does not contain $p$, it is easy to see that $V^{p}(\alpha)=V(\alpha)=V^{p}(p)$. Therefore $(\mathbb{F},V^{p})\Vdash\alpha\leq^{\emptyset}_{\emptyset}p$. Since the valuation of $\alpha$ and $p$ do not change when the context is different, so for any $w\in W$, 

$(W, (R_0\setminus T),V^{p}), w\Vdash \beta$ iff $(W, (R_0\setminus T),V), w\Vdash \beta(\alpha/p)$, and 

$(W, (R_0\setminus T'),V^{p}), w\Vdash \gamma$ iff $(W, (R_0\setminus T'),V), w\Vdash \gamma(\alpha/p)$, so 

from $(\mathbb{F},V)\Vdash\forall\vec{\nomi}(\beta(\alpha/p)\leq^{T}_{T'}\gamma(\alpha/p))$ one can get $(\mathbb{F},V^{p})\Vdash\forall\vec{\nomi}(\beta\leq^{T}_{T'}\gamma)$.

$\Leftarrow$: This direction follows from the monotonicity of $\beta$ in $p$ and the antitonicity of $\gamma$ in $p$, and that the valuation of $\alpha$ and $p$ do not change when the context is different.
\end{proof}

\subsection{Success of $\mathsf{ALBA}^{\mathsf{SML}}$ on Sahlqvist inequalities}\label{Subsec:Success}

In the present subsection we show that $\mathsf{ALBA}^{\mathsf{SML}}$ succeeds on all Sahlqvist inequalities:

\begin{theorem}\label{Thm:Success}
$\mathsf{ALBA}^{\mathsf{SML}}$ succeeds on all Sahlqvist inequalities.
\end{theorem}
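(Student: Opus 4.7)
The plan is to follow the standard unified-correspondence success argument (as in \cite{CoPa12,CPZ:Trans}), adapted to the sabotage setting by tracking both the shape of the signed generation trees and the context superscripts $S,S'$ that record previously-deleted edges. I would argue by induction on the signed generation trees of $\epsilon$-Sahlqvist inequalities, with the central invariant that after Stage 1 and the first-approximation rule, each inequality in the working set still has $\epsilon$-critical branches that are excellent, so that outer nodes can be processed first and inner nodes second.

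First I would verify Stage 1. The distribution, splitting, and monotone/antitone variable-elimination rules all preserve the $\epsilon$-Sahlqvist shape: the distribution rules act on outer nodes and do not create non-excellent critical branches, outer splitting decomposes a Sahlqvist inequality into strictly smaller Sahlqvist inequalities, and the monotone/antitone rules dispose of all non-critical propositional variables. The first-approximation rule then produces pairs $\nomi_0\leq\phi_i$ and $\psi_i\leq\neg\nomi_1$ whose critical branches in $+\phi_i$ and $-\psi_i$ remain excellent.

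Next, in Substage 1 I would exhaust every outer node above an $\epsilon$-critical leaf. Each application of an outer splitting, approximation (for $\Diamond,\Box,\Diamondblack,\blacksquare,\to$) or $\neg$-residuation strictly decreases the number of outer connectives sitting between the root and some critical leaf, introducing fresh nominals and, for $\Diamondblack$ and $\blacksquare$, updating $S$ or $S'$ by a nominal-edge pair. When Substage 1 terminates, each critical branch is built entirely from inner nodes, and every critical propositional variable appears at the top of an inequality of the form $\nomi\leq^S_{S'}\gamma$ or $\gamma\leq^S_{S'}\neg\nomi$ where $\gamma$ consists of inner connectives along the critical branches. Substage 2 then residuates these inner connectives one at a time: the rules for $\neg,\Diamond,\Box$ push them across the inequality, replacing contextual $\Diamond,\Box$ by the absolute inverses $(\Box^S)^{-1},(\Diamond^S)^{-1}$, while the rules for $\Diamondblack,\blacksquare$ introduce the mega-inequality prefix $\forall\nomi_{m0}\forall\nomi_{m1}(\nomi_{m0}\leq^S_S\Diamond^S\nomi_{m1}\Rightarrow\cdots)$. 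The second splitting rule distributes these prefixes across meta-conjunctions, so that each critical variable ends up isolated on one side of a quantifier-prefixed inequality whose other side is pure.

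Finally I would apply the packing rules in Substage 3 to absorb the mega-quantifier prefixes into global-modality formulas built from $\mathsf{A}$ and $\Diamond^S$, reshaping every critical inequality into the flat form $\alpha\leq^\emptyset_\emptyset p$ or $p\leq^\emptyset_\emptyset\beta$ required by the Ackermann rules, with $\alpha,\beta$ pure and free of the contextual connectives $\Box,\Diamond,\blacksquare,\Diamondblack$. The monotonicity side-conditions of the Ackermann rules then follow directly from the order-type $\epsilon$ and the $\epsilon$-Sahlqvist assumption, so all propositional variables are eliminated in succession. The main obstacle will be the bookkeeping of the $(S,S')$ superscripts through the residuations of $\Diamondblack$ and $\blacksquare$: one has to check that by the time Substage 3 is reached, every contextual modality that once lay on a critical branch has been replaced either by an absolute or inverse-absolute modality, or has been absorbed into a mega-quantifier prefix, so that the ``no contextual modality'' side-condition of the packing and Ackermann rules is actually met. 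Excellent-branch-ness is precisely what guarantees that this systematic elimination of contextual modalities completes before any critical variable is reached, and so the algorithm never halts in failure on a Sahlqvist input.
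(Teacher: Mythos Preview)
Your proposal is correct and follows essentially the same route as the paper: Stage~1 yields definite $\epsilon$-Sahlqvist inequalities, Substage~1 strips the outer part down to inner $\epsilon$-Sahlqvist trees sitting next to a nominal, Substage~2 residuates the inner connectives into mega-inequalities with a pure side, Substage~3 packs, and Substage~4 runs Ackermann. One small imprecision: the monotone/antitone variable-elimination rules do not in general ``dispose of all non-critical propositional variables'' (they only fire when the whole inequality is uniform in $p$); in both your argument and the paper's, the non-critical occurrences are instead carried through as the $\epsilon^\partial$-uniform $\gamma$-part and discharged by the monotonicity side-condition of the Ackermann rules.
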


\begin{definition}[Definite $\epsilon$-Sahlqvist inequality]
Given an order type $\epsilon$, $*\in\{-,+\}$, the signed generation tree $*\phi$ of the term $\phi(p_1,\ldots, p_n)$ is \emph{definite $\epsilon$-Sahlqvist} if there is no $+\lor,-\land$ occurring in the outer part on an $\epsilon$-critical branch. An inequality $\phi\leq\psi$ is definite $\epsilon$-Sahlqvist if the trees $+\phi$ and $-\psi$ are both definite $\epsilon$-Sahlqvist.
\end{definition}

\begin{lemma}\label{Lemma:Stage:1}
Let $\{\phi_i\leq\psi_i\}_{i\in I}=\mathsf{Preprocess}(\phi\leq\psi)$ obtained by exhaustive application of the rules in Stage 1 on an input $\epsilon$-Sahlqvist inequality $\phi\leq\psi$. Then each $\phi_i\leq\psi_i$ is a definite $\epsilon$-Sahlqvist inequality.
\end{lemma}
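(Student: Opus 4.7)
The plan is to show that Stage~1 is an exhaustively applied rewriting system that preserves $\epsilon$-Sahlqvistness at each step and terminates with inequalities free of $+\lor$ and $-\land$ on the outer part of every $\epsilon$-critical branch. I proceed in three parts: preservation under each rule family, a termination measure, and a characterisation of the normal form.

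For preservation, the distribution rules are designed to push precisely those outer-node shapes whose child inherits a $+$-sign (respectively a $-$-sign) down past $+\lor$ (respectively $-\land$). A direct inspection of Definition~\ref{adef: signed gen tree} confirms that the lists $\{+\Diamond,+\Diamondblack,-\neg,+\land,-\to\}$ and dually $\{-\Box,-\blacksquare,+\neg,-\lor,-\to\}$ are complete for this role. Each application swaps an outer-parent-above-$\lor$ pattern for a $\lor$-above-outer-parent pattern, possibly duplicating a sibling subtree, and so preserves the inner-prefix/outer-suffix excellent shape along every $\epsilon$-critical branch, including branches threading through any duplicated copy. Splitting rules trivially preserve excellence of the surviving critical branches. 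The monotone/antitone variable-elimination rules apply only when a propositional variable $p$ has uniform sign in the inequality, so substituting $\bot$ or $\top$ leaves the $\epsilon$-critical branches of all remaining variables untouched.

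For termination, attach to each inequality the pair consisting of (a) the multisets of depths of $+\lor$-occurrences in $+\phi$ and of $-\land$-occurrences in $-\psi$ that lie on the outer part of some $\epsilon$-critical branch, and (b) the number of remaining propositional variables. Each distribution step strictly decreases (a) in the multiset order, because the offending node moves one step closer to the root and any duplicated copy sits at strictly smaller depth than the eliminated parent; each splitting step strictly decreases the cardinality of (a); each variable-elimination step strictly decreases (b). Hence the rewriting halts after finitely many steps, producing $\{\phi_i\leq\psi_i\}_{i\in I}=\mathsf{Preprocess}(\phi\leq\psi)$.

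For the normal form, suppose for contradiction that some $\phi_i\leq\psi_i$ still has $+\lor$ on the outer part of an $\epsilon$-critical branch in $+\phi_i$. If this $+\lor$ is at the root, splitting is still enabled; otherwise its parent lies on the outer portion of the same branch, hence is an outer node sending a $+$-sign to its child, hence falls in $\{+\Diamond,+\Diamondblack,-\neg,+\land,-\to,+\lor\}$. In every case except $+\lor$-above-$+\lor$ the corresponding distribution rule is enabled; in the chain case, following the $+\lor$-chain upward reaches a root occurrence to which splitting applies. Either way, a rule remains enabled, contradicting exhaustiveness. The symmetric argument rules out $-\land$ on the outer part of any $\epsilon$-critical branch in $-\psi_i$, proving that each $\phi_i\leq\psi_i$ is definite $\epsilon$-Sahlqvist. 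The hard part is the bookkeeping around duplication in distributions such as $+\land$ over $+\lor$ or $-\to$ over $+\lor$: one must verify that every $+\lor$ or $-\land$ appearing inside a duplicated sibling subtree sits strictly deeper than the eliminated parent (so (a) indeed decreases) and inherits the excellent shape of the original branch. This bookkeeping is routine but unavoidable, and closely parallels the classical treatment in \cite{CPZ:Trans}.
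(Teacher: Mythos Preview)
Your overall strategy coincides with the paper's: the Stage~1 rules preserve $\epsilon$-Sahlqvistness, exhaustive application surfaces every offending $+\lor$/$-\land$ at the root where splitting removes it, and hence each output inequality is definite. The paper's own proof is a three-sentence sketch of exactly this argument and does not address termination at all; you add welcome detail, including an explicit normal-form analysis.

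The gap is in your termination measure. For a binary distribution such as $(\alpha\lor\beta)\land\gamma\ \rightsquigarrow\ (\alpha\land\gamma)\lor(\beta\land\gamma)$ the sibling $\gamma$ is duplicated, and any offending $+\lor$ inside $\gamma$ moves one step \emph{deeper} and now occurs twice. Concretely, if the $+\land$ sits at depth $d$ and $\gamma$ carries an offending $+\lor$ at depth $d{+}2$, the depth multiset passes from $\{d{+}1,\,d{+}2\}$ to $\{d,\,d{+}3,\,d{+}3\}$, and this is \emph{not} a decrease in the Dershowitz--Manna multiset order: you cannot trade $d{+}2$ for two strictly larger elements. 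Your closing remark that the duplicated copies ``sit strictly deeper than the eliminated parent (so (a) indeed decreases)'' has the inequality pointing the wrong way---deeper means larger, which blocks the decrease, not enables it. Termination does hold, but it needs a different measure (for instance a product/sum polynomial interpretation in the style of DNF-normalisation termination proofs, or a recursive path ordering); alternatively you can do what the paper does and simply invoke the classical treatment in \cite{CPZ:Trans} without spelling out a measure. A smaller point: your normal-form case analysis explicitly treats only $+\lor$ in $+\phi_i$ and $-\land$ in $-\psi_i$, but sign-flipping outer nodes ($-\neg$, and the first argument of $-\to$) allow $+\lor$ to occur in the outer part of $-\psi_i$ and $-\land$ in $+\phi_i$; your parent-analysis already covers these cases verbatim, but the statement should quantify over both signed trees.
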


\begin{proof}
It is easy to see that by applying the distribution rules, all occurrences of $+\lor$ and $-\land$ in the outer part of an $\epsilon$-critical branch have been pushed up towards the root of the signed generation trees $+\phi$ and $-\psi$. Then by exhaustively applying the splitting rules, all such $+\lor$ and $-\land$ are eliminated. Since by applying the distribution rules, the splitting rules and the monotone/antitone variable elimination rules do not change the $\epsilon$-Sahlqvistness of a signed generation tree, in $\mathsf{Preprocess}(\phi\leq\psi)$, each signed generation tree $+\phi_i$ and $-\psi_i$ are $\epsilon$-Sahlqvist, and since they do not have $+\lor$ and $-\land$ in the outer part in the $\epsilon$-critical branches, they are definite.
\end{proof}

\begin{definition}[Inner $\epsilon$-Sahlqvist signed generation tree]
Given an order type $\epsilon$, $*\in\{-,+\}$, the signed generation tree $*\phi$ of the term $\phi(p_1,\ldots, p_n)$ is \emph{inner $\epsilon$-Sahlqvist} if its outer part $P_2$ on an $\epsilon$-critical branch is always empty, i.e.\ its $\epsilon$-critical branches have inner nodes only.
\end{definition}

\begin{lemma}\label{Lemma:Substage:2:1}
Given inequalities $\nomi_0\leq^{\emptyset}_{\emptyset}\phi_i$ and $\psi_i\leq^{\emptyset}_{\emptyset}\neg\nomi_1$obtained from Stage 1 where $+\phi_i$ and $-\psi_i$ are definite $\epsilon$-Sahlqvist, by applying the rules in Substage 1 of Stage 2 exhaustively, the inequalities that we get are in one of the following forms:

\begin{enumerate}
    \item pure inequalities which does not have occurrences of propositional variables;
    \item inequalities of the form $\nomi\leq^{S}_{S'}\alpha$ where $+\alpha$ is inner $\epsilon$-Sahlqvist;
    \item inequalities of the form $\beta\leq^{S}_{S'}\neg\nomi$ where $-\beta$ is inner $\epsilon$-Sahlqvist.
\end{enumerate}
\end{lemma}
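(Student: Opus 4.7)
The plan is to prove this by induction on the \emph{outer depth} at the root of the relevant signed generation tree, i.e., the length of the longest path from the root that consists solely of outer nodes in the sense of Table~\ref{aJoin:and:Meet:Friendly:Table}. For an inequality of the form $\nomi \leq^{S}_{S'} \phi$ the relevant tree is $+\phi$, and for $\psi \leq^{S}_{S'} \neg\nomi$ it is $-\psi$. Throughout the induction, I track the invariant that each signed subformula appearing on the right-hand side of a form-2 inequality, or on the left-hand side of a form-3 inequality, arises as a signed subtree of the original $+\phi_i$ or $-\psi_i$. Since a signed subtree of a definite $\epsilon$-Sahlqvist tree is itself definite $\epsilon$-Sahlqvist (the outer part of a critical branch in the subtree is a sub-path of the outer part in the original), this transfers the hypothesis all the way down the recursion.

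\textbf{Base case.} If the root of the relevant signed subtree is (a) a leaf ($\nomi$, $\top$, $\bot$, or a propositional variable), or (b) an inner node per Table~\ref{aJoin:and:Meet:Friendly:Table}, or (c) an outer $+\vee$ on the right-hand side of a form-2 inequality or an outer $-\wedge$ on the left-hand side of a form-3 inequality, then no Substage 1 rule applies at the top and the inequality is already in one of the three target forms. In (a) with $\top,\bot,\nomi$ we get a pure inequality (form 1); the other situations give form 2 or 3, and the inner $\epsilon$-Sahlqvist condition on the remaining subtree holds either because the outer part at the root is already empty, or, in case (c), because the definite $\epsilon$-Sahlqvist hypothesis forbids any $\epsilon$-critical branch from passing through a $+\vee$ or $-\wedge$ in the outer part, so the subtree vacuously satisfies the inner $\epsilon$-Sahlqvist condition.

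\textbf{Inductive step.} Otherwise the relevant subtree has an outer root and I match the outer connective to the corresponding Substage 1 rule. For $\nomi \leq^S_{S'} \phi$ with $+\phi$ outer-rooted, the cases $+\wedge$, $+\Diamond$, $+\Diamondblack$ and $+\neg$ are handled respectively by the splitting rule, the two approximation rules, and the residuation rule (the last producing a form-3 conclusion). For $\psi \leq^S_{S'} \neg\nomi$ with $-\psi$ outer-rooted, the cases $-\vee$, $-\Box$, $-\blacksquare$, $-\to$ and $-\neg$ are handled by splitting, the three approximation rules, and the residuation rule (producing a form-2 conclusion). In every case, each non-pure conclusion has its relevant signed subtree as a proper signed subtree of the original, hence of strictly smaller outer depth, while the auxiliary conclusions introduced by the approximation rules are of the pure shape $\nomi \leq^{S'}_{S'} \Diamond^{S'} \nomj$ or analogous, hence form 1. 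Applying the induction hypothesis to each non-pure conclusion yields the desired reduction; termination is immediate since outer depth is a well-founded measure strictly decreased by each rule application.

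\textbf{Main obstacle.} The most delicate bookkeeping is the approximation rule for $\to$, which simultaneously produces three conclusions: a pure inequality $\nomj\rightarrow\neg\nomk \leq^{S}_{S'} \neg\nomi$, a form-2 inequality $\nomj \leq^{S}_{S} \alpha$, and a form-3 inequality $\beta \leq^{S}_{S} \neg\nomk$. Here I need to verify that $+\alpha$ and $-\beta$ are indeed signed subtrees of $-\psi_i$ inherited through the outer $-\to$ node (recall that $-\to$ flips the sign on the first argument and preserves it on the second), so that both conclusions remain within the inductive setup. Analogous but lighter checks are needed for the residuation rule for $\neg$, where the sign-flip through $\neg$ must be reconciled with the outer/inner classification. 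I expect this inheritance-of-signs check to be the only genuinely non-routine part; once it is done, the enumeration of cases and the outer-depth induction close the argument mechanically.
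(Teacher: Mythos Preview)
Your approach is essentially the paper's own argument, just made explicit: the paper also peels off one outer root connective at a time with the Substage~1 rules, observes that each step produces either a pure inequality or an inequality whose non-nominal side is a proper signed subformula that is again definite $\epsilon$-Sahlqvist, and concludes that after exhaustive application the outer part of every $\epsilon$-critical branch has been consumed.

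One point to tighten: your base case (b) ``root is an inner node per Table~\ref{aJoin:and:Meet:Friendly:Table}'' overlaps with your inductive step, since $+\wedge$, $+\neg$, $-\vee$, $-\neg$ appear in \emph{both} columns of the table. Taken literally, (b) would let you stop at, say, $\nomi\leq^{S}_{S'}\beta\wedge\gamma$, but $+(\beta\wedge\gamma)$ need not be inner $\epsilon$-Sahlqvist (e.g.\ $\beta=\Diamond p$ with $\epsilon(p)=1$), and moreover the splitting rule \emph{does} apply there. What you really want in (b) is ``the root is \emph{not} an outer node'' (i.e.\ $+\Box$, $+\blacksquare$, $-\Diamond$, $-\Diamondblack$); your inductive step then cleanly covers all outer roots except $+\vee$/$-\wedge$, which is case (c). Relatedly, for case (c) to go through you need the slightly stronger invariant that the current subtree is reached from $+\phi_i$ (or $-\psi_i$) by removing a path of \emph{outer} nodes at the top, not merely that it is some signed subtree; otherwise you cannot conclude that a $+\vee$ at the current root would sit in the outer part $P_2$ of a critical branch of the original tree. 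This stronger invariant is preserved by every Substage~1 rule, so the fix is purely a matter of stating it.
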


\begin{proof}
Indeed, the rules in the Substage 1 of Stage 2 deal with outer nodes in the signed generation trees $+\phi_i$ and $-\psi_i$ except $+\lor$,$-\land$. For each rule, without loss of generality assume we start with an inequality of the form $\nomi\leq^{S}_{S'}\alpha$, then by applying the rules in Substage 1 of Stage 2, the inequalities we get are either a pure inequality without propositional variables, or 
an inequality where the left-hand side (resp.\ right-hand side) is $\nomi$ (resp.\ $\neg\nomi$), and the other side is a formula $\alpha'$ which is a subformula of $\alpha$, such that $\alpha'$ has one root connective less than $\alpha$. Indeed, if $\alpha'$ is on the left-hand side (resp.\ right-hand side) then $-\alpha'$ ($+\alpha'$) is definite $\epsilon$-Sahlqvist.

By applying the rules in the Substage 1 of Stage 2 exhaustively, we can eliminate all the outer connectives in the critical branches, so for non-pure inequalities, they become of form 2 or form 3.
\end{proof}

\begin{lemma}\label{Lemma:Substage:2:2}
Assume we have an inequality $\nomi\leq^{S}_{S'}\alpha$ or $\beta\leq^{S}_{S'}\neg\nomi$ where $+\alpha$ and $-\beta$ are inner $\epsilon$-Sahlqvist, by applying the rules in Substage 2 of Stage 2, we have (mega-)inequalities ($k$ can be 0 where a mega-inequality becomes an inequality) of the following form:

\begin{enumerate}
\item $\forall\nomi_{m_{0}0}\forall\nomi_{m_{0}1}(\nomi_{m_{0}0}\leq^{S_{0}}_{S_{0}}\Diamond^{S_{0}}\nomi_{m_{0}1}\Rightarrow\ldots\forall\nomi_{m_{k}0}\forall\nomi_{m_{k}1}(\nomi_{m_{k}0}\leq^{S_{k}}_{S_{k}}\Diamond^{S_{k}}\nomi_{m_{k}1}\Rightarrow\alpha\leq^{S}_{S'}p)\ldots)$,
where $\epsilon(p)=1$, $\alpha$ is pure and does not contain contextual connectives $\Box,\Diamond,\blacksquare,\Diamondblack$;

\item $\forall\nomi_{m_{0}0}\forall\nomi_{m_{0}1}(\nomi_{m_{0}0}\leq^{S_{0}}_{S_{0}}\Diamond^{S_{0}}\nomi_{m_{0}1}\Rightarrow\ldots\forall\nomi_{m_{k}0}\forall\nomi_{m_{k}1}(\nomi_{m_{k}0}\leq^{S_{k}}_{S_{k}}\Diamond^{S_{k}}\nomi_{m_{k}1}\Rightarrow p\leq^{S}_{S'}\beta)\ldots)$,
where $\epsilon(p)=\partial$, $\beta$ is pure and does not contain contextual connectives $\Box,\Diamond,\blacksquare,\Diamondblack$;

\item $\forall\nomi_{m_{0}0}\forall\nomi_{m_{0}1}(\nomi_{m_{0}0}\leq^{S_{0}}_{S_{0}}\Diamond^{S_{0}}\nomi_{m_{0}1}\Rightarrow\ldots\forall\nomi_{m_{k}0}\forall\nomi_{m_{k}1}(\nomi_{m_{k}0}\leq^{S_{k}}_{S_{k}}\Diamond^{S_{k}}\nomi_{m_{k}1}\Rightarrow\alpha\leq^{S}_{S'}\gamma)\ldots)$,
where $\alpha$ is pure and does not contain contextual connectives $\Box,\Diamond,\blacksquare,\Diamondblack$, and $+\gamma$ is $\epsilon^{\partial}$-uniform;

\item $\forall\nomi_{m_{0}0}\forall\nomi_{m_{0}1}(\nomi_{m_{0}0}\leq^{S_{0}}_{S_{0}}\Diamond^{S_{0}}\nomi_{m_{0}1}\Rightarrow\ldots\forall\nomi_{m_{k}0}\forall\nomi_{m_{k}1}(\nomi_{m_{k}0}\leq^{S_{k}}_{S_{k}}\Diamond^{S_{k}}\nomi_{m_{k}1}\Rightarrow\gamma\leq^{S}_{S'}\beta)\ldots)$,
where $\beta$ is pure and does not contain contextual connectives $\Box,\Diamond,\blacksquare,\Diamondblack$, and $-\gamma$ is $\epsilon^{\partial}$-uniform.
\end{enumerate}
\end{lemma}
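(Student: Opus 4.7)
The plan is to proceed by strong induction on the number of connectives in $\alpha$ (respectively $\beta$), after first strengthening the inductive hypothesis so that the left-hand (resp.\ right-hand) side may be any pure formula $\sigma$ free of contextual modalities $\Box,\Diamond,\blacksquare,\Diamondblack$, sitting inside an ambient mega-quantifier prefix of the template shape $\forall\nomi_{m_{0}0}\forall\nomi_{m_{0}1}(\nomi_{m_{0}0}\leq^{S_{0}}_{S_{0}}\Diamond^{S_{0}}\nomi_{m_{0}1}\Rightarrow\ldots)$ accumulated from earlier applications of the black residuation rules. This strengthening is forced because the white residuation rules for $\Box$ and $\Diamond$ push $(\Diamond^{S'})^{-1}$ or $(\Box^{S})^{-1}$ onto the non-residuated side after a single step, so after one reduction the side we are tracking is no longer a bare nominal.

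In the base case $\alpha$ is a propositional variable, a nominal, or a constant, and the mega-inequality already sits in form~1 (if $\alpha=p$ with $\epsilon(p)=1$) or in form~3 (otherwise, with $+\alpha$ either literally or vacuously $\epsilon^\partial$-uniform). For the inductive step, if $+\alpha$ is $\epsilon^\partial$-uniform then the mega-inequality is already of form~3 and nothing else needs to be done. Otherwise the root connective of $\alpha$ lies on at least one $\epsilon$-critical branch and, by the inner-$\epsilon$-Sahlqvist hypothesis, must be one of the positive inner nodes $+\wedge, +\Box, +\blacksquare, +\neg$. I would dispatch the four sub-cases with the matching Substage~2 rule: the splitting rule on $+\wedge$ (any $\epsilon^\partial$-uniform conjunct directly produces form~3 and the critical conjunct is handed to the IH), the $\Box$-residuation (which puts $(\Diamond^{S'})^{-1}\sigma$ on the left, still pure and absolute, hence satisfying the strengthened hypothesis), the $\blacksquare$-residuation (which appends a fresh layer $\forall\nomi_{m_{0}0}\forall\nomi_{m_{0}1}(\nomi_{m_{0}0}\leq^{S'}_{S'}\Diamond^{S'}\nomi_{m_{0}1}\Rightarrow\cdot)$ to the mega-prefix, adjoins $(\nomi_{m_{0}0},\nomi_{m_{0}1})$ to the subscript, and passes a strictly smaller inner inequality to the IH), and the $\neg$-residuation (which flips sides and reduces to the dual statement with $-\alpha_1$ inner $\epsilon$-Sahlqvist). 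The treatment of $\beta\leq^{S}_{S'}\neg\nomi$ is exactly dual, using the inner negative nodes $-\lor,-\Diamond,-\Diamondblack,-\neg$ with the corresponding rules, and the two inductive cases close together through the $\neg$-residuation step.

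The main obstacle I anticipate is the bookkeeping around the $+\blacksquare$ and $-\Diamondblack$ reductions, which simultaneously introduce a nested universal quantifier and extend the subscript/superscript context with a fresh nominal pair. Two subtleties arise here. First, once one is inside such a quantifier layer, applying a splitting rule produces a meta-conjunction of mega-inequalities, and the second splitting rule of Substage~2 must be invoked to separate them, so that each output really does conform to the single quantifier-prefix template displayed in forms~1--4. Second, the contexts $S$ and $S'$ have to propagate through the nested layers and remain aligned with the $S_{0},\ldots,S_{k}$ slots of the template, which calls for an auxiliary, notationally heavy induction on the depth of black-modality nestings. With this bookkeeping discharged, the main induction closes and every resulting mega-inequality falls into one of the four advertised forms.
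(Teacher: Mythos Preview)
Your proposal is correct and follows essentially the same approach as the paper: both arguments maintain the invariant that after each Substage~2 rule application the head of the mega-inequality has one side pure and free of contextual connectives while the other side remains inner $\epsilon$-Sahlqvist, terminating either at a critical $p$ or at an $\epsilon^{\partial}$-uniform formula. Your version is simply a more explicit inductive formalisation of the paper's sketch, and your strengthened induction hypothesis (allowing an arbitrary pure, contextual-free $\sigma$ on the non-residuated side) is exactly the invariant the paper invokes informally.
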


\begin{proof}
First of all, from the rules of the Substage 2 of Stage 2, it is easy to see that from the given inequality, what we will obtain would be a set of mega-inequalities, and by the second splitting rule those mega-inequalities are built up from inequalities by $\forall\nomi\forall\nomj(\nomi\leq^{S}_{S}\Diamond^{S}\nomj\Rightarrow \mathsf{Mega})$, so we will have mega-inequalities of the form $\forall\nomi_{m_{0}0}\forall\nomi_{m_{0}1}(\nomi_{m_{0}0}\leq^{S_{0}}_{S_{0}}\Diamond^{S_{0}}\nomi_{m_{0}1}\Rightarrow\ldots\forall\nomi_{m_{k}0}\forall\nomi_{m_{k}1}(\nomi_{m_{k}0}\leq^{S_{k}}_{S_{k}}\Diamond^{S_{k}}\nomi_{m_{k}1}\Rightarrow\gamma\leq^{S}_{S'}\delta)\ldots)$. Now it suffices to check the shape of $\gamma$ and $\delta$. (From now on we call $\gamma\leq^{S}_{S'}\delta$ the \emph{head} of the mega-inequality.)

Notice that for each input inequality, it is of the form $\nomi\leq^{S}_{S'}\alpha$ or $\beta\leq^{S}_{S'}\neg\nomi$, where $+\alpha$ and $-\beta$ are inner $\epsilon$-Sahlqvist. By applying the splitting rules and the residuation rules in this substage, it is easy to see that the head of the (mega-)inequality will have one side of the inequality pure and have no contextual connectives $\Box, \Diamond, \blacksquare, \Diamondblack$, and the other side still inner $\epsilon$-Sahlqvist. By applying these rules exhaustively, one will either have $p$ as the non-pure side (with this $p$ on a critical branch), or have an inner $\epsilon$-Sahlqvist signed generation tree with no critical branch, i.e.,\ $\epsilon^{\partial}$-uniform.
\end{proof}

\begin{lemma}\label{Lemma:Substage:2:3}
Assume we have ($mega$-)inequalities of the form as described in Lemma \ref{Lemma:Substage:2:2}. Then we can get inequalities of the following form:

\begin{enumerate}
\item $\alpha\leq^{\emptyset}_{\emptyset}p$ where $\epsilon(p)=1$, $\alpha$ is pure and do not contain contextual connectives $\Box,\Diamond,\blacksquare,\Diamondblack$;
\item $p\leq^{\emptyset}_{\emptyset}\alpha$ where $\epsilon(p)=\partial$, $\alpha$ is pure and do not contain contextual connectives $\Box,\Diamond,\blacksquare,\Diamondblack$;
\item $\forall\nomi_1\ldots\forall\nomi_n(\top\leq^{\emptyset}_{S}\gamma)$ where $+\gamma$ is $\epsilon^{\partial}$-uniform.
\end{enumerate} 
\end{lemma}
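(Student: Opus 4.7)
The plan is to apply the packing rules of Substage 3 to each of the four shapes of mega-inequalities listed in Lemma \ref{Lemma:Substage:2:2}, turning each one into a (universally quantified) inequality matching one of the three target forms of the present lemma. Since Lemma \ref{Lemma:Substage:2:2} gives an exhaustive list of the shapes produced by Substage 2, it suffices to treat the four cases one by one and match each with a packing rule.

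For case 1 of Lemma \ref{Lemma:Substage:2:2}, the head of the mega-inequality is $\alpha\leq^{S}_{S'}p$ with $\epsilon(p)=1$ and $\alpha$ pure and free of contextual connectives $\Box,\Diamond,\blacksquare,\Diamondblack$. This is exactly the shape to which the first packing rule in Substage 3 applies: it absorbs the nested $\forall\nomi_{m_{j}0}\forall\nomi_{m_{j}1}(\nomi_{m_{j}0}\leq^{S_{j}}_{S_{j}}\Diamond^{S_{j}}\nomi_{m_{j}1}\Rightarrow\ldots)$ prefixes into a single existential block of the form $\exists\nomi_{m_{0}0}\exists\nomi_{m_{0}1}\ldots\exists\nomi_{m_{k}0}\exists\nomi_{m_{k}1}\bigl(A(\nomi_{m_{0}0}\to\Diamond^{S_{0}}\nomi_{m_{0}1})\land\ldots\land A(\nomi_{m_{k}0}\to\Diamond^{S_{k}}\nomi_{m_{k}1})\land\alpha\bigr)\leq^{\emptyset}_{\emptyset}p$, whose left-hand side remains pure and free of contextual modalities. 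This fits target form 1. Case 2 is treated symmetrically via the second packing rule, delivering target form 2. For cases 3 and 4, where the head has the shape $\alpha\leq^{S}_{S'}\gamma$ or $\gamma\leq^{S}_{S'}\beta$ with $\alpha,\beta$ pure and contextual-modality-free and $\gamma$ being $\epsilon^{\partial}$-uniform, the third and fourth packing rules respectively rewrite the mega-inequality as a universally quantified inequality $\forall\nomi_{m_{0}0}\forall\nomi_{m_{0}1}\ldots\forall\nomi_{m_{k}0}\forall\nomi_{m_{k}1}(\top\leq^{\emptyset}_{S'}A(\cdots)\land\ldots\land\alpha\to\gamma)$, or its companion with $\gamma,\alpha$ swapped and $S$ in place of $S'$. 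In both subcases the body, viewed as a formula on the right-hand side of $\top\leq$, is built from the $\epsilon^{\partial}$-uniform $\gamma$ together with $A(\cdots)$ atoms and the pure $\alpha$ (or $\beta$), hence its signed generation tree is still $\epsilon^{\partial}$-uniform, matching target form 3. The edge case $k=0$ (no quantifier prefix) is handled identically, with the packing rule degenerating into the trivial rewriting $\alpha\leq^{S}_{S'}p \rightsquigarrow \alpha\leq^{\emptyset}_{\emptyset}p$ when the contexts are already empty, or feeding directly into form 3 otherwise.

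The main obstacle is bookkeeping: one has to check that the side conditions demanded by each packing rule (that the non-$p$/non-$\gamma$ side is pure and contains no contextual $\Box,\Diamond,\blacksquare,\Diamondblack$) are exactly the side conditions supplied by the corresponding case of Lemma \ref{Lemma:Substage:2:2}, and that the superscript/subscript contexts $S_{0},\ldots,S_{k},S,S'$ are threaded correctly from the premises of the packing rules to their conclusions. Once this matching is verified for one case, the remaining cases follow by the obvious duality between the left-handed and right-handed packing rules, so no further creative step is required beyond the enumeration above.
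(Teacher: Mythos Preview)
Your proposal is correct and follows essentially the same approach as the paper: the paper's own proof is a single sentence observing that each of the four mega-inequality shapes from Lemma~\ref{Lemma:Substage:2:2} matches the premise of one of the four packing rules, so applying the corresponding rule yields the desired forms. Your write-up simply spells out this case analysis in more detail, including the verification that the formulas produced by the third and fourth packing rules remain $\epsilon^{\partial}$-uniform (since the added $A(\cdots)$ conjuncts and the pure $\alpha$ or $\beta$ contribute no propositional variables).
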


\begin{proof}
From the shape of the mega-inequalities, we can see that for all the mega-inequalities we can apply the corresponding packing rules so that we can get the inequalities as described in the lemma.
\end{proof}

\begin{lemma}\label{Lemma:Substage:2:4}
Assume we have inequalities of the form as described in Lemma \ref{Lemma:Substage:2:3}, the Ackermann lemmas are applicable and therefore all propositional variables can be eliminated.
\end{lemma}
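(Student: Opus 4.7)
The strategy is to eliminate the propositional variables one at a time, applying either the right-handed or the left-handed Ackermann rule according to the order-type $\epsilon$. Enumerate the variables $p_1, \ldots, p_n$ and process them in succession.

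Fix $p_j$ and consider the current system. By Lemma \ref{Lemma:Substage:2:3} every inequality has one of the three advertised forms. Suppose first that $\epsilon(p_j)=1$. The inequalities in which $p_j$ occurs critically are precisely those of form 1 for $p_j$, say $\alpha_1 \leq^{\emptyset}_{\emptyset} p_j, \ldots, \alpha_\ell \leq^{\emptyset}_{\emptyset} p_j$, with each $\alpha_r$ pure and free of contextual modalities. Every remaining inequality is either of form 3 — in which case every occurrence of $p_j$ in the head $\gamma$ has sign $-$, because $\epsilon^{\partial}(p_j) = \partial$ by the $\epsilon^{\partial}$-uniformity condition — or of form 1 or 2 for a different variable $q$, in which case $p_j$ does not occur at all since the non-critical side is a pure formula. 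Viewing $\top$ as trivially positive in $p_j$, the side conditions of the right-handed Ackermann rule are therefore satisfied, and $p_j$ is replaced throughout by $\alpha_1 \lor \cdots \lor \alpha_\ell$. The case $\epsilon(p_j)=\partial$ is dual, using the form 2 inequalities for $p_j$ and the left-handed Ackermann rule with substituent $\alpha_1 \land \cdots \land \alpha_\ell$.

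After a single elimination step, the form 1 (or form 2) inequalities used for $p_j$ are discarded, and every surviving inequality has $p_j$ substituted by a pure formula without contextual modalities. The shape prescribed by Lemma \ref{Lemma:Substage:2:3} is therefore preserved with respect to the remaining variables: form 1 and form 2 inequalities for $q \ne p_j$ are untouched because $p_j$ never occurred in them, while form 3 inequalities remain $\epsilon^{\partial}$-uniform in every surviving variable, since substituting a pure formula for $p_j$ cannot alter the polarity of any other propositional variable. Iterating the argument removes $p_1, \ldots, p_n$ in turn and yields a purely nominal quasi-(universally quantified) inequality.

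The only genuine verification is the polarity clause of the Ackermann rule applied to form 3 inequalities, which reduces to unfolding $\epsilon^{\partial}$-uniformity: $\epsilon^{\partial}(p_j) = \partial$ exactly when $\epsilon(p_j) = 1$, forcing every occurrence of $p_j$ in $+\gamma$ to have sign $-$, and symmetrically in the dual case. Everything else is bookkeeping on the shapes listed in Lemma \ref{Lemma:Substage:2:3}, together with the observation that the substituents introduced by Ackermann contain no contextual modalities and no propositional variables, so they cannot disturb either the critical/non-critical status of subsequent variables or the condition ``contains no contextual connectives'' required to reapply the rule downstream.
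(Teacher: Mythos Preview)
Your argument is correct and follows the same idea as the paper: verify that the three shapes produced by Lemma~\ref{Lemma:Substage:2:3} meet the side conditions of the Ackermann rules. The paper's own proof is a single line (``Immediate observation from the requirements of the Ackermann lemmas''), so you have in fact supplied the details the paper omits --- in particular the explicit polarity check via $\epsilon^{\partial}$-uniformity and the invariant that the three shapes are preserved after each elimination step, which justifies iterating over all propositional variables.
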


\begin{proof}
Immediate observation from the requirements of the Ackermann lemmas.
\end{proof}

\begin{proof}[Proof of Theorem \ref{Thm:Success}]
Assume we have an $\epsilon$-Sahlqvist inequality $\phi\leq\psi$ as input. By Lemma \ref{Lemma:Stage:1}, we get a set of definite  $\epsilon$-Sahlqvist inequalities. Then by Lemma \ref{Lemma:Substage:2:1}, we get inequalities as described in Lemma \ref{Lemma:Substage:2:1}. By Lemma \ref{Lemma:Substage:2:2}, we get the mega-inequalities as described. Therefore by Lemma \ref{Lemma:Substage:2:3}, we can apply the packing rules to get inequalities and universally quantified inequalities as described in the lemma. Finally by Lemma \ref{Lemma:Substage:2:4}, the (universally quantified) inequalities are in the right shape to apply the Ackermann rules, and thus we can eliminate all the propositional variables and the algorithm succeeds on the input.
\end{proof}

\section{Discussions and further directions}\label{Sec:discussion}
\subsection{Completeness and canonicity issues}

One issue that is not treated in the present paper is the completeness and canonicity theory of sabotage modal logic. For completeness theory, as is discussed in \cite{AuvBGr18}, there is not yet natural Hilbert-style axiomatization system for the basic sabotage modal logic, and there is no canonical model for sabotage modal logic, so it is not yet ready to provide a Sahlqvist-type completeness theorem for the current state-of-art. Regarding canonicity theory, one possible approach is to reduce the canonicity problem for formulas in sabotage modal logic to canonicity problems for formulas in hybrid logic with binders, since sabotage modal logic is a fragment of hybrid logic with binders (for more details, see \cite{ArFeHoMa16}), for which the Sahlqvist theory is still in development (see \cite{Zh21}). For the algebraic approach of canonicity, one needs to first define the algebraic semantics for sabotage modal logic.

\subsection{Other questions}

Other interesting questions include the following:

\begin{itemize}
\item Extending the Sahlqvist sabotage formulas to inductive sabotage formulas as well as to the language of sabotage modal logic with fixpoint operators;
\item A Kracht-type theorem characterizing the first-order correspondents of Sahlqvist sabotage formulas; 
\item A Goldblatt-Thomason-type theorem characterizing the sabotage modally definable classes of Kripke frames;
\item Extend results on sabotage modal logic to the class of relation changing modal logics \cite{ArFeHoMa16};
\item Since sabotage modal logic can be treated as a fragment of very expressive hybrid logics (see \cite{ArFeHoMa16}), therefore a relevant interesting question would be to have a Sahlqvist-type correspondence theory for very expressive hybrid logics. For other existing works of correspondence theory for hybrid logic, see \cite{Co09,CoGoVa06b,ConRob,GoVa00,Ta05,tCMaVi06}.
\end{itemize}

\paragraph{Acknowledgement} The research of the author is supported by Taishan University Starting Grant ``Studies on Algebraic Sahlqvist Theory'' and the Taishan Young Scholars Program of the Government of Shandong Province, China (No.tsqn201909151).

\bibliographystyle{abbrv}
\bibliography{Sabotage}
\end{document}